\newtheorem{theorem}{Theorem}[section]
\newtheorem{lemma}[theorem]{Lemma}
\newtheorem{corollary}[theorem]{Corollary}
\newtheorem{conjecture}[theorem]{Conjecture}
\title{Maximal matchings in polyspiro and benzenoid chains}
\author{Tomislav Do\v{s}li\'{c}\thanks{This work has been supported in part by Croatian Science Foundation under
the project 8481 (BioAmpMode).}\\Faculty of Civil Engineering\\University of Zagreb\\doslic@grad.hr
	\and 
	Taylor Short\thanks{This work was partially supported by a SPARC Graduate Research Grant from the Office of the Vice President for Research at the University of South Carolina and also supported in part by the NSF DMS under contract 1300547.}\\Department of Mathematics\\University of South Carolina\\shorttm2@mailbox.sc.edu}
\date{}
\begin{document}
\maketitle

\begin{abstract}
A matching $M$ of a graph $G$ is maximal if it is not a proper subset of any other
matching in $G$. Maximal matchings are much less known and researched than
their maximum and perfect counterparts. In particular, almost nothing is known about
their enumerative properties. In this paper
we present the recurrences and generating functions for the sequences 
enumerating maximal matchings in two classes of chemically interesting 
linear polymers: polyspiro 
chains and benzenoid chains. We also analyze the asymptotic behavior of those 
sequences and determine the extremal cases.

{\bf Keywords:} maximal matching; benzenoid chain; polyspiro chain
\end{abstract}

\section{Introduction}

A matching in a graph is a collection of its edges such that no two edges
in this collection have a vertex in common.
Matchings in graphs serve as successful models of many phenomena in engineering,
natural and social sciences. A strong initial impetus to their 
study came from the chemistry of benzenoid compounds after it was observed 
that the stability of benzenoid compounds is related to the existence and
the number of 
perfect matchings in the corresponding graphs. That observation gave rise to a 
number of enumerative results that were accumulated over the course of several
decades; we refer the reader to monograph \cite{cyvingut} for a survey. 
Further motivation came from
the statistical mechanics {\em via} the Kasteleyn's solution of the dimer 
problem \cite{kasteleyn1, kasteleyn2} and its applications to evaluations of partition
functions for a given value of temperature. In both cases, the matchings under
consideration are perfect, i.e., their edges are collectively incident to
all vertices of $G$. It is clear that perfect matchings are as large as
possible and that no other matching in $G$ can be ``larger'' than a perfect
one. It turns out that in all other applications we are also interested 
mostly in large matchings.

Basically, there are two ways to quantify the largeness of a matching. One way,
by using the number of edges, gives rise to the idea of {\em maximum 
matchings}. Maximum matchings are well researched and well understood; there is
a well developed structural theory and enumerative results are abundant. The
classical monograph by L\'ovasz and Plummer \cite{lovasz} is an excellent
reference for all aspects of the theory.

An alternative way is to say that a matching is large if no other matching
contains it as a proper subset; this gives rise to the concept of 
{\em maximal matchings}. Every maximum matching is also maximal, but the
opposite is usually not true. Unlike their maximum counterparts, maximal 
matchings can have different cardinalities (unless the graph is 
{\em equimatchable}; see \cite{frendrup}) and the recurrences used for
their enumeration are essentially non-local. As a consequence, maximal matchings
are much less understood then the maximum ones. There is nothing analogous 
to the structural theory of maximum matchings and the enumerative results 
are scarce and scattered through the literature \cite{doszub2,klazar,wagner}.

In spite of their obscurity, maximal matchings are natural models for several
problems connected with adsorption of dimers on a structured substrate and
block-allocation of a sequential resource. One can find them also in the 
context of polymerization of organic molecules, as witnessed by an early 
paper of Flory \cite{flory}. A probabilistic approach to the same problem
can be found in \cite{montroll}. We refer the reader to papers 
\cite{andova,doslicsat,doszub1,doszub2} for some structural and enumerative 
results on those models. In this paper our goal is to further the line of 
research of reference \cite{doszub2} by considering graphs with more 
complicated connectivity patterns and richer structure of basic units. 
We provide enumerative and extremal results on maximal matchings
in two classes of linear polymers of chemical interest: the polyspiro chains
and benzenoid chains. We extablish the recurrences and generating
functions for the enumerating sequences of maximal matchings in three classes
of uniform polyspiro chains and use the obtained results to determine the
asymptotic behavior and to find the extremal chains. Further, we also
enumerate maximal matchings in three classes of benzenoid chains and show
that one of them is extremal with respect to the number of maximal matchings.
Our results show that maximal matchings behave in a radically different
way that the perfect matchings; chains rich in maximal matchings are poor
in perfect matchings and vice versa. We end by comparing our results with 
enumerative results for other type of structures in similar polymers and
by discussing some possible directions of future research.

\section{Preliminaries}

Our terminology and notations are mostly standard and taken from
\cite{lovasz,west}. All graphs $G$ considered in this paper will be finite and simple, with vertex set $V(G)$ and edge set $E(G)$. For a subset of vertices $S$ of $V(G)$, we make use of the notation $G - S$ (or $G-v$ if $S=\{v\}$) to denote the subgraph of $G$ obtained by deleting the vertices of $S$ and all edges incident to them. For a graph $G$ and subset of edges $X$ of $G$, we use the notation $G\setminus X$ (or $G\setminus e$ if $X=\{e\}$) to denote the subgraph of $G$ obtained by deleting the endpoints of the edges in $X$ as well as all incident edges to these endpoints. 

A \emph{matching} $M$ in G is a set of edges of $G$ such that no two edges from $M$ have a vertex in common. The number of edges in $M$ is called its \emph{size}. A matching in $G$ with the largest possible size is called a \emph{maximum matching}. If a matching in $G$ is not a subset of a larger matching of $G$, it is called a \emph{maximal matching}. Let $\Psi(G)$ denote the number of maximal matchings of $G$.

In this paper we are mainly concerned with counting maximal matchings in two classes of linear polymers (or {\em facsiagraphs}, \cite{juvan}) with simple connectivity patterns. The first class are $6$-uniform cactus chains. Chain cacti are in chemical literature known as {\em polyspiro chains}.

A \emph{cactus graph} is a connected graph in which no edge lies in more than one cycle. Consequently, each block of a cactus graph is either an edge or a cycle. If all blocks of a cactus $G$ are cycles of the same length $m$, the cactus is \emph{$m$-uniform}. 

A \emph{hexagonal cactus} is a 6-uniform cactus, i.e., a cactus in which every block is a hexagon. A vertex shared by two or more hexagons is called a \emph{cut-vertex}. If each hexagon of a hexagonal cactus $G$ has at most two cut-vertices, and each cut-vertex is shared by exactly two hexagons, we say that $G$ is a \emph{chain hexagonal cactus}. The number of hexagons is called the \emph{length} of the chain. An example of a chain hexagonal cactus is shown in Figure \ref{excactus}. 

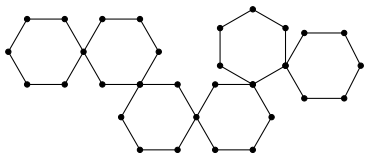
\begin{figure}[h]
\begin{center}
\begin{tikzpicture}[scale=.5]
\draw (-.5,.87) circle (2pt) [fill=black];
\draw (.5,.87) circle (2pt) [fill=black];
\draw (.5,-.87) circle (2pt) [fill=black];
\draw (-.5,-.87) circle (2pt) [fill=black];
\draw (1,0) circle (2pt) [fill=black];
\draw (-1,0) circle (2pt) [fill=black];
\draw (1,0)--(.5,.87);
\draw (-.5,.87)--(.5,.87);
\draw (-.5,.87)--(-1,0);
\draw (-.5,-.87)--(-1,0);
\draw (-.5,-.87)--(.5,-.87);
\draw (1,0)--(.5,-.87);

\draw (1.5,.87) circle (2pt) [fill=black];
\draw (2.5,.87) circle (2pt) [fill=black];
\draw (2.5,-.87) circle (2pt) [fill=black];
\draw (1.5,-.87) circle (2pt) [fill=black];
\draw (3,0) circle (2pt) [fill=black];
\draw (3,0)--(2.5,.87);
\draw (1.5,.87)--(2.5,.87);
\draw (1.5,.87)--(1,0);
\draw (1.5,-.87)--(1,0);
\draw (1.5,-.87)--(2.5,-.87);
\draw (3,0)--(2.5,-.87);

\draw (2.5,-.87) circle (2pt) [fill=black];
\draw (3.5,-.87) circle (2pt) [fill=black];
\draw (3.5,-2.61) circle (2pt) [fill=black];
\draw (2.5,-2.61) circle (2pt) [fill=black];
\draw (4,-1.74) circle (2pt) [fill=black];
\draw (2,-1.74) circle (2pt) [fill=black];
\draw (4,-1.74)--(3.5,-.87);
\draw (2.5,-.87)--(3.5,-.87);
\draw (2.5,-.87)--(2,-1.74);
\draw (2.5,-2.61)--(2,-1.74);
\draw (2.5,-2.61)--(3.5,-2.61);
\draw (4,-1.74)--(3.5,-2.61);

\draw (4.5,-.87) circle (2pt) [fill=black];
\draw (5.5,-.87) circle (2pt) [fill=black];
\draw (5.5,-2.61) circle (2pt) [fill=black];
\draw (4.5,-2.61) circle (2pt) [fill=black];
\draw (6,-1.74) circle (2pt) [fill=black];
\draw (6,-1.74)--(5.5,-.87);
\draw (4.5,-.87)--(5.5,-.87);
\draw (4.5,-.87)--(4,-1.74);
\draw (4.5,-2.61)--(4,-1.74);
\draw (4.5,-2.61)--(5.5,-2.61);
\draw (6,-1.74)--(5.5,-2.61);

\draw (5.5,-.87) circle (2pt) [fill=black];
\draw (5.5,1.13) circle (2pt) [fill=black];
\draw (4.63,-.37) circle (2pt) [fill=black];
\draw (4.63,.63) circle (2pt) [fill=black];
\draw (6.37,-.37) circle (2pt) [fill=black];
\draw (6.37,.63) circle (2pt) [fill=black];
\draw (5.5,-.87)--(4.63,-.37);
\draw (5.5,-.87)--(6.37,-.37);
\draw (6.37,-.37)--(6.37,.63);
\draw (6.37,.63)--(5.5,1.13);
\draw (5.5,1.13)--(4.63,.63);
\draw (4.63,.63)--(4.63,-.37);

\draw (6.37,-.37) circle (2pt) [fill=black];
\draw (8.37,-.37) circle (2pt) [fill=black];
\draw (6.87,.5) circle (2pt) [fill=black];
\draw (7.93,.5) circle (2pt) [fill=black];
\draw (6.87,-1.24) circle (2pt) [fill=black];
\draw (7.93,-1.24) circle (2pt) [fill=black];
\draw (6.37,-.37)--(6.87,.5);
\draw (6.87,.5)--(7.93,.5);
\draw (7.93,.5)--(8.37,-.37);
\draw (8.37,-.37)--(7.93,-1.24);
\draw (7.93,-1.24)--(6.87,-1.24);
\draw (6.87,-1.24)--(6.37,-.37);

\end{tikzpicture}
\end{center}
\caption{A chain hexagonal cactus of length 6.} \label{excactus}
\end{figure}

Furthermore, any chain hexagonal cactus of length greater than one has exactly two hexagons with only one cut-vertex; such hexagons are called \emph{terminal} and all other hexagons with two cut-vertices are called \emph{internal}. 

Internal hexagons can be one of three types depending upon the distance between its cut-vertices: in an \emph{ortho-hexagon} cut vertices are adjacent, in a \emph{meta-hexagon} they are at distance two, and in a \emph{para-hexagon} cut-vertices are at distance three. The terminology is borrowed from the theory of benzenoid hydrocarbons; see \cite{doslicmaloy,doszub1,doszub2} for more details. These give rise to the following three types of hexagonal cactus chains of length $n$: the unique chain whose internal hexagons are all para-hexagons is $P_n$, the unique chain whose internal hexagons are all meta-hexagons is $M_n$, and the unique chain whose internal hexagons are all ortho-hexagons is $O_n$.  

\begin{figure}[h]
\begin{center}
\begin{tikzpicture}[scale=.5]
\draw (-2.5,0) node {$P_n$};
\draw (-.5,.87) circle (2pt) [fill=black];
\draw (.5,.87) circle (2pt) [fill=black];
\draw (.5,-.87) circle (2pt) [fill=black];
\draw (-.5,-.87) circle (2pt) [fill=black];
\draw (1,0) circle (2pt) [fill=black];
\draw (-1,0) circle (2pt) [fill=black];
\draw (1,0)--(.5,.87);
\draw (-.5,.87)--(.5,.87);
\draw (-.5,.87)--(-1,0);
\draw (-.5,-.87)--(-1,0);
\draw (-.5,-.87)--(.5,-.87);
\draw (1,0)--(.5,-.87);

\draw (1.5,.87) circle (2pt) [fill=black];
\draw (2.5,.87) circle (2pt) [fill=black];
\draw (2.5,-.87) circle (2pt) [fill=black];
\draw (1.5,-.87) circle (2pt) [fill=black];
\draw (3,0) circle (2pt) [fill=black];
\draw (3,0)--(2.5,.87);
\draw (1.5,.87)--(2.5,.87);
\draw (1.5,.87)--(1,0);
\draw (1.5,-.87)--(1,0);
\draw (1.5,-.87)--(2.5,-.87);
\draw (3,0)--(2.5,-.87);

\draw (3.5,.87) circle (2pt) [fill=black];
\draw (4.5,.87) circle (2pt) [fill=black];
\draw (4.5,-.87) circle (2pt) [fill=black];
\draw (3.5,-.87) circle (2pt) [fill=black];
\draw (5,0) circle (2pt) [fill=black];
\draw (5,0)--(4.5,.87);
\draw (3.5,.87)--(4.5,.87);
\draw (3.5,.87)--(3,0);
\draw (3.5,-.87)--(3,0);
\draw (3.5,-.87)--(4.5,-.87);
\draw (5,0)--(4.5,-.87);

\draw (5.5,.87) circle (2pt) [fill=black];
\draw (6.5,.87) circle (2pt) [fill=black];
\draw (6.5,-.87) circle (2pt) [fill=black];
\draw (5.5,-.87) circle (2pt) [fill=black];
\draw (7,0) circle (2pt) [fill=black];
\draw (7,0)--(6.5,.87);
\draw (5.5,.87)--(6.5,.87);
\draw (5.5,.87)--(5,0);
\draw (5.5,-.87)--(5,0);
\draw (5.5,-.87)--(6.5,-.87);
\draw (7,0)--(6.5,-.87);

\draw (7.5,.87) circle (2pt) [fill=black];
\draw (8.5,.87) circle (2pt) [fill=black];
\draw (8.5,-.87) circle (2pt) [fill=black];
\draw (7.5,-.87) circle (2pt) [fill=black];
\draw (9,0) circle (2pt) [fill=black];
\draw (9,0)--(8.5,.87);
\draw (7.5,.87)--(8.5,.87);
\draw (7.5,.87)--(7,0);
\draw (7.5,-.87)--(7,0);
\draw (7.5,-.87)--(8.5,-.87);
\draw (9,0)--(8.5,-.87);

\draw (0,0) node {$1$};
\draw (2,0) node {$2$};
\draw (4,0) node {$\cdots$};
\draw (8,0) node {$n$};
\end{tikzpicture}

\vspace{.4cm}

\begin{tikzpicture}[scale=.5]
\draw (-2.5,0) node {$M_n$};
\draw (.87,-.5) circle (2pt) [fill=black];
\draw (.87,.5) circle (2pt) [fill=black];
\draw (-.87,.5) circle (2pt) [fill=black];
\draw (-.87,-.5) circle (2pt) [fill=black];
\draw (0,1) circle (2pt) [fill=black];
\draw (0,-1) circle (2pt) [fill=black];
\draw (0,1)--(.87,.5);
\draw (.87,-.5)--(.87,.5);
\draw (.87,-.5)--(0,-1);
\draw (-.87,-.5)--(0,-1);
\draw (-.87,-.5)--(-.87,.5);
\draw (0,1)--(-.87,.5);

\draw (2.61,-.5) circle (2pt) [fill=black];
\draw (.87,-1.5) circle (2pt) [fill=black];
\draw (2.61,-1.5) circle (2pt) [fill=black];
\draw (1.74,0) circle (2pt) [fill=black];
\draw (1.74,-2) circle (2pt) [fill=black];
\draw (.87,-1.5)--(.87,-.5);
\draw (2.61,-1.5)--(1.74,-2);
\draw (2.61,-1.5)--(2.61,-.5);
\draw (1.74,0)--(2.61,-.5);
\draw (1.74,0)--(.87,-.5);
\draw (1.74,-2)--(.87,-1.5);

\draw (4.35,.5) circle (2pt) [fill=black];
\draw (4.35,-.5) circle (2pt) [fill=black];
\draw (3.48,1) circle (2pt) [fill=black];
\draw (3.48,-1) circle (2pt) [fill=black];
\draw (2.61,.5) circle (2pt) [fill=black];
\draw (2.61,.5)--(2.61,-1);
\draw (4.35,-.5)--(3.48,-1);
\draw (4.35,-.5)--(4.35,.5);
\draw (3.48,1)--(4.35,.5);
\draw (3.48,1)--(2.61,.5);
\draw (3.48,-1)--(2.61,-.5);

\draw (6.09,-.5) circle (2pt) [fill=black];
\draw (6.09,-1.5) circle (2pt) [fill=black];
\draw (5.22,0) circle (2pt) [fill=black];
\draw (5.22,-2) circle (2pt) [fill=black];
\draw (4.35,-1.5) circle (2pt) [fill=black];
\draw (4.35,-1.5)--(4.35,-.5);
\draw (6.09,-1.5)--(5.22,-2);
\draw (6.09,-1.5)--(6.09,-.5);
\draw (5.22,0)--(6.09,-.5);
\draw (5.22,0)--(4.35,-.5);
\draw (5.22,-2)--(4.35,-1.5);

\draw (7.83,.5) circle (2pt) [fill=black];
\draw (7.83,-.5) circle (2pt) [fill=black];
\draw (6.96,1) circle (2pt) [fill=black];
\draw (6.96,-1) circle (2pt) [fill=black];
\draw (6.09,.5) circle (2pt) [fill=black];
\draw (6.09,.5)--(6.09,-.5);
\draw (7.83,-.5)--(6.96,-1);
\draw (7.83,-.5)--(7.83,.5);
\draw (6.96,1)--(7.83,.5);
\draw (6.96,1)--(6.09,.5);
\draw (6.96,-1)--(6.09,-.5);

\draw (0,0) node {$1$};
\draw (1.74,-1) node {$2$};
\draw (3.48,0) node {$\cdots$};
\draw (6.96,0) node {$n$};
\end{tikzpicture}

\vspace{.4cm}

\begin{tikzpicture}[scale=.5]
\draw (-2.5,0) node {$O_n$};
\draw (-.7,.87) circle (2pt) [fill=black];
\draw (.7,.87) circle (2pt) [fill=black];
\draw (.7,-.87) circle (2pt) [fill=black];
\draw (-.7,-.87) circle (2pt) [fill=black];
\draw (1,0) circle (2pt) [fill=black];
\draw (-1,0) circle (2pt) [fill=black];
\draw (1,0)--(.7,.87);
\draw (-.7,.87)--(.7,.87);
\draw (-.7,.87)--(-1,0);
\draw (-.7,-.87)--(-1,0);
\draw (-.7,-.87)--(.7,-.87);
\draw (1,0)--(.7,-.87);

\draw (.7,-.87) circle (2pt) [fill=black];
\draw (2.1,-.87) circle (2pt) [fill=black];
\draw (2.1,-2.61) circle (2pt) [fill=black];
\draw (.7,-2.61) circle (2pt) [fill=black];
\draw (2.4,-1.74) circle (2pt) [fill=black];
\draw (.4,-1.74) circle (2pt) [fill=black];
\draw (2.4,-1.74)--(2.1,-.87);
\draw (.7,-.87)--(2.1,-.87);
\draw (.7,-.87)--(.4,-1.74);
\draw (.7,-2.61)--(.4,-1.74);
\draw (.7,-2.61)--(2.1,-2.61);
\draw (2.4,-1.74)--(2.1,-2.61);

\draw (2.1,.87) circle (2pt) [fill=black];
\draw (3.5,.87) circle (2pt) [fill=black];
\draw (3.5,-.87) circle (2pt) [fill=black];
\draw (2.1,-.87) circle (2pt) [fill=black];
\draw (3.8,0) circle (2pt) [fill=black];
\draw (1.8,0) circle (2pt) [fill=black];
\draw (3.8,0)--(3.5,.87);
\draw (2.1,.87)--(3.5,.87);
\draw (2.1,.87)--(1.8,0);
\draw (2.1,-.87)--(1.8,0);
\draw (2.1,-.87)--(3.5,-.87);
\draw (3.8,0)--(3.5,-.87);

\draw (3.5,-.87) circle (2pt) [fill=black];
\draw (4.9,-.87) circle (2pt) [fill=black];
\draw (4.9,-2.61) circle (2pt) [fill=black];
\draw (3.5,-2.61) circle (2pt) [fill=black];
\draw (5.2,-1.74) circle (2pt) [fill=black];
\draw (3.2,-1.74) circle (2pt) [fill=black];
\draw (5.2,-1.74)--(4.9,-.87);
\draw (3.5,-.87)--(4.9,-.87);
\draw (3.5,-.87)--(3.2,-1.74);
\draw (3.5,-2.61)--(3.2,-1.74);
\draw (3.5,-2.61)--(4.9,-2.61);
\draw (5.2,-1.74)--(4.9,-2.61);

\draw (4.9,.87) circle (2pt) [fill=black];
\draw (6.3,.87) circle (2pt) [fill=black];
\draw (6.3,-.87) circle (2pt) [fill=black];
\draw (4.9,-.87) circle (2pt) [fill=black];
\draw (6.6,0) circle (2pt) [fill=black];
\draw (4.6,0) circle (2pt) [fill=black];
\draw (6.6,0)--(6.3,.87);
\draw (4.9,.87)--(6.3,.87);
\draw (4.9,.87)--(4.6,0);
\draw (4.9,-.87)--(4.6,0);
\draw (4.9,-.87)--(6.3,-.87);
\draw (6.6,0)--(6.3,-.87);

\draw (0,0) node {$1$};
\draw (1.4,-1.74) node {$2$};
\draw (2.8,0) node {$\cdots$};
\draw (5.6,0) node {$n$};
\end{tikzpicture}

\end{center}
\caption{The hexagonal cactus chains $P_n$, $M_n$, and $O_n$.} \label{}
\end{figure}
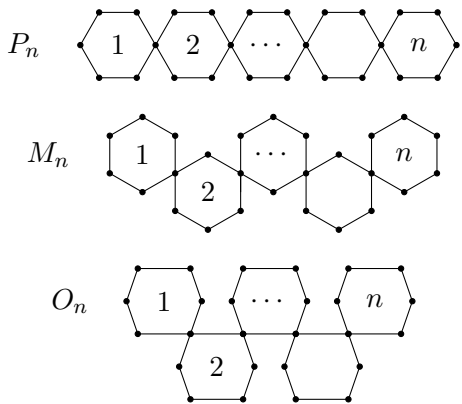

The other class of unbranched polymers we consider are benzenoid chains. A \emph{benzenoid system} is a is a connected, plane graph without cut-vertices in which all faces, except the unbounded one, are hexagons. Two hexagonal faces are either disjoint or they share exactly one common edge (adjacent hexagons). A vertex of a benzenoid system belongs to at most three hexagonal faces and the benzenoid system is called \emph{catacondensed} if it does not posses such a vertex. If no hexagon in a catacondensed benzenoid is adjacent to three other hexagons, we say that the benzenoid is a \emph{chain} see Figure \ref{exbenz1}.

The number of hexagons in a benzenoid chain is called its \emph{length}. In each benzenoid chain there are exactly two hexagons adjacent to one other hexagon; those two hexagons are called \emph{terminal}, while any other hexagons are called \emph{interior}. An interior hexagon has two vertices of degree 2. If these two vertices are not adjacent, then hexagon is called \emph{straight}. If the two vertices are adjacent, then the hexagon is called \emph{kinky}.

\begin{figure}[h]
\begin{center}
\begin{tikzpicture}[scale=.5]
\draw (-.5,.87) circle (2pt) [fill=black];
\draw (.5,.87) circle (2pt) [fill=black];
\draw (.5,-.87) circle (2pt) [fill=black];
\draw (-.5,-.87) circle (2pt) [fill=black];
\draw (1,0) circle (2pt) [fill=black];
\draw (-1,0) circle (2pt) [fill=black];
\draw (1,0)--(.5,.87);
\draw (-.5,.87)--(.5,.87);
\draw (-.5,.87)--(-1,0);
\draw (-.5,-.87)--(-1,0);
\draw (-.5,-.87)--(.5,-.87);
\draw (1,0)--(.5,-.87);

\draw (2,0) circle (2pt) [fill=black];
\draw (2.5,-.87) circle (2pt) [fill=black];
\draw (1,-1.74) circle (2pt) [fill=black];
\draw (2,-1.74) circle (2pt) [fill=black];
\draw (1,0)--(2,0);
\draw (2.5,-.87)--(2,0);
\draw (2.5,-.87)--(2,-1.74);
\draw (1,-1.74)--(2,-1.74);
\draw (1,-1.74)--(.5,-.87);

\draw (2.5,.87) circle (2pt) [fill=black];
\draw (3.5,.87) circle (2pt) [fill=black];
\draw (3.5,-.87) circle (2pt) [fill=black];
\draw (4,0) circle (2pt) [fill=black];
\draw (4,0)--(3.5,.87);
\draw (2.5,.87)--(3.5,.87);
\draw (2.5,.87)--(2,0);
\draw (2.5,-.87)--(3.5,-.87);
\draw (4,0)--(3.5,-.87);

\draw (5,1.74) circle (2pt) [fill=black];
\draw (5.5,.87) circle (2pt) [fill=black];
\draw (4,1.74) circle (2pt) [fill=black];
\draw (5,0) circle (2pt) [fill=black];
\draw (4,1.74)--(5,1.74);
\draw (5.5,.87)--(5,1.74);
\draw (5.5,.87)--(5,0);
\draw (4,0)--(5,0);
\draw (4,1.74)--(3.5,.87);

\draw (5.5,2.61) circle (2pt) [fill=black];
\draw (6.5,2.61) circle (2pt) [fill=black];
\draw (6.5,.87) circle (2pt) [fill=black];
\draw (7,1.74) circle (2pt) [fill=black];
\draw (7,1.74)--(6.5,2.61);
\draw (5.5,2.61)--(6.5,2.61);
\draw (5.5,2.61)--(5,1.74);
\draw (5.5,.87)--(6.5,.87);
\draw (7,1.74)--(6.5,.87);

\draw (8,1.74) circle (2pt) [fill=black];
\draw (8.5,.87) circle (2pt) [fill=black];
\draw (7,1.74) circle (2pt) [fill=black];
\draw (8,0) circle (2pt) [fill=black];
\draw (7,0) circle (2pt) [fill=black];
\draw (7,1.74)--(8,1.74);
\draw (8.5,.87)--(8,1.74);
\draw (8.5,.87)--(8,0);
\draw (7,0)--(8,0);
\draw (7,0)--(6.5,.87);
\end{tikzpicture}

\end{center}
\caption{A benzenoid chain of length 6.} \label{exbenz1}
\end{figure}
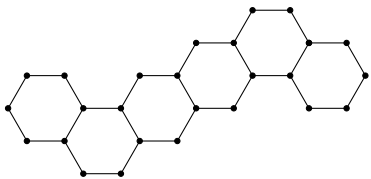

If all $n-2$ interior hexagons of a benzenoid chain with $n$ hexagons are straight, we call the chain a \emph{polyacene} and denote it by $L_n$. If all interior hexagons are kinky, the chain is called a \emph{polyphenacene}. Since the number of perfect matchings in a polyphenacene of length $n$ is equal to the $(n + 2)$-nd Fibonacci number $F_{n+2}$, these chains are also known as \emph{fibonacenes} \cite{cyvingut}. We consider two specific families of polyphenacenes depicted in Figure \ref{exbenz2}: the \emph{zig-zag polyphenacene}, $Z_n$, and \emph{helicene}, $H_n$.

\begin{figure}[h]
\begin{center}
\begin{tikzpicture}[scale=.5]
\draw (-2.5,0) node {$L_n$};
\draw (.87,-.5) circle (2pt) [fill=black];
\draw (.87,.5) circle (2pt) [fill=black];
\draw (-.87,.5) circle (2pt) [fill=black];
\draw (-.87,-.5) circle (2pt) [fill=black];
\draw (0,1) circle (2pt) [fill=black];
\draw (0,-1) circle (2pt) [fill=black];
\draw (0,1)--(.87,.5);
\draw (.87,-.5)--(.87,.5);
\draw (.87,-.5)--(0,-1);
\draw (-.87,-.5)--(0,-1);
\draw (-.87,-.5)--(-.87,.5);
\draw (0,1)--(-.87,.5);

\draw (2.61,.5) circle (2pt) [fill=black];
\draw (2.61,-.5) circle (2pt) [fill=black];
\draw (1.74,1) circle (2pt) [fill=black];
\draw (1.74,-1) circle (2pt) [fill=black];
\draw (2.61,-.5)--(1.74,-1);
\draw (2.61,-.5)--(2.61,.5);
\draw (1.74,1)--(2.61,.5);
\draw (1.74,1)--(.87,.5);
\draw (1.74,-1)--(.87,-.5);

\draw (4.35,.5) circle (2pt) [fill=black];
\draw (4.35,-.5) circle (2pt) [fill=black];
\draw (3.48,1) circle (2pt) [fill=black];
\draw (3.48,-1) circle (2pt) [fill=black];
\draw (4.35,-.5)--(3.48,-1);
\draw (4.35,-.5)--(4.35,.5);
\draw (3.48,1)--(4.35,.5);
\draw (3.48,1)--(2.61,.5);
\draw (3.48,-1)--(2.61,-.5);

\draw (6.09,.5) circle (2pt) [fill=black];
\draw (6.09,-.5) circle (2pt) [fill=black];
\draw (5.22,1) circle (2pt) [fill=black];
\draw (5.22,-1) circle (2pt) [fill=black];
\draw (6.09,-.5)--(5.22,-1);
\draw (6.09,-.5)--(6.09,.5);
\draw (5.22,1)--(6.09,.5);
\draw (5.22,1)--(4.35,.5);
\draw (5.22,-1)--(4.35,-.5);

\draw (7.83,.5) circle (2pt) [fill=black];
\draw (7.83,-.5) circle (2pt) [fill=black];
\draw (6.96,1) circle (2pt) [fill=black];
\draw (6.96,-1) circle (2pt) [fill=black];
\draw (7.83,-.5)--(6.96,-1);
\draw (7.83,-.5)--(7.83,.5);
\draw (6.96,1)--(7.83,.5);
\draw (6.96,1)--(6.09,.5);
\draw (6.96,-1)--(6.09,-.5);

\draw (0,0) node {$1$};
\draw (1.74,0) node {$2$};
\draw (3.48,0) node {$\cdots$};
\draw (6.96,0) node {$n$};
\end{tikzpicture}

\vspace{.4cm}

\begin{tikzpicture}[scale=.5]
\draw (-2.5,0) node {$Z_n$};
\draw (-.5,.87) circle (2pt) [fill=black];
\draw (.5,.87) circle (2pt) [fill=black];
\draw (.5,-.87) circle (2pt) [fill=black];
\draw (-.5,-.87) circle (2pt) [fill=black];
\draw (1,0) circle (2pt) [fill=black];
\draw (-1,0) circle (2pt) [fill=black];
\draw (1,0)--(.5,.87);
\draw (-.5,.87)--(.5,.87);
\draw (-.5,.87)--(-1,0);
\draw (-.5,-.87)--(-1,0);
\draw (-.5,-.87)--(.5,-.87);
\draw (1,0)--(.5,-.87);

\draw (2,0) circle (2pt) [fill=black];
\draw (2.5,-.87) circle (2pt) [fill=black];
\draw (1,-1.74) circle (2pt) [fill=black];
\draw (2,-1.74) circle (2pt) [fill=black];
\draw (1,0)--(2,0);
\draw (2.5,-.87)--(2,0);
\draw (2.5,-.87)--(2,-1.74);
\draw (1,-1.74)--(2,-1.74);
\draw (1,-1.74)--(.5,-.87);

\draw (2.5,.87) circle (2pt) [fill=black];
\draw (3.5,.87) circle (2pt) [fill=black];
\draw (3.5,-.87) circle (2pt) [fill=black];
\draw (4,0) circle (2pt) [fill=black];
\draw (4,0)--(3.5,.87);
\draw (2.5,.87)--(3.5,.87);
\draw (2.5,.87)--(2,0);
\draw (2.5,-.87)--(3.5,-.87);
\draw (4,0)--(3.5,-.87);

\draw (5,0) circle (2pt) [fill=black];
\draw (5.5,-.87) circle (2pt) [fill=black];
\draw (4,-1.74) circle (2pt) [fill=black];
\draw (5,-1.74) circle (2pt) [fill=black];
\draw (4,0)--(5,0);
\draw (5.5,-.87)--(5,0);
\draw (5.5,-.87)--(5,-1.74);
\draw (4,-1.74)--(5,-1.74);
\draw (4,-1.74)--(3.5,-.87);

\draw (5.5,.87) circle (2pt) [fill=black];
\draw (6.5,.87) circle (2pt) [fill=black];
\draw (6.5,-.87) circle (2pt) [fill=black];
\draw (7,0) circle (2pt) [fill=black];
\draw (7,0)--(6.5,.87);
\draw (5.5,.87)--(6.5,.87);
\draw (5.5,.87)--(5,0);
\draw (5.5,-.87)--(6.5,-.87);
\draw (7,0)--(6.5,-.87);

\draw (0,0) node {$1$};
\draw (1.5,-.87) node {$2$};
\draw (3,0) node {$\cdots$};
\draw (6,0) node {$n$};
\end{tikzpicture}

\vspace{.4cm}

\begin{tikzpicture}[scale=.5]
\draw (-2.5,0) node {$H_n$};
\draw (.87,-.5) circle (2pt) [fill=black];
\draw (.87,.5) circle (2pt) [fill=black];
\draw (-.87,.5) circle (2pt) [fill=black];
\draw (-.87,-.5) circle (2pt) [fill=black];
\draw (0,1) circle (2pt) [fill=black];
\draw (0,-1) circle (2pt) [fill=black];
\draw (0,1)--(.87,.5);
\draw (.87,-.5)--(.87,.5);
\draw (.87,-.5)--(0,-1);
\draw (-.87,-.5)--(0,-1);
\draw (-.87,-.5)--(-.87,.5);
\draw (0,1)--(-.87,.5);

\draw (2.61,.5) circle (2pt) [fill=black];
\draw (2.61,-.5) circle (2pt) [fill=black];
\draw (1.74,1) circle (2pt) [fill=black];
\draw (1.74,-1) circle (2pt) [fill=black];
\draw (2.61,-.5)--(1.74,-1);
\draw (2.61,-.5)--(2.61,.5);
\draw (1.74,1)--(2.61,.5);
\draw (1.74,1)--(.87,.5);
\draw (1.74,-1)--(.87,-.5);

\draw (3.48,-1) circle (2pt) [fill=black];
\draw (3.48,-2) circle (2pt) [fill=black];
\draw (2.61,-2.5) circle (2pt) [fill=black];
\draw (1.74,-2) circle (2pt) [fill=black];
\draw (1.74,-2)--(1.74,-1);
\draw (3.48,-2)--(2.61,-2.5);
\draw (3.48,-2)--(3.48,-1);
\draw (2.61,-.5)--(3.48,-1);
\draw (2.61,-2.5)--(1.74,-2);

\draw (2.61,-3.5) circle (2pt) [fill=black];
\draw (1.74,-4) circle (2pt) [fill=black];
\draw (.87,-2.5) circle (2pt) [fill=black];
\draw (.87,-3.5) circle (2pt) [fill=black];
\draw (.87,-2.5)--(.87,-3.5);
\draw (2.61,-3.5)--(1.74,-4);
\draw (2.61,-3.5)--(2.61,-2.5);
\draw (1.74,-2)--(.87,-2.5);
\draw (1.74,-4)--(.87,-3.5);

\draw (-.87,-2.5) circle (2pt) [fill=black];
\draw (-.87,-3.5) circle (2pt) [fill=black];
\draw (0,-2) circle (2pt) [fill=black];
\draw (0,-4) circle (2pt) [fill=black];
\draw (0,-2)--(.87,-2.5);
\draw (.87,-3.5)--(0,-4);
\draw (-.87,-3.5)--(0,-4);
\draw (-.87,-3.5)--(-.87,-2.5);
\draw (0,-2)--(-.87,-2.5);

\draw (0,0) node {$1$};
\draw (1.74,0) node {$2$};
\draw (2.61,-1.4) node {$\vdots$};
\draw (0,-3) node {$n$};
\end{tikzpicture}

\end{center}
\caption{The polyacene, zig-zag polyphenacene, and helicene chains.} \label{exbenz2}
\end{figure}
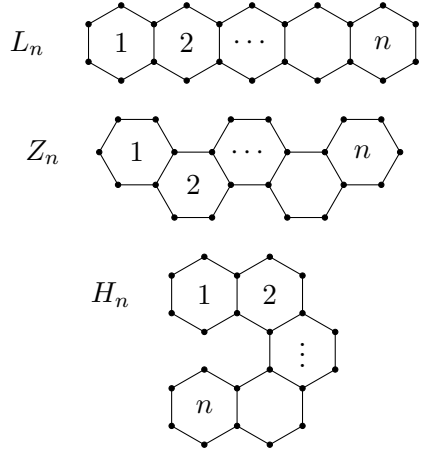

\section{Chain hexagonal cacti}

\subsection{Generating functions} \label{seccactigen}

In this section, we obtain ordinary generating functions for the number of maximal matchings in the hexagonal chain cacti $P_n$, $M_n$, and $O_n$. To do this, we first find recursions for the number of maximal matchings using auxiliary graphs (initial conditions are obtained by direct counting). These recursions can be verified via casework. By introducing generating functions for the number of maximal matchings in each auxiliary graph, the recursions can be transformed into a solvable system of equations in terms of unknown generating functions. Finally, we solve this system of equations for the desired generating function. We omit the details of most of these computations.

\begin{figure}[h]
\begin{center}
\begin{tikzpicture}[scale=.4]
\draw (-2.5,0) node {$P_n$};
\draw (-.5,.87) circle (2pt) [fill=black];
\draw (.5,.87) circle (2pt) [fill=black];
\draw (.5,-.87) circle (2pt) [fill=black];
\draw (-.5,-.87) circle (2pt) [fill=black];
\draw (1,0) circle (2pt) [fill=black];
\draw (-1,0) circle (2pt) [fill=black];
\draw (1,0)--(.5,.87);
\draw (-.5,.87)--(.5,.87);
\draw (-.5,.87)--(-1,0);
\draw (-.5,-.87)--(-1,0);
\draw (-.5,-.87)--(.5,-.87);
\draw (1,0)--(.5,-.87);

\draw (1.5,.87) circle (2pt) [fill=black];
\draw (2.5,.87) circle (2pt) [fill=black];
\draw (2.5,-.87) circle (2pt) [fill=black];
\draw (1.5,-.87) circle (2pt) [fill=black];
\draw (3,0) circle (2pt) [fill=black];
\draw (3,0)--(2.5,.87);
\draw (1.5,.87)--(2.5,.87);
\draw (1.5,.87)--(1,0);
\draw (1.5,-.87)--(1,0);
\draw (1.5,-.87)--(2.5,-.87);
\draw (3,0)--(2.5,-.87);

\draw (3.5,.87) circle (2pt) [fill=black];
\draw (4.5,.87) circle (2pt) [fill=black];
\draw (4.5,-.87) circle (2pt) [fill=black];
\draw (3.5,-.87) circle (2pt) [fill=black];
\draw (5,0) circle (2pt) [fill=black];
\draw (5,0)--(4.5,.87);
\draw (3.5,.87)--(4.5,.87);
\draw (3.5,.87)--(3,0);
\draw (3.5,-.87)--(3,0);
\draw (3.5,-.87)--(4.5,-.87);
\draw (5,0)--(4.5,-.87);

\draw (5.5,.87) circle (2pt) [fill=black];
\draw (6.5,.87) circle (2pt) [fill=black];
\draw (6.5,-.87) circle (2pt) [fill=black];
\draw (5.5,-.87) circle (2pt) [fill=black];
\draw (7,0) circle (2pt) [fill=black];
\draw (7,0)--(6.5,.87);
\draw (5.5,.87)--(6.5,.87);
\draw (5.5,.87)--(5,0);
\draw (5.5,-.87)--(5,0);
\draw (5.5,-.87)--(6.5,-.87);
\draw (7,0)--(6.5,-.87);

\draw (7.5,.87) circle (2pt) [fill=black];
\draw (8.5,.87) circle (2pt) [fill=black];
\draw (8.5,-.87) circle (2pt) [fill=black];
\draw (7.5,-.87) circle (2pt) [fill=black];
\draw (9,0) circle (2pt) [fill=black];
\draw (9,0)--(8.5,.87);
\draw (7.5,.87)--(8.5,.87);
\draw (7.5,.87)--(7,0);
\draw (7.5,-.87)--(7,0);
\draw (7.5,-.87)--(8.5,-.87);
\draw (9,0)--(8.5,-.87);

\draw (0,0) node {$1$};
\draw (2,0) node {$2$};
\draw (4,0) node {$\cdots$};
\draw (8,0) node {$n$};
\end{tikzpicture}
\hspace{.5cm}
\begin{tikzpicture}[scale=.4]
\draw (-2.5,0) node {$P^1_n$};
\draw (-.5,.87) circle (2pt) [fill=black];
\draw (.5,.87) circle (2pt) [fill=black];
\draw (.5,-.87) circle (2pt) [fill=black];
\draw (-.5,-.87) circle (2pt) [fill=black];
\draw (1,0) circle (2pt) [fill=black];
\draw (-1,0) circle (2pt) [fill=black];
\draw (1,0)--(.5,.87);
\draw (-.5,.87)--(.5,.87);
\draw (-.5,.87)--(-1,0);
\draw (-.5,-.87)--(-1,0);
\draw (-.5,-.87)--(.5,-.87);
\draw (1,0)--(.5,-.87);

\draw (1.5,.87) circle (2pt) [fill=black];
\draw (2.5,.87) circle (2pt) [fill=black];
\draw (2.5,-.87) circle (2pt) [fill=black];
\draw (1.5,-.87) circle (2pt) [fill=black];
\draw (3,0) circle (2pt) [fill=black];
\draw (3,0)--(2.5,.87);
\draw (1.5,.87)--(2.5,.87);
\draw (1.5,.87)--(1,0);
\draw (1.5,-.87)--(1,0);
\draw (1.5,-.87)--(2.5,-.87);
\draw (3,0)--(2.5,-.87);

\draw (3.5,.87) circle (2pt) [fill=black];
\draw (4.5,.87) circle (2pt) [fill=black];
\draw (4.5,-.87) circle (2pt) [fill=black];
\draw (3.5,-.87) circle (2pt) [fill=black];
\draw (5,0) circle (2pt) [fill=black];
\draw (5,0)--(4.5,.87);
\draw (3.5,.87)--(4.5,.87);
\draw (3.5,.87)--(3,0);
\draw (3.5,-.87)--(3,0);
\draw (3.5,-.87)--(4.5,-.87);
\draw (5,0)--(4.5,-.87);

\draw (5.5,.87) circle (2pt) [fill=black];
\draw (6.5,.87) circle (2pt) [fill=black];
\draw (6.5,-.87) circle (2pt) [fill=black];
\draw (5.5,-.87) circle (2pt) [fill=black];
\draw (7,0) circle (2pt) [fill=black];
\draw (7,0)--(6.5,.87);
\draw (5.5,.87)--(6.5,.87);
\draw (5.5,.87)--(5,0);
\draw (5.5,-.87)--(5,0);
\draw (5.5,-.87)--(6.5,-.87);
\draw (7,0)--(6.5,-.87);

\draw (7.5,.87) circle (2pt) [fill=black];
\draw (8.5,.87) circle (2pt) [fill=black];
\draw (8.5,-.87) circle (2pt) [fill=black];
\draw (7.5,-.87) circle (2pt) [fill=black];
\draw (9,0) circle (2pt) [fill=black];
\draw (9,0)--(8.5,.87);
\draw (7.5,.87)--(8.5,.87);
\draw (7.5,.87)--(7,0);
\draw (7.5,-.87)--(7,0);
\draw (7.5,-.87)--(8.5,-.87);
\draw (9,0)--(8.5,-.87);

\draw (9.5,.87) circle (2pt) [fill=black];
\draw (10.5,-.87) circle (2pt) [fill=black];
\draw (9.5,-.87) circle (2pt) [fill=black];
\draw (9.5,.87)--(9,0);
\draw (9.5,-.87)--(9,0);
\draw (9.5,-.87)--(10.5,-.87);

\draw (0,0) node {$1$};
\draw (2,0) node {$2$};
\draw (4,0) node {$\cdots$};
\draw (8,0) node {$n$};
\end{tikzpicture}

\vspace{.5cm}

\begin{tikzpicture}[scale=.4]
\draw (-2.5,0) node {$P^2_n$};
\draw (-.5,.87) circle (2pt) [fill=black];
\draw (.5,.87) circle (2pt) [fill=black];
\draw (.5,-.87) circle (2pt) [fill=black];
\draw (-.5,-.87) circle (2pt) [fill=black];
\draw (1,0) circle (2pt) [fill=black];
\draw (-1,0) circle (2pt) [fill=black];
\draw (1,0)--(.5,.87);
\draw (-.5,.87)--(.5,.87);
\draw (-.5,.87)--(-1,0);
\draw (-.5,-.87)--(-1,0);
\draw (-.5,-.87)--(.5,-.87);
\draw (1,0)--(.5,-.87);

\draw (1.5,.87) circle (2pt) [fill=black];
\draw (2.5,.87) circle (2pt) [fill=black];
\draw (2.5,-.87) circle (2pt) [fill=black];
\draw (1.5,-.87) circle (2pt) [fill=black];
\draw (3,0) circle (2pt) [fill=black];
\draw (3,0)--(2.5,.87);
\draw (1.5,.87)--(2.5,.87);
\draw (1.5,.87)--(1,0);
\draw (1.5,-.87)--(1,0);
\draw (1.5,-.87)--(2.5,-.87);
\draw (3,0)--(2.5,-.87);

\draw (3.5,.87) circle (2pt) [fill=black];
\draw (4.5,.87) circle (2pt) [fill=black];
\draw (4.5,-.87) circle (2pt) [fill=black];
\draw (3.5,-.87) circle (2pt) [fill=black];
\draw (5,0) circle (2pt) [fill=black];
\draw (5,0)--(4.5,.87);
\draw (3.5,.87)--(4.5,.87);
\draw (3.5,.87)--(3,0);
\draw (3.5,-.87)--(3,0);
\draw (3.5,-.87)--(4.5,-.87);
\draw (5,0)--(4.5,-.87);

\draw (5.5,.87) circle (2pt) [fill=black];
\draw (6.5,.87) circle (2pt) [fill=black];
\draw (6.5,-.87) circle (2pt) [fill=black];
\draw (5.5,-.87) circle (2pt) [fill=black];
\draw (7,0) circle (2pt) [fill=black];
\draw (7,0)--(6.5,.87);
\draw (5.5,.87)--(6.5,.87);
\draw (5.5,.87)--(5,0);
\draw (5.5,-.87)--(5,0);
\draw (5.5,-.87)--(6.5,-.87);
\draw (7,0)--(6.5,-.87);

\draw (7.5,.87) circle (2pt) [fill=black];
\draw (8.5,.87) circle (2pt) [fill=black];
\draw (8.5,-.87) circle (2pt) [fill=black];
\draw (7.5,-.87) circle (2pt) [fill=black];
\draw (9,0) circle (2pt) [fill=black];
\draw (9,0)--(8.5,.87);
\draw (7.5,.87)--(8.5,.87);
\draw (7.5,.87)--(7,0);
\draw (7.5,-.87)--(7,0);
\draw (7.5,-.87)--(8.5,-.87);
\draw (9,0)--(8.5,-.87);

\draw (9.5,.87) circle (2pt) [fill=black];
\draw (9.5,.87)--(9,0);

\draw (0,0) node {$1$};
\draw (2,0) node {$2$};
\draw (4,0) node {$\cdots$};
\draw (8,0) node {$n$};
\end{tikzpicture}
\hspace{.5cm}
\begin{tikzpicture}[scale=.4]
\draw (-2.5,0) node {$P^3_n$};
\draw (-.5,.87) circle (2pt) [fill=black];
\draw (.5,.87) circle (2pt) [fill=black];
\draw (.5,-.87) circle (2pt) [fill=black];
\draw (-.5,-.87) circle (2pt) [fill=black];
\draw (1,0) circle (2pt) [fill=black];
\draw (-1,0) circle (2pt) [fill=black];
\draw (1,0)--(.5,.87);
\draw (-.5,.87)--(.5,.87);
\draw (-.5,.87)--(-1,0);
\draw (-.5,-.87)--(-1,0);
\draw (-.5,-.87)--(.5,-.87);
\draw (1,0)--(.5,-.87);

\draw (1.5,.87) circle (2pt) [fill=black];
\draw (2.5,.87) circle (2pt) [fill=black];
\draw (2.5,-.87) circle (2pt) [fill=black];
\draw (1.5,-.87) circle (2pt) [fill=black];
\draw (3,0) circle (2pt) [fill=black];
\draw (3,0)--(2.5,.87);
\draw (1.5,.87)--(2.5,.87);
\draw (1.5,.87)--(1,0);
\draw (1.5,-.87)--(1,0);
\draw (1.5,-.87)--(2.5,-.87);
\draw (3,0)--(2.5,-.87);

\draw (3.5,.87) circle (2pt) [fill=black];
\draw (4.5,.87) circle (2pt) [fill=black];
\draw (4.5,-.87) circle (2pt) [fill=black];
\draw (3.5,-.87) circle (2pt) [fill=black];
\draw (5,0) circle (2pt) [fill=black];
\draw (5,0)--(4.5,.87);
\draw (3.5,.87)--(4.5,.87);
\draw (3.5,.87)--(3,0);
\draw (3.5,-.87)--(3,0);
\draw (3.5,-.87)--(4.5,-.87);
\draw (5,0)--(4.5,-.87);

\draw (5.5,.87) circle (2pt) [fill=black];
\draw (6.5,.87) circle (2pt) [fill=black];
\draw (6.5,-.87) circle (2pt) [fill=black];
\draw (5.5,-.87) circle (2pt) [fill=black];
\draw (7,0) circle (2pt) [fill=black];
\draw (7,0)--(6.5,.87);
\draw (5.5,.87)--(6.5,.87);
\draw (5.5,.87)--(5,0);
\draw (5.5,-.87)--(5,0);
\draw (5.5,-.87)--(6.5,-.87);
\draw (7,0)--(6.5,-.87);

\draw (7.5,.87) circle (2pt) [fill=black];
\draw (8.5,.87) circle (2pt) [fill=black];
\draw (8.5,-.87) circle (2pt) [fill=black];
\draw (7.5,-.87) circle (2pt) [fill=black];
\draw (9,0) circle (2pt) [fill=black];
\draw (9,0)--(8.5,.87);
\draw (7.5,.87)--(8.5,.87);
\draw (7.5,.87)--(7,0);
\draw (7.5,-.87)--(7,0);
\draw (7.5,-.87)--(8.5,-.87);
\draw (9,0)--(8.5,-.87);

\draw (9.5,.87) circle (2pt) [fill=black];
\draw (10.5,.87) circle (2pt) [fill=black];
\draw (10.5,-.87) circle (2pt) [fill=black];
\draw (9.5,-.87) circle (2pt) [fill=black];
\draw (9.5,.87)--(10.5,.87);
\draw (9.5,.87)--(9,0);
\draw (9.5,-.87)--(9,0);
\draw (9.5,-.87)--(10.5,-.87);

\draw (0,0) node {$1$};
\draw (2,0) node {$2$};
\draw (4,0) node {$\cdots$};
\draw (8,0) node {$n$};
\end{tikzpicture}

\end{center}
\caption{Auxiliary graphs for $P_n$.} \label{auxpara}
\end{figure}
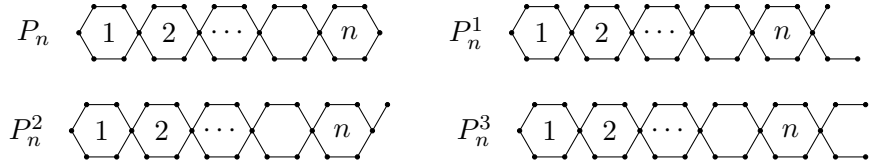

\begin{lemma} Let $p_n$ be the number of maximal matchings in $P_n$ and $p^i_n$ be the number of maximal matchings in the auxiliary graph $P^i_n$ in Figure \ref{auxpara}. Then\\

$(i)$ $p_n = 2p^1_{n-1} + p_{n-1}$, \\

$(ii)$ $p^1 _n = p^2_{n} + p^3_{n-1}$, \\

$(iii)$ $p^2 _n = p^3_{n-1} + 2p^1_{n-1}$, \\

$(iv)$ $p^3 _n = p_n + 2p^3_{n-1}$,\\

\noindent with the initial conditions $p_0=1$, $p^1_0=2$, $p^2_0=1$, and $p^3_0=3$.
\end{lemma}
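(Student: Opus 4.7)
The plan is to prove each recurrence by a case analysis on a carefully chosen vertex at the right end of the graph, then identify the restricted matchings with maximal matchings of a smaller auxiliary graph. I will use the standard characterization that a matching $M$ of $G$ is maximal iff $V(G)\setminus V(M)$ is independent in $G$, together with two structural identities visible from the figures: if $v$ denotes the far (degree-$2$) vertex of the terminal hexagon of $P_n$ and $u$ is one of its neighbors in that hexagon, then
\[
P_n - v \ \cong\ P^3_{n-1} \qquad \text{and} \qquad P_n - v - u \ \cong\ P^1_{n-1},
\]
where in each case the pendant ``extras'' attach at the vertex shared by hexagons $n-1$ and $n$ (equivalently, the far degree-$2$ vertex of hexagon $n-1$ inside $P_{n-1}$). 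A direct check shows that a maximal matching of the larger graph which contains the edge $uv$ (second identity) or in which $v$ is covered externally (first identity) restricts to a maximal matching of the smaller graph, and conversely every such maximal matching extends uniquely.

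For \textbf{(i)}, label $v$'s hexagon-$n$ neighbors $a_2,a_4$ and their further neighbors $a_1,a_5$. Maximality forces exactly one of: $a_2v\in M$; $a_4v\in M$; or $v$ is unmatched, in which case $a_2,a_4\in V(M)$ and hence $a_1a_2, a_4a_5\in M$. The first two cases bijectively yield maximal matchings of $P^1_{n-1}$ via the second identity, while in the third case no vertex of hexagon $n$ is matched into $P_{n-1}$ so the restriction to $P_{n-1}$ is an arbitrary maximal matching, contributing $p_{n-1}$. Recurrences \textbf{(ii)}, \textbf{(iii)}, \textbf{(iv)} follow by analogous conditioning on pendants of the ``tail''. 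In (iii), split $P^2_n$ according to whether $va\in M$: if yes, $v$ is externally matched and the restriction to $P_n$ is maximal in $P_n-v \cong P^3_{n-1}$; if no, $v$ must be matched inside $P_n$ via $a_2$ or $a_4$, each subcase contributing $p^1_{n-1}$ by the second identity. In (ii), split $P^1_n$ (with pendant $a$ and path $v$-$b$-$c$) according to whether $bc\in M$: if yes, removing $\{b,c\}$ leaves exactly $P^2_n$; if no, $c$ forces $vb\in M$, hence $v$ externally matched, and the restriction to $P_n$ is maximal in $P_n-v\cong P^3_{n-1}$. In (iv), split $P^3_n$ (with attached paths $v$-$p_i$-$q_i$ for $i=1,2$) on how $v$ is matched: if $vp_i\in M$ for some $i$, the other path forces $p_{3-i}q_{3-i}\in M$ and the restriction to $P_n$ is maximal in $P_n-v \cong P^3_{n-1}$ (two symmetric cases, contributing $2p^3_{n-1}$); otherwise both $p_iq_i\in M$ are forced and any maximal matching of $P_n$ (whether $v$ is matched there or $v$ is unmatched with its two hexagon-$n$ neighbors in $V(M)$) extends validly, contributing $p_n$.

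The main point requiring care is verifying each bijection in both directions: one must check that the restriction of a maximal matching satisfies the maximality condition on the smaller auxiliary graph, and conversely that every maximal matching of the smaller graph lifts uniquely. Both checks reduce to the observation that every edge of the smaller graph is an edge of the larger graph with endpoints disjoint from the removed vertices, so coverage transfers directly; edges of the larger graph at the removed vertices are automatically covered either by the externally matched vertex or by the forced edges incident to the removed neighbors. The initial conditions $p_0 = 1$, $p^1_0 = 2$, $p^2_0 = 1$, $p^3_0 = 3$ follow from direct enumeration on the small base graphs: $P_0$ is a single vertex, $P^2_0$ is an edge, $P^1_0$ is a tree on four vertices with maximal matchings $\{vb\}$ and $\{va,bc\}$, and $P^3_0$ is a ``double cherry'' on five vertices with three maximal matchings obtained by choosing at most one of $p_1q_1, p_2q_2$ to omit.
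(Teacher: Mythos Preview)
Your proof is correct and follows essentially the same approach as the paper, which states only that ``these recursions can be verified via casework'' and omits all details. You have supplied exactly that casework: conditioning on how the far vertex of the last hexagon (or the pendant tail) is matched, using the two key structural identifications $P_n - v \cong P^3_{n-1}$ and $P_n - \{v,u\} \cong P^1_{n-1}$, and checking the bijections in both directions. Your treatment of the initial conditions by interpreting $P_0$ as a single vertex (so that $P^1_0$, $P^2_0$, $P^3_0$ are the corresponding small trees) is consistent with the paper's values and with the recurrences.
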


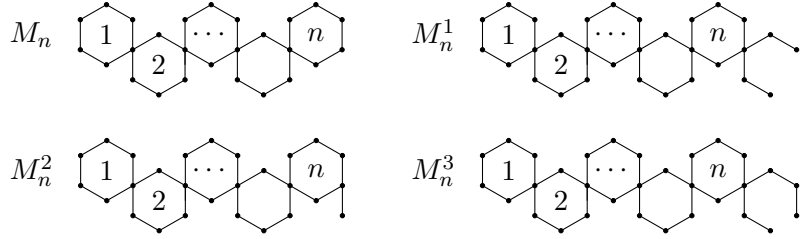
\begin{figure}[h]
\begin{center}
\begin{tikzpicture}[scale=.4]
\draw (-2.5,0) node {$M_n$};
\draw (.87,-.5) circle (2pt) [fill=black];
\draw (.87,.5) circle (2pt) [fill=black];
\draw (-.87,.5) circle (2pt) [fill=black];
\draw (-.87,-.5) circle (2pt) [fill=black];
\draw (0,1) circle (2pt) [fill=black];
\draw (0,-1) circle (2pt) [fill=black];
\draw (0,1)--(.87,.5);
\draw (.87,-.5)--(.87,.5);
\draw (.87,-.5)--(0,-1);
\draw (-.87,-.5)--(0,-1);
\draw (-.87,-.5)--(-.87,.5);
\draw (0,1)--(-.87,.5);

\draw (2.61,-.5) circle (2pt) [fill=black];
\draw (.87,-1.5) circle (2pt) [fill=black];
\draw (2.61,-1.5) circle (2pt) [fill=black];
\draw (1.74,0) circle (2pt) [fill=black];
\draw (1.74,-2) circle (2pt) [fill=black];
\draw (.87,-1.5)--(.87,-.5);
\draw (2.61,-1.5)--(1.74,-2);
\draw (2.61,-1.5)--(2.61,-.5);
\draw (1.74,0)--(2.61,-.5);
\draw (1.74,0)--(.87,-.5);
\draw (1.74,-2)--(.87,-1.5);

\draw (4.35,.5) circle (2pt) [fill=black];
\draw (4.35,-.5) circle (2pt) [fill=black];
\draw (3.48,1) circle (2pt) [fill=black];
\draw (3.48,-1) circle (2pt) [fill=black];
\draw (2.61,.5) circle (2pt) [fill=black];
\draw (2.61,.5)--(2.61,-1);
\draw (4.35,-.5)--(3.48,-1);
\draw (4.35,-.5)--(4.35,.5);
\draw (3.48,1)--(4.35,.5);
\draw (3.48,1)--(2.61,.5);
\draw (3.48,-1)--(2.61,-.5);

\draw (6.09,-.5) circle (2pt) [fill=black];
\draw (6.09,-1.5) circle (2pt) [fill=black];
\draw (5.22,0) circle (2pt) [fill=black];
\draw (5.22,-2) circle (2pt) [fill=black];
\draw (4.35,-1.5) circle (2pt) [fill=black];
\draw (4.35,-1.5)--(4.35,-.5);
\draw (6.09,-1.5)--(5.22,-2);
\draw (6.09,-1.5)--(6.09,-.5);
\draw (5.22,0)--(6.09,-.5);
\draw (5.22,0)--(4.35,-.5);
\draw (5.22,-2)--(4.35,-1.5);

\draw (7.83,.5) circle (2pt) [fill=black];
\draw (7.83,-.5) circle (2pt) [fill=black];
\draw (6.96,1) circle (2pt) [fill=black];
\draw (6.96,-1) circle (2pt) [fill=black];
\draw (6.09,.5) circle (2pt) [fill=black];
\draw (6.09,.5)--(6.09,-.5);
\draw (7.83,-.5)--(6.96,-1);
\draw (7.83,-.5)--(7.83,.5);
\draw (6.96,1)--(7.83,.5);
\draw (6.96,1)--(6.09,.5);
\draw (6.96,-1)--(6.09,-.5);

\draw (0,0) node {$1$};
\draw (1.74,-1) node {$2$};
\draw (3.48,0) node {$\cdots$};
\draw (6.96,0) node {$n$};
\end{tikzpicture}
\hspace{.5cm}
\begin{tikzpicture}[scale=.4]
\draw (-2.5,0) node {$M^1_n$};
\draw (.87,-.5) circle (2pt) [fill=black];
\draw (.87,.5) circle (2pt) [fill=black];
\draw (-.87,.5) circle (2pt) [fill=black];
\draw (-.87,-.5) circle (2pt) [fill=black];
\draw (0,1) circle (2pt) [fill=black];
\draw (0,-1) circle (2pt) [fill=black];
\draw (0,1)--(.87,.5);
\draw (.87,-.5)--(.87,.5);
\draw (.87,-.5)--(0,-1);
\draw (-.87,-.5)--(0,-1);
\draw (-.87,-.5)--(-.87,.5);
\draw (0,1)--(-.87,.5);

\draw (2.61,-.5) circle (2pt) [fill=black];
\draw (.87,-1.5) circle (2pt) [fill=black];
\draw (2.61,-1.5) circle (2pt) [fill=black];
\draw (1.74,0) circle (2pt) [fill=black];
\draw (1.74,-2) circle (2pt) [fill=black];
\draw (.87,-1.5)--(.87,-.5);
\draw (2.61,-1.5)--(1.74,-2);
\draw (2.61,-1.5)--(2.61,-.5);
\draw (1.74,0)--(2.61,-.5);
\draw (1.74,0)--(.87,-.5);
\draw (1.74,-2)--(.87,-1.5);

\draw (4.35,.5) circle (2pt) [fill=black];
\draw (4.35,-.5) circle (2pt) [fill=black];
\draw (3.48,1) circle (2pt) [fill=black];
\draw (3.48,-1) circle (2pt) [fill=black];
\draw (2.61,.5) circle (2pt) [fill=black];
\draw (2.61,.5)--(2.61,-1);
\draw (4.35,-.5)--(3.48,-1);
\draw (4.35,-.5)--(4.35,.5);
\draw (3.48,1)--(4.35,.5);
\draw (3.48,1)--(2.61,.5);
\draw (3.48,-1)--(2.61,-.5);

\draw (6.09,-.5) circle (2pt) [fill=black];
\draw (6.09,-1.5) circle (2pt) [fill=black];
\draw (5.22,0) circle (2pt) [fill=black];
\draw (5.22,-2) circle (2pt) [fill=black];
\draw (4.35,-1.5) circle (2pt) [fill=black];
\draw (4.35,-1.5)--(4.35,-.5);
\draw (6.09,-1.5)--(5.22,-2);
\draw (6.09,-1.5)--(6.09,-.5);
\draw (5.22,0)--(6.09,-.5);
\draw (5.22,0)--(4.35,-.5);
\draw (5.22,-2)--(4.35,-1.5);

\draw (7.83,.5) circle (2pt) [fill=black];
\draw (7.83,-.5) circle (2pt) [fill=black];
\draw (6.96,1) circle (2pt) [fill=black];
\draw (6.96,-1) circle (2pt) [fill=black];
\draw (6.09,.5) circle (2pt) [fill=black];
\draw (6.09,.5)--(6.09,-.5);
\draw (7.83,-.5)--(6.96,-1);
\draw (7.83,-.5)--(7.83,.5);
\draw (6.96,1)--(7.83,.5);
\draw (6.96,1)--(6.09,.5);
\draw (6.96,-1)--(6.09,-.5);

\draw (9.57,-.5) circle (2pt) [fill=black];
\draw (8.7,0) circle (2pt) [fill=black];
\draw (8.7,-2) circle (2pt) [fill=black];
\draw (7.83,-1.5) circle (2pt) [fill=black];
\draw (7.83,-1.5)--(7.83,-.5);
\draw (8.7,0)--(9.57,-.5);
\draw (8.7,0)--(7.83,-.5);
\draw (8.7,-2)--(7.83,-1.5);

\draw (0,0) node {$1$};
\draw (1.74,-1) node {$2$};
\draw (3.48,0) node {$\cdots$};
\draw (6.96,0) node {$n$};
\end{tikzpicture}

\vspace{.5cm}

\begin{tikzpicture}[scale=.4]
\draw (-2.5,0) node {$M^2_n$};
\draw (.87,-.5) circle (2pt) [fill=black];
\draw (.87,.5) circle (2pt) [fill=black];
\draw (-.87,.5) circle (2pt) [fill=black];
\draw (-.87,-.5) circle (2pt) [fill=black];
\draw (0,1) circle (2pt) [fill=black];
\draw (0,-1) circle (2pt) [fill=black];
\draw (0,1)--(.87,.5);
\draw (.87,-.5)--(.87,.5);
\draw (.87,-.5)--(0,-1);
\draw (-.87,-.5)--(0,-1);
\draw (-.87,-.5)--(-.87,.5);
\draw (0,1)--(-.87,.5);

\draw (2.61,-.5) circle (2pt) [fill=black];
\draw (.87,-1.5) circle (2pt) [fill=black];
\draw (2.61,-1.5) circle (2pt) [fill=black];
\draw (1.74,0) circle (2pt) [fill=black];
\draw (1.74,-2) circle (2pt) [fill=black];
\draw (.87,-1.5)--(.87,-.5);
\draw (2.61,-1.5)--(1.74,-2);
\draw (2.61,-1.5)--(2.61,-.5);
\draw (1.74,0)--(2.61,-.5);
\draw (1.74,0)--(.87,-.5);
\draw (1.74,-2)--(.87,-1.5);

\draw (4.35,.5) circle (2pt) [fill=black];
\draw (4.35,-.5) circle (2pt) [fill=black];
\draw (3.48,1) circle (2pt) [fill=black];
\draw (3.48,-1) circle (2pt) [fill=black];
\draw (2.61,.5) circle (2pt) [fill=black];
\draw (2.61,.5)--(2.61,-1);
\draw (4.35,-.5)--(3.48,-1);
\draw (4.35,-.5)--(4.35,.5);
\draw (3.48,1)--(4.35,.5);
\draw (3.48,1)--(2.61,.5);
\draw (3.48,-1)--(2.61,-.5);

\draw (6.09,-.5) circle (2pt) [fill=black];
\draw (6.09,-1.5) circle (2pt) [fill=black];
\draw (5.22,0) circle (2pt) [fill=black];
\draw (5.22,-2) circle (2pt) [fill=black];
\draw (4.35,-1.5) circle (2pt) [fill=black];
\draw (4.35,-1.5)--(4.35,-.5);
\draw (6.09,-1.5)--(5.22,-2);
\draw (6.09,-1.5)--(6.09,-.5);
\draw (5.22,0)--(6.09,-.5);
\draw (5.22,0)--(4.35,-.5);
\draw (5.22,-2)--(4.35,-1.5);

\draw (7.83,.5) circle (2pt) [fill=black];
\draw (7.83,-.5) circle (2pt) [fill=black];
\draw (6.96,1) circle (2pt) [fill=black];
\draw (6.96,-1) circle (2pt) [fill=black];
\draw (6.09,.5) circle (2pt) [fill=black];
\draw (6.09,.5)--(6.09,-.5);
\draw (7.83,-.5)--(6.96,-1);
\draw (7.83,-.5)--(7.83,.5);
\draw (6.96,1)--(7.83,.5);
\draw (6.96,1)--(6.09,.5);
\draw (6.96,-1)--(6.09,-.5);

\draw (7.83,-1.5) circle (2pt) [fill=black];
\draw (7.83,-1.5)--(7.83,-.5);

\draw (0,0) node {$1$};
\draw (1.74,-1) node {$2$};
\draw (3.48,0) node {$\cdots$};
\draw (6.96,0) node {$n$};
\end{tikzpicture}
\hspace{.5cm}
\begin{tikzpicture}[scale=.4]
\draw (-2.5,0) node {$M^3_n$};
\draw (.87,-.5) circle (2pt) [fill=black];
\draw (.87,.5) circle (2pt) [fill=black];
\draw (-.87,.5) circle (2pt) [fill=black];
\draw (-.87,-.5) circle (2pt) [fill=black];
\draw (0,1) circle (2pt) [fill=black];
\draw (0,-1) circle (2pt) [fill=black];
\draw (0,1)--(.87,.5);
\draw (.87,-.5)--(.87,.5);
\draw (.87,-.5)--(0,-1);
\draw (-.87,-.5)--(0,-1);
\draw (-.87,-.5)--(-.87,.5);
\draw (0,1)--(-.87,.5);

\draw (2.61,-.5) circle (2pt) [fill=black];
\draw (.87,-1.5) circle (2pt) [fill=black];
\draw (2.61,-1.5) circle (2pt) [fill=black];
\draw (1.74,0) circle (2pt) [fill=black];
\draw (1.74,-2) circle (2pt) [fill=black];
\draw (.87,-1.5)--(.87,-.5);
\draw (2.61,-1.5)--(1.74,-2);
\draw (2.61,-1.5)--(2.61,-.5);
\draw (1.74,0)--(2.61,-.5);
\draw (1.74,0)--(.87,-.5);
\draw (1.74,-2)--(.87,-1.5);

\draw (4.35,.5) circle (2pt) [fill=black];
\draw (4.35,-.5) circle (2pt) [fill=black];
\draw (3.48,1) circle (2pt) [fill=black];
\draw (3.48,-1) circle (2pt) [fill=black];
\draw (2.61,.5) circle (2pt) [fill=black];
\draw (2.61,.5)--(2.61,-1);
\draw (4.35,-.5)--(3.48,-1);
\draw (4.35,-.5)--(4.35,.5);
\draw (3.48,1)--(4.35,.5);
\draw (3.48,1)--(2.61,.5);
\draw (3.48,-1)--(2.61,-.5);

\draw (6.09,-.5) circle (2pt) [fill=black];
\draw (6.09,-1.5) circle (2pt) [fill=black];
\draw (5.22,0) circle (2pt) [fill=black];
\draw (5.22,-2) circle (2pt) [fill=black];
\draw (4.35,-1.5) circle (2pt) [fill=black];
\draw (4.35,-1.5)--(4.35,-.5);
\draw (6.09,-1.5)--(5.22,-2);
\draw (6.09,-1.5)--(6.09,-.5);
\draw (5.22,0)--(6.09,-.5);
\draw (5.22,0)--(4.35,-.5);
\draw (5.22,-2)--(4.35,-1.5);

\draw (7.83,.5) circle (2pt) [fill=black];
\draw (7.83,-.5) circle (2pt) [fill=black];
\draw (6.96,1) circle (2pt) [fill=black];
\draw (6.96,-1) circle (2pt) [fill=black];
\draw (6.09,.5) circle (2pt) [fill=black];
\draw (6.09,.5)--(6.09,-.5);
\draw (7.83,-.5)--(6.96,-1);
\draw (7.83,-.5)--(7.83,.5);
\draw (6.96,1)--(7.83,.5);
\draw (6.96,1)--(6.09,.5);
\draw (6.96,-1)--(6.09,-.5);

\draw (9.57,-.5) circle (2pt) [fill=black];
\draw (8.7,0) circle (2pt) [fill=black];
\draw (8.7,-2) circle (2pt) [fill=black];
\draw (7.83,-1.5) circle (2pt) [fill=black];
\draw (9.57,-1.5) circle (2pt) [fill=black];
\draw (9.57,-1.5)--(9.57,-.5);
\draw (7.83,-1.5)--(7.83,-.5);
\draw (8.7,0)--(9.57,-.5);
\draw (8.7,0)--(7.83,-.5);
\draw (8.7,-2)--(7.83,-1.5);

\draw (0,0) node {$1$};
\draw (1.74,-1) node {$2$};
\draw (3.48,0) node {$\cdots$};
\draw (6.96,0) node {$n$};
\end{tikzpicture}

\end{center}
\caption{Auxiliary graphs for $M_n$.} \label{auxmeta}
\end{figure}

\begin{lemma} Let $m_n$ be the number of maximal matchings in $M_n$ and $m^i_n$ be the number of maximal matchings in the auxiliary graph $M^i_n$ in Figure \ref{auxmeta}. Then\\

$(i)$ $m_n = 2m^1_{n-1} + m_{n-1}$, \\

$(ii)$ $m^1 _n = m^2_{n} + m^3_{n-1}$, \\

$(iii)$ $m^2 _n = m^3_{n-1} + m^1_{n-1} + m^2_{n-1} + m_{n-1}$, \\

$(iv)$ $m^3 _n = 2m^3_{n-1} + m^1_{n-1} + m^2_{n-1} + m_{n-1} + m^2_{n}$,\\

\noindent with the initial conditions $m_0=1,$ $m^1_0=2,$ $m^2_0=1,$ and $m^3_0=3$.
\end{lemma}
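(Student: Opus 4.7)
I would prove each of the four identities by a case analysis on the terminal (rightmost) portion of the relevant graph, using the standard characterization that a matching $M$ of $G$ is maximal if and only if every edge of $G$ has at least one endpoint saturated by $M$; equivalently, the set of unsaturated vertices of $M$ forms an independent set of $G$. Write $H$ for the $n$-th meta-hexagon of $M_n$, let $u$ be the cut-vertex of $H$ shared with the $(n-1)$-th hexagon, and let $v$ be the vertex of $H$ at distance $2$ from $u$ in $H$ (the would-be cut-vertex with the $(n+1)$-st meta-hexagon); the pendant/path attachments defining $M^1_n$, $M^2_n$, $M^3_n$ are all glued at $v$. A pendant vertex $p$ of degree $1$ with unique neighbour $q$ forces either $pq\in M$ or $q$ saturated by another edge of $M$, since otherwise $pq$ could be added to $M$.

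For (i), I would condition on $M\cap E(H)$ together with the saturation state of $u$. Maximality near $u$ and the meta geometry of $H$ restrict this intersection to a handful of disjoint configurations: two of them, related by the reflection of $H$ fixing $u$ and $v$, leave a residual boundary constraint at $v$ on the chain $M_{n-1}$ that is encoded exactly by the attachment defining $M^1_{n-1}$ (so each contributes $m^1_{n-1}$), while the remaining configuration imposes no such boundary constraint and contributes a plain $m_{n-1}$. Summing yields $m_n=2m^1_{n-1}+m_{n-1}$.

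For (ii)--(iv), I would apply the same case analysis to $M^i_n$, now additionally splitting on which pendant/path edges belong to $M$ (governed by the rule above). For each configuration of $M\cap E(H)$ together with the pendant edges, I would identify which auxiliary graph on the $(n-1)$-chain --- or, in some subcases, on the $n$-chain --- describes the residual constraints, and sum the contributions. The self-referential term $m^2_n$ appearing in (ii) and (iv) arises because one subcase of $M^1_n$ (resp.\ $M^3_n$) forces an outer pendant edge into $M$ and leaves a structure on the rest of the graph that is isomorphic to $M^2_n$ with its pendant.

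The main obstacle is the bookkeeping: because the two cut-vertex positions of a meta-hexagon lie at distance $2$, the saturation states of $u$ and $v$ are coupled through their common neighbour by a single matching edge, so several subcases must carefully track these coupled states and the proof must verify that the decomposition is both exhaustive and pairwise disjoint. Once the casework is checked, the recursions follow immediately, and the initial conditions $m_0=1$, $m^1_0=2$, $m^2_0=1$, $m^3_0=3$ are obtained by direct enumeration of maximal matchings in the small graphs $M_0,M^1_0,M^2_0,M^3_0$ (with the convention that $M_0$ is the empty graph, so that $M_n$ consists of exactly $n$ meta-hexagons).
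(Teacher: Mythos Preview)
Your approach is correct and is essentially the same as the paper's: the paper states that ``these recursions can be verified via casework'' and explicitly omits the details, so your outline of conditioning on the matching's restriction to the terminal hexagon (and the attached pendant pieces) together with the saturation state of the cut-vertex is exactly what is intended. One small slip: in your discussion of (i) you say the residual boundary constraint is ``at $v$ on the chain $M_{n-1}$'', but after stripping the terminal hexagon the relevant attachment point of $M^1_{n-1}$ is $u$ (which, from the perspective of $M_{n-1}$, plays the role your $v$ plays for $M_n$); this is only a notational wobble and does not affect the argument.
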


\begin{figure}[h]
\begin{center}
\begin{tikzpicture}[scale=.4]
\draw (-2.5,0) node {$O_n$};
\draw (-.7,.87) circle (2pt) [fill=black];
\draw (.7,.87) circle (2pt) [fill=black];
\draw (.7,-.87) circle (2pt) [fill=black];
\draw (-.7,-.87) circle (2pt) [fill=black];
\draw (1,0) circle (2pt) [fill=black];
\draw (-1,0) circle (2pt) [fill=black];
\draw (1,0)--(.7,.87);
\draw (-.7,.87)--(.7,.87);
\draw (-.7,.87)--(-1,0);
\draw (-.7,-.87)--(-1,0);
\draw (-.7,-.87)--(.7,-.87);
\draw (1,0)--(.7,-.87);

\draw (.7,-.87) circle (2pt) [fill=black];
\draw (2.1,-.87) circle (2pt) [fill=black];
\draw (2.1,-2.61) circle (2pt) [fill=black];
\draw (.7,-2.61) circle (2pt) [fill=black];
\draw (2.4,-1.74) circle (2pt) [fill=black];
\draw (.4,-1.74) circle (2pt) [fill=black];
\draw (2.4,-1.74)--(2.1,-.87);
\draw (.7,-.87)--(2.1,-.87);
\draw (.7,-.87)--(.4,-1.74);
\draw (.7,-2.61)--(.4,-1.74);
\draw (.7,-2.61)--(2.1,-2.61);
\draw (2.4,-1.74)--(2.1,-2.61);

\draw (2.1,.87) circle (2pt) [fill=black];
\draw (3.5,.87) circle (2pt) [fill=black];
\draw (3.5,-.87) circle (2pt) [fill=black];
\draw (2.1,-.87) circle (2pt) [fill=black];
\draw (3.8,0) circle (2pt) [fill=black];
\draw (1.8,0) circle (2pt) [fill=black];
\draw (3.8,0)--(3.5,.87);
\draw (2.1,.87)--(3.5,.87);
\draw (2.1,.87)--(1.8,0);
\draw (2.1,-.87)--(1.8,0);
\draw (2.1,-.87)--(3.5,-.87);
\draw (3.8,0)--(3.5,-.87);

\draw (3.5,-.87) circle (2pt) [fill=black];
\draw (4.9,-.87) circle (2pt) [fill=black];
\draw (4.9,-2.61) circle (2pt) [fill=black];
\draw (3.5,-2.61) circle (2pt) [fill=black];
\draw (5.2,-1.74) circle (2pt) [fill=black];
\draw (3.2,-1.74) circle (2pt) [fill=black];
\draw (5.2,-1.74)--(4.9,-.87);
\draw (3.5,-.87)--(4.9,-.87);
\draw (3.5,-.87)--(3.2,-1.74);
\draw (3.5,-2.61)--(3.2,-1.74);
\draw (3.5,-2.61)--(4.9,-2.61);
\draw (5.2,-1.74)--(4.9,-2.61);

\draw (4.9,.87) circle (2pt) [fill=black];
\draw (6.3,.87) circle (2pt) [fill=black];
\draw (6.3,-.87) circle (2pt) [fill=black];
\draw (4.9,-.87) circle (2pt) [fill=black];
\draw (6.6,0) circle (2pt) [fill=black];
\draw (4.6,0) circle (2pt) [fill=black];
\draw (6.6,0)--(6.3,.87);
\draw (4.9,.87)--(6.3,.87);
\draw (4.9,.87)--(4.6,0);
\draw (4.9,-.87)--(4.6,0);
\draw (4.9,-.87)--(6.3,-.87);
\draw (6.6,0)--(6.3,-.87);

\draw (0,0) node {$1$};
\draw (1.4,-1.74) node {$2$};
\draw (2.8,0) node {$\cdots$};
\draw (5.6,0) node {$n$};
\end{tikzpicture}
\hspace{.5cm}
\begin{tikzpicture}[scale=.4]
\draw (-2.5,0) node {$O^1_n$};
\draw (-.7,.87) circle (2pt) [fill=black];
\draw (.7,.87) circle (2pt) [fill=black];
\draw (.7,-.87) circle (2pt) [fill=black];
\draw (-.7,-.87) circle (2pt) [fill=black];
\draw (1,0) circle (2pt) [fill=black];
\draw (-1,0) circle (2pt) [fill=black];
\draw (1,0)--(.7,.87);
\draw (-.7,.87)--(.7,.87);
\draw (-.7,.87)--(-1,0);
\draw (-.7,-.87)--(-1,0);
\draw (-.7,-.87)--(.7,-.87);
\draw (1,0)--(.7,-.87);

\draw (.7,-.87) circle (2pt) [fill=black];
\draw (2.1,-.87) circle (2pt) [fill=black];
\draw (2.1,-2.61) circle (2pt) [fill=black];
\draw (.7,-2.61) circle (2pt) [fill=black];
\draw (2.4,-1.74) circle (2pt) [fill=black];
\draw (.4,-1.74) circle (2pt) [fill=black];
\draw (2.4,-1.74)--(2.1,-.87);
\draw (.7,-.87)--(2.1,-.87);
\draw (.7,-.87)--(.4,-1.74);
\draw (.7,-2.61)--(.4,-1.74);
\draw (.7,-2.61)--(2.1,-2.61);
\draw (2.4,-1.74)--(2.1,-2.61);

\draw (2.1,.87) circle (2pt) [fill=black];
\draw (3.5,.87) circle (2pt) [fill=black];
\draw (3.5,-.87) circle (2pt) [fill=black];
\draw (2.1,-.87) circle (2pt) [fill=black];
\draw (3.8,0) circle (2pt) [fill=black];
\draw (1.8,0) circle (2pt) [fill=black];
\draw (3.8,0)--(3.5,.87);
\draw (2.1,.87)--(3.5,.87);
\draw (2.1,.87)--(1.8,0);
\draw (2.1,-.87)--(1.8,0);
\draw (2.1,-.87)--(3.5,-.87);
\draw (3.8,0)--(3.5,-.87);

\draw (3.5,-.87) circle (2pt) [fill=black];
\draw (4.9,-.87) circle (2pt) [fill=black];
\draw (4.9,-2.61) circle (2pt) [fill=black];
\draw (3.5,-2.61) circle (2pt) [fill=black];
\draw (5.2,-1.74) circle (2pt) [fill=black];
\draw (3.2,-1.74) circle (2pt) [fill=black];
\draw (5.2,-1.74)--(4.9,-.87);
\draw (3.5,-.87)--(4.9,-.87);
\draw (3.5,-.87)--(3.2,-1.74);
\draw (3.5,-2.61)--(3.2,-1.74);
\draw (3.5,-2.61)--(4.9,-2.61);
\draw (5.2,-1.74)--(4.9,-2.61);

\draw (4.9,.87) circle (2pt) [fill=black];
\draw (6.3,.87) circle (2pt) [fill=black];
\draw (6.3,-.87) circle (2pt) [fill=black];
\draw (4.9,-.87) circle (2pt) [fill=black];
\draw (6.6,0) circle (2pt) [fill=black];
\draw (4.6,0) circle (2pt) [fill=black];
\draw (6.6,0)--(6.3,.87);
\draw (4.9,.87)--(6.3,.87);
\draw (4.9,.87)--(4.6,0);
\draw (4.9,-.87)--(4.6,0);
\draw (4.9,-.87)--(6.3,-.87);
\draw (6.6,0)--(6.3,-.87);

\draw (6.3,-.87) circle (2pt) [fill=black];
\draw (7.7,-.87) circle (2pt) [fill=black];
\draw (6.3,-2.61) circle (2pt) [fill=black];
\draw (6,-1.74) circle (2pt) [fill=black];
\draw (6.3,-.87)--(7.7,-.87);
\draw (6.3,-.87)--(6,-1.74);
\draw (6.3,-2.61)--(6,-1.74);

\draw (0,0) node {$1$};
\draw (1.4,-1.74) node {$2$};
\draw (2.8,0) node {$\cdots$};
\draw (5.6,0) node {$n$};
\end{tikzpicture}

\vspace{.5cm}

\begin{tikzpicture}[scale=.4]
\draw (-2.5,0) node {$O^2_n$};
\draw (-.7,.87) circle (2pt) [fill=black];
\draw (.7,.87) circle (2pt) [fill=black];
\draw (.7,-.87) circle (2pt) [fill=black];
\draw (-.7,-.87) circle (2pt) [fill=black];
\draw (1,0) circle (2pt) [fill=black];
\draw (-1,0) circle (2pt) [fill=black];
\draw (1,0)--(.7,.87);
\draw (-.7,.87)--(.7,.87);
\draw (-.7,.87)--(-1,0);
\draw (-.7,-.87)--(-1,0);
\draw (-.7,-.87)--(.7,-.87);
\draw (1,0)--(.7,-.87);

\draw (.7,-.87) circle (2pt) [fill=black];
\draw (2.1,-.87) circle (2pt) [fill=black];
\draw (2.1,-2.61) circle (2pt) [fill=black];
\draw (.7,-2.61) circle (2pt) [fill=black];
\draw (2.4,-1.74) circle (2pt) [fill=black];
\draw (.4,-1.74) circle (2pt) [fill=black];
\draw (2.4,-1.74)--(2.1,-.87);
\draw (.7,-.87)--(2.1,-.87);
\draw (.7,-.87)--(.4,-1.74);
\draw (.7,-2.61)--(.4,-1.74);
\draw (.7,-2.61)--(2.1,-2.61);
\draw (2.4,-1.74)--(2.1,-2.61);

\draw (2.1,.87) circle (2pt) [fill=black];
\draw (3.5,.87) circle (2pt) [fill=black];
\draw (3.5,-.87) circle (2pt) [fill=black];
\draw (2.1,-.87) circle (2pt) [fill=black];
\draw (3.8,0) circle (2pt) [fill=black];
\draw (1.8,0) circle (2pt) [fill=black];
\draw (3.8,0)--(3.5,.87);
\draw (2.1,.87)--(3.5,.87);
\draw (2.1,.87)--(1.8,0);
\draw (2.1,-.87)--(1.8,0);
\draw (2.1,-.87)--(3.5,-.87);
\draw (3.8,0)--(3.5,-.87);

\draw (3.5,-.87) circle (2pt) [fill=black];
\draw (4.9,-.87) circle (2pt) [fill=black];
\draw (4.9,-2.61) circle (2pt) [fill=black];
\draw (3.5,-2.61) circle (2pt) [fill=black];
\draw (5.2,-1.74) circle (2pt) [fill=black];
\draw (3.2,-1.74) circle (2pt) [fill=black];
\draw (5.2,-1.74)--(4.9,-.87);
\draw (3.5,-.87)--(4.9,-.87);
\draw (3.5,-.87)--(3.2,-1.74);
\draw (3.5,-2.61)--(3.2,-1.74);
\draw (3.5,-2.61)--(4.9,-2.61);
\draw (5.2,-1.74)--(4.9,-2.61);

\draw (4.9,.87) circle (2pt) [fill=black];
\draw (6.3,.87) circle (2pt) [fill=black];
\draw (6.3,-.87) circle (2pt) [fill=black];
\draw (4.9,-.87) circle (2pt) [fill=black];
\draw (6.6,0) circle (2pt) [fill=black];
\draw (4.6,0) circle (2pt) [fill=black];
\draw (6.6,0)--(6.3,.87);
\draw (4.9,.87)--(6.3,.87);
\draw (4.9,.87)--(4.6,0);
\draw (4.9,-.87)--(4.6,0);
\draw (4.9,-.87)--(6.3,-.87);
\draw (6.6,0)--(6.3,-.87);

\draw (6.3,-.87) circle (2pt) [fill=black];
\draw (7.7,-.87) circle (2pt) [fill=black];
\draw (6.3,-.87)--(7.7,-.87);

\draw (0,0) node {$1$};
\draw (1.4,-1.74) node {$2$};
\draw (2.8,0) node {$\cdots$};
\draw (5.6,0) node {$n$};
\end{tikzpicture}
\hspace{.5cm}
\begin{tikzpicture}[scale=.4]
\draw (-2.5,0) node {$O^3_n$};
\draw (-.7,.87) circle (2pt) [fill=black];
\draw (.7,.87) circle (2pt) [fill=black];
\draw (.7,-.87) circle (2pt) [fill=black];
\draw (-.7,-.87) circle (2pt) [fill=black];
\draw (1,0) circle (2pt) [fill=black];
\draw (-1,0) circle (2pt) [fill=black];
\draw (1,0)--(.7,.87);
\draw (-.7,.87)--(.7,.87);
\draw (-.7,.87)--(-1,0);
\draw (-.7,-.87)--(-1,0);
\draw (-.7,-.87)--(.7,-.87);
\draw (1,0)--(.7,-.87);

\draw (.7,-.87) circle (2pt) [fill=black];
\draw (2.1,-.87) circle (2pt) [fill=black];
\draw (2.1,-2.61) circle (2pt) [fill=black];
\draw (.7,-2.61) circle (2pt) [fill=black];
\draw (2.4,-1.74) circle (2pt) [fill=black];
\draw (.4,-1.74) circle (2pt) [fill=black];
\draw (2.4,-1.74)--(2.1,-.87);
\draw (.7,-.87)--(2.1,-.87);
\draw (.7,-.87)--(.4,-1.74);
\draw (.7,-2.61)--(.4,-1.74);
\draw (.7,-2.61)--(2.1,-2.61);
\draw (2.4,-1.74)--(2.1,-2.61);

\draw (2.1,.87) circle (2pt) [fill=black];
\draw (3.5,.87) circle (2pt) [fill=black];
\draw (3.5,-.87) circle (2pt) [fill=black];
\draw (2.1,-.87) circle (2pt) [fill=black];
\draw (3.8,0) circle (2pt) [fill=black];
\draw (1.8,0) circle (2pt) [fill=black];
\draw (3.8,0)--(3.5,.87);
\draw (2.1,.87)--(3.5,.87);
\draw (2.1,.87)--(1.8,0);
\draw (2.1,-.87)--(1.8,0);
\draw (2.1,-.87)--(3.5,-.87);
\draw (3.8,0)--(3.5,-.87);

\draw (3.5,-.87) circle (2pt) [fill=black];
\draw (4.9,-.87) circle (2pt) [fill=black];
\draw (4.9,-2.61) circle (2pt) [fill=black];
\draw (3.5,-2.61) circle (2pt) [fill=black];
\draw (5.2,-1.74) circle (2pt) [fill=black];
\draw (3.2,-1.74) circle (2pt) [fill=black];
\draw (5.2,-1.74)--(4.9,-.87);
\draw (3.5,-.87)--(4.9,-.87);
\draw (3.5,-.87)--(3.2,-1.74);
\draw (3.5,-2.61)--(3.2,-1.74);
\draw (3.5,-2.61)--(4.9,-2.61);
\draw (5.2,-1.74)--(4.9,-2.61);

\draw (4.9,.87) circle (2pt) [fill=black];
\draw (6.3,.87) circle (2pt) [fill=black];
\draw (6.3,-.87) circle (2pt) [fill=black];
\draw (4.9,-.87) circle (2pt) [fill=black];
\draw (6.6,0) circle (2pt) [fill=black];
\draw (4.6,0) circle (2pt) [fill=black];
\draw (6.6,0)--(6.3,.87);
\draw (4.9,.87)--(6.3,.87);
\draw (4.9,.87)--(4.6,0);
\draw (4.9,-.87)--(4.6,0);
\draw (4.9,-.87)--(6.3,-.87);
\draw (6.6,0)--(6.3,-.87);

\draw (6.3,-.87) circle (2pt) [fill=black];
\draw (7.7,-2.61) circle (2pt) [fill=black];
\draw (6.3,-2.61) circle (2pt) [fill=black];
\draw (8,-1.74) circle (2pt) [fill=black];
\draw (6,-1.74) circle (2pt) [fill=black];

\draw (6.3,-.87)--(6,-1.74);
\draw (6.3,-2.61)--(6,-1.74);
\draw (6.3,-2.61)--(7.7,-2.61);
\draw (8,-1.74)--(7.7,-2.61);

\draw (0,0) node {$1$};
\draw (1.4,-1.74) node {$2$};
\draw (2.8,0) node {$\cdots$};
\draw (5.6,0) node {$n$};
\end{tikzpicture}

\end{center}
\caption{Auxiliary graphs for $O_n$.} \label{auxortho}
\end{figure}
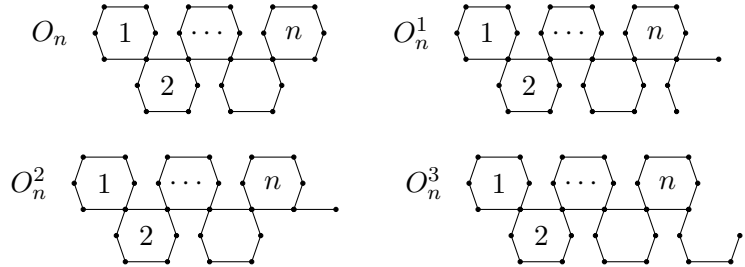

\begin{lemma} Let $o_n$ be the number of maximal matchings in $O_n$ and $o^i_n$ be the number of maximal matchings in the auxiliary graph $O^i_n$ in Figure \ref{auxortho}. Then\\

$(i)$ $o_n = 2o^1_{n-1} + o_{n-1}$, \\

$(ii)$ $o^1 _n = o^2_{n} + o^3_{n-1}$, \\

$(iii)$ $o^2 _n = o^3_{n-1} + o^2_{n-1} + o_{n-1} + 2o^3_{n-2}$, \\

$(iv)$ $o^3 _n = o_n + o^3_{n-1} + o^2_n$,\\

\noindent with the initial conditions $o_0=1$, $o^1_0=2$, $o^2_0=1$, $o^2_1=7$, and $o^3_0=3$.
\end{lemma}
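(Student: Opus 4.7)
The plan is to establish each of the four recurrences by case analysis on how the rightmost hexagon of $O_n$, together with the pendant structure attached in each auxiliary graph, is covered by a maximal matching, exactly in the spirit of the two preceding lemmas. For each graph $G \in \{O_n, O^1_n, O^2_n, O^3_n\}$ I single out a distinguished vertex---the unique cut-vertex of the terminal hexagon, or the attachment vertex of the pendant structure---and partition the maximal matchings of $G$ according to which incident edge, if any, matches that vertex. Each subcase is then shown to correspond bijectively to the maximal matchings of one of the smaller graphs $O_{n-1}$, $O^1_{n-1}$, $O^2_{n-1}$, $O^3_{n-1}$, or $O^3_{n-2}$. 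The initial conditions are obtained by direct enumeration on the small base graphs.

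For recurrence $(i)$, let $v$ be the unique cut-vertex of the terminal hexagon $H_n$ of $O_n$. In any maximal matching $M$ either $v$ is matched by an edge lying entirely in $O_{n-1}$, contributing $o_{n-1}$ after $H_n$ has been handled, or $v$ is matched by one of the two edges of $H_n$ incident to it, each of which leaves exactly the auxiliary data encoded by $O^1_{n-1}$ in the remainder. Since the two edges of $H_n$ at $v$ are symmetric, the total is $o_n = 2o^1_{n-1} + o_{n-1}$. The same template, applied at the pendant vertex or partial-hexagon attachment point of each auxiliary graph, yields recurrences $(ii)$ and $(iv)$; the partial hexagons in $O^1_n$, $O^2_n$, $O^3_n$ serve precisely as bookkeeping devices that record which matching configurations on the ``would-be'' next hexagon are consistent with maximality of $M$.

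The main obstacle is recurrence $(iii)$ for $o^2_n$, which features the two-step lookback $2o^3_{n-2}$ not present in the analogous lemmas for $P_n$ or $M_n$. The source of this term is the ortho geometry: the two cut-vertices of each internal hexagon are \emph{adjacent}, so certain matching configurations on the terminal hexagon of $O^2_n$ propagate their coverage effects into hexagon $n-1$ as well, forcing the recursion to appeal to hexagons $1$ through $n-2$ rather than $n-1$. The delicate point is to identify the correct residual graph---here $O^3_{n-2}$---that encodes the surviving coverage constraints, and to account for the factor of $2$ coming from two symmetric completions of the matching on the resulting two-hexagon ``tail''. Once this case has been correctly isolated, the remaining contributions $o^3_{n-1} + o^2_{n-1} + o_{n-1}$ follow by the same template used for $(i)$, $(ii)$, and $(iv)$. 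Finally, the initial values $o_0 = 1$, $o^1_0 = 2$, $o^2_0 = 1$, $o^2_1 = 7$, $o^3_0 = 3$ are verified by direct enumeration; the extra base value $o^2_1$ is necessary because $(iii)$ depends on $o^3_{n-2}$.
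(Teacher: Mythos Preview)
Your proposal is correct and follows essentially the same approach as the paper: the paper explicitly states that these recursions are obtained by casework on how the terminal hexagon is covered in a maximal matching, using the auxiliary graphs as bookkeeping devices, and that the details of these computations are omitted. Your outline of that casework---partitioning on the edge saturating the distinguished cut-vertex, identifying the residual graph in each case, and noting that the ortho adjacency of cut-vertices forces the two-step lookback $2o^3_{n-2}$ in $(iii)$---is exactly the intended argument, spelled out in somewhat more detail than the paper itself provides.
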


\begin{theorem} \label{cactigen}
Let $P(x)$, $M(x)$, and $O(x)$ be the ordinary generating functions for the sequences $p_n$, $m_n$, and $o_n$, respectively. Then\\

$(i)$ 
$$
P(x) = \frac{1 + 4x^2}{1 - 5x +4x^2-4x^3},
$$

$(ii)$
$$
M(x) = \frac{1 - x - 2x^2}{1-6x+3x^2-2x^3},
$$

$(iii)$
$$
O(x) = \frac{1 + x + x^2}{1 - 4x - 4x^2 - x^3}.
$$
\end{theorem}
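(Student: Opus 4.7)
The plan is to convert each of the three four-equation systems of recurrences supplied by the preceding lemmas into a linear system of four equations in the four unknown ordinary generating functions, and then solve for the one of interest. All three cases follow the same template, so it is enough to describe the polyspiro family in detail.

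Set $P(x) = \sum_{n \ge 0} p_n x^n$ and $P_i(x) = \sum_{n \ge 0} p^i_n x^n$ for $i = 1,2,3$. Each recurrence in the polyspiro lemma is valid only for $n \ge 1$, so I would multiply by $x^n$, sum over $n \ge 1$, and absorb the missing $n=0$ term using the prescribed initial values $p_0 = 1$, $p^1_0 = 2$, $p^2_0 = 1$, $p^3_0 = 3$. The outcome is a $4 \times 4$ linear system over $\mathbb{Q}(x)$ with polynomial coefficients. The most economical route appears to be: use (iv) to express $P_3$ as a rational function of $P$, then use (ii) and (iii) to eliminate $P_2$ and $P_1$ in favour of $P$, and finally substitute into the image of (i). Collecting terms yields a single linear equation in $P(x)$, whose solution should be the stated rational function. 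Cramer's rule would give the same answer with more determinant work.

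The meta case is mechanically identical; the only change in the system is the extra $m^2_n$ term appearing in the image of (iv), which merely shuffles coefficients. The ortho case has one genuinely new feature: recurrence (iii) contains the term $o^3_{n-2}$, which contributes $x^2 O_3(x)$ to the generating-function equation rather than $x O_3(x)$, and is therefore valid only for $n \ge 2$; accordingly, the extra initial condition $o^2_1 = 7$ is needed to correct for the missing $n = 1$ term. I do not expect substantive difficulties beyond this clerical bookkeeping, which the authors themselves indicate they are omitting. A prudent sanity check at the end is to expand each claimed rational function as a Taylor series and compare the first several coefficients with $p_n$, $m_n$, $o_n$ computed directly from the recurrences of the lemmas.
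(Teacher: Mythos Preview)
Your proposal is correct and matches the paper's own approach: the authors explicitly state that they introduce generating functions for each auxiliary sequence, convert the recurrences into a linear system, and solve, omitting the computational details. Your write-up in fact supplies more of those details (the elimination order, the handling of the $n\ge 2$ validity range for the ortho recurrence (iii), and the Taylor-series sanity check) than the paper itself does.
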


\noindent Since $P(x)$, $M(x)$, and $O(x)$ are rational functions, we can conclude that the numbers $p_n$, $m_n$, and $o_n$ each satisfy a third order linear recurrence with constant coefficients. The initial conditions can be verified by direct computations.

\begin{corollary}\ \\

$(i)$ $p_n = 5p_{n-1}-4p_{n-2}+4p_{n-3}$\\

\noindent with initial conditions $p_0 = 1$, $p_1 = 5$, $p_2 = 25$,\\

$(ii)$ $m_n = 6m_{n-1} - 3m_{n-2} + 2m_{n-3}$\\

\noindent with initial conditions $m_0 = 1$, $m_1 = 5$, $m_2 = 25$,\\

$(iii)$ $o_n = 4o_{n-1} + 4o_{n-2} + o_{n-3}$\\

\noindent with initial conditions $o_0 = 1$, $o_1 = 5$, $o_2 = 25$.
\end{corollary}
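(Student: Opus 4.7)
The plan is to read each recurrence directly off the denominator of the corresponding generating function in Theorem~\ref{cactigen}; the corollary is essentially a transcription of Theorem~\ref{cactigen} into recursive form, so no new enumeration is needed once the generating functions are in hand.

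For part~(i), I would clear the denominator to rewrite the identity as
$$
(1 - 5x + 4x^2 - 4x^3)\,P(x) \;=\; 1 + 4x^2.
$$
Since the right-hand side is a polynomial of degree $2$, for every $n \geq 3$ the coefficient of $x^n$ on the right vanishes, which yields
$$
p_n - 5p_{n-1} + 4p_{n-2} - 4p_{n-3} \;=\; 0,
$$
i.e.\ recurrence~(i). Parts~(ii) and~(iii) follow by the same manipulation applied to $M(x)$ and $O(x)$: the degree-$3$ denominators $1 - 6x + 3x^2 - 2x^3$ and $1 - 4x - 4x^2 - x^3$ translate directly into the claimed third-order recurrences, and the numerators, each of degree at most~$2$, contribute nothing to coefficients of $x^n$ for $n \geq 3$.

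To confirm the initial values $p_0=m_0=o_0=1$, $p_1=m_1=o_1=5$, and $p_2=m_2=o_2=25$, I would either expand each generating function as a power series through order $x^2$ by short long-division, or argue directly: the empty graph admits exactly one (empty) maximal matching, a single hexagon $C_6$ admits two perfect matchings together with the three pairs of opposite edges for a total of five, and the values at $n=2$ can be read off by applying the auxiliary-graph recurrences of Lemmas~3.1--3.3 (or by expanding the generating functions to order $x^2$). Since the whole argument reduces to polynomial bookkeeping, no real obstacle is expected; the only point to double-check is the sign pattern in the denominators when moving them across the equality, together with the consistency of the initial data with the recurrences themselves.
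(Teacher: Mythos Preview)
Your approach is correct and matches the paper's own argument: the paper simply observes that because $P(x)$, $M(x)$, and $O(x)$ are rational with cubic denominators, the sequences satisfy third-order linear recurrences with constant coefficients read off from those denominators, and checks the initial conditions by direct computation. Your write-up is in fact more explicit than the paper's one-sentence justification, spelling out the clear-the-denominator-and-compare-coefficients mechanism that the paper leaves implicit.
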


\noindent None of the obtained sequences appear in {\em The On-Line Encyclopedia of Integer Sequences} \cite{oeis}.

Now we can apply a version of Darboux's theorem to deduce the asymptotic behavior of the sequences $p_n$, $m_n$, and $o_n$. We refer the reader to any of standard books on generating functions, such as \cite{bgw, wilf} for more information on these techniques.

\begin{theorem}[Darboux]
Let $f(x) = \sum _{n=0}^\infty a_nx^n$ denote the ordinary generating function of a sequence $a_n$. If $f(x)$ can be written as
$$
f(x) = \left( 1 - \frac{x}{w} \right)^\alpha g(x),
$$
where $w$ is the smallest modulus singularity of $f$ and $g$ is analytic at $w$, then
$$
a_n \sim \frac{g(w)}{\Gamma (-\alpha)} w^{-n}n^{-\alpha - 1}.
$$
Here $\Gamma (x)$ denotes the gamma function.
\end{theorem}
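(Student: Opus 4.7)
The plan is to treat the singular factor and the analytic factor separately. First I would obtain the coefficient asymptotics of the pure singular piece $(1-x/w)^\alpha$, and then show that multiplying by the analytic function $g$ simply substitutes $g(w)$ into the leading asymptotic while introducing only strictly smaller correction terms. Throughout, the hypothesis that $w$ is the singularity of $f$ of smallest modulus together with the analyticity of $g$ at $w$ means $g$ extends analytically to a disk of radius strictly larger than $|w|$ around the origin, punctured only at $w$ itself; this extra room is what makes the coefficient extraction work.

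For the singular factor, the generalized binomial theorem gives
$$
[x^n]\left(1 - \frac{x}{w}\right)^\alpha \;=\; (-1)^n w^{-n}\binom{\alpha}{n} \;=\; \frac{w^{-n}\,\Gamma(n-\alpha)}{n!\,\Gamma(-\alpha)},
$$
and Stirling's formula yields $\Gamma(n-\alpha)/n! \sim n^{-\alpha-1}$, so
$$
[x^n]\left(1 - \frac{x}{w}\right)^\alpha \;\sim\; \frac{w^{-n} n^{-\alpha-1}}{\Gamma(-\alpha)}.
$$
This already reproduces the target asymptotic in the case $g\equiv 1$, and pins down the appearance of $\Gamma(-\alpha)$ in the denominator.

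To reduce the general case to this one, I would exploit the analyticity of $g$ by writing $g(x) = g(w) + (x-w)\,g_1(x)$ with $g_1$ analytic in a neighborhood of $w$. Substituting yields
$$
f(x) \;=\; g(w)\left(1 - \frac{x}{w}\right)^\alpha \;-\; w\left(1 - \frac{x}{w}\right)^{\alpha+1} g_1(x),
$$
so the first summand already contributes the claimed leading asymptotic $g(w)\,w^{-n}n^{-\alpha-1}/\Gamma(-\alpha)$, while the second summand has the same form as $f$ but with exponent $\alpha+1$ in place of $\alpha$. The main obstacle is rigorously justifying that this ``remainder'' contributes only $O(w^{-n}n^{-\alpha-2})$, which is of strictly smaller order. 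The cleanest way is through a Hankel-type contour integral representation $a_n = \frac{1}{2\pi i}\oint f(x)\,x^{-n-1}\,dx$, deforming the circle of integration past $|x|=|w|$ and shrinking it onto a keyhole around $w$; this is legitimate precisely because $g$ (and hence the remainder term) is analytic in a slightly larger punctured disk. Estimating the keyhole contribution using the local expansion above, and bounding the rest of the contour by $c\,R^{-n}$ for some $R>|w|$, gives both the leading asymptotic and the required error bound, completing the proof.
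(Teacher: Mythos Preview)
The paper does not prove this statement at all: Darboux's theorem is quoted as a classical result and the reader is referred to standard texts on generating functions (the references to Bender--Williamson and Wilf). So there is no ``paper's own proof'' to compare your proposal against.

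That said, your outline is the standard singularity-analysis argument and is essentially sound. The binomial/Stirling computation for the pure factor $(1-x/w)^\alpha$ is correct, and the decomposition $g(x)=g(w)+(x-w)g_1(x)$ together with $(x-w)=-w(1-x/w)$ does reduce the problem to a leading term plus a remainder with exponent $\alpha+1$. The one place where you are slightly glib is the sentence claiming that analyticity of $g$ at $w$ forces $g$ to extend to a disk about the origin of radius strictly greater than $|w|$: analyticity of $g$ \emph{at} $w$ only gives a small disk around $w$, and to push the Cauchy contour past $|x|=|w|$ you also need that $w$ is the \emph{only} singularity of $f$ on that circle (or else you must sum contributions from all of them). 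The paper's statement tacitly assumes this, and your Hankel-contour step does too; just be aware that this hypothesis is doing real work and should be stated explicitly in a rigorous write-up.
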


\begin{corollary}\ \\

$(i)$ $p_n \sim 1.37804 \cdot 4.28428^n$,\\

$(ii)$ $m_n \sim 0.81408 \cdot 5.52233^n$,\\

$(iii)$ $o_n \sim 1.05177 \cdot 4.86454^n$.

\end{corollary}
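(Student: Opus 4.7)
The plan is to apply the version of Darboux's theorem stated above to each of the three ordinary generating functions $P(x)$, $M(x)$, and $O(x)$ from Theorem~\ref{cactigen}. Each of these is a rational function with numerator of degree at most two and a cubic denominator, so its only singularities are the three zeros of the denominator.

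The first step is, for each of the three denominators $1-5x+4x^2-4x^3$, $1-6x+3x^2-2x^3$, and $1-4x-4x^2-x^3$, to compute the three zeros numerically and verify that there is a unique zero $w$ of smallest modulus, that $w$ is real and positive, and that the other two zeros have strictly larger absolute value. This makes $w$ an isolated simple pole of the corresponding generating function, which is exactly the hypothesis needed to apply Darboux's theorem with $\alpha=-1$.

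The second step is to put each generating function into the form required by Darboux. Performing the polynomial division $D(x) = (1 - x/w)\,Q(x)$ for some quadratic $Q$ and setting $g(x) = N(x)/Q(x)$, the function $g$ is analytic at $w$ and
$$
f(x) = \left(1 - \frac{x}{w}\right)^{-1} g(x).
$$
With $\alpha=-1$ we have $\Gamma(-\alpha)=\Gamma(1)=1$ and $n^{-\alpha-1}=1$, so Darboux's theorem yields $a_n \sim g(w)\,w^{-n}$. It remains only to evaluate $g(w) = N(w)/Q(w)$ to the required precision in each of the three cases; this produces the bases $4.28428$, $5.52233$, $4.86454$ and the corresponding leading constants $1.37804$, $0.81408$, $1.05177$ stated in parts $(i)$--$(iii)$.

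The only real obstacle is the numerical verification in the first step: one must confirm that the smallest-modulus root of each cubic is unique and strictly dominated by the other two in absolute value, so that the dominant singularity truly is an isolated simple pole. For the three specific cubics above this is routine: after locating the real root $w$ numerically, Vieta's formula determines the product of the remaining two roots, and comparing $|w|$ with the modulus of that pair (which turns out to be a complex-conjugate pair of modulus roughly five to ten times $|w|$ in each case) settles the point. Everything else is mechanical substitution.
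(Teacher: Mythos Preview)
Your proposal is correct and is precisely the approach the paper intends: the corollary is stated immediately after Darboux's theorem with no further argument, so applying that theorem to each rational generating function with $\alpha=-1$ at the unique smallest-modulus pole is exactly what is expected. You have in fact supplied more detail than the paper does, including the verification via Vieta that the remaining conjugate pair of roots has larger modulus in each case.
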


The characteristic equations of the three recurrences can be solved exactly,
but the resulting formulas tend to be too cumbersome to be of any use. 
The equation for meta-chains, however, allows a compact formula for the
smallest (and the only) positive root: it is equal to $\frac{1}{2}(1 +
\sqrt[3]3 - \sqrt [3]9)$.

The obtained asymptotics suggest that meta-chains could be the richest and
para-chains the poorest in maximal matchings among all polyspiro chains of the
same length. In the next subsection we prove that this is, indeed, the case.

\subsection{Extremal structures} \label{secextremecacti}

\begin{theorem} \label{extremecacti}
Let $G_n$ be a hexagonal cactus of length $n$. Then
$$
\Psi (P_n) \le  \Psi (G_n) \le \Psi (M_n).
$$
\end{theorem}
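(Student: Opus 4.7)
The plan is an induction on $n$ combined with a local exchange argument. For $n \le 2$ the three cacti $P_n$, $M_n$, $O_n$ coincide, so the base case is trivial. For $n \ge 3$, given an arbitrary $G_n$ that is not already $M_n$, I would pick an internal hexagon $H$ of $G_n$ that is not meta, let $G'_n$ be the cactus obtained from $G_n$ by replacing $H$ with a meta-hexagon at the same position, and show $\Psi(G_n) \le \Psi(G'_n)$. Iterating this move converts $G_n$ into $M_n$ step by step and proves the upper bound. The lower bound is proved by the symmetric sequence of exchanges toward para-hexagons.

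To compare $\Psi(G_n)$ with $\Psi(G'_n)$, I would classify maximal matchings of the cactus by their local behavior at the two cut-vertices $u,v$ of $H$. Each such vertex is in one of finitely many ``states'' (matched inside $H$, matched outside $H$, or unmatched with all neighbors matched elsewhere). Once the states $(\sigma_u,\sigma_v)$ are fixed, the cactus decomposes and the count factors as
\[
\Psi(G_n) \;=\; \sum_{\sigma_u,\sigma_v} N_H(\sigma_u,\sigma_v)\, N_{G_n \setminus H}(\sigma_u,\sigma_v),
\]
where the outside factor $N_{G_n \setminus H}$ depends on the surrounding cactus but not on the type of $H$. It therefore suffices to establish the pointwise inequality $N_{H_{\mathrm{para}}} \le N_{H_{\mathrm{ortho}}} \le N_{H_{\mathrm{meta}}}$ (or at least that meta dominates and para is dominated by the other two types) for every choice of boundary states. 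The auxiliary counts $p^i_n$, $m^i_n$, $o^i_n$ introduced in Section~\ref{seccactigen} encode exactly the information needed to handle the outside factor, so the hard part is concentrated in the local comparison.

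The main obstacle is that maximality is a non-local constraint, so the ``state'' vocabulary must be rich enough to make the factorization above legitimate. Concretely, one must distinguish, for each cut-vertex, not only whether it is covered by the matching but also whether its outside neighbors are covered, since this governs the admissibility of the local configuration. Once the vocabulary is pinned down, the values $N_H(\sigma_u,\sigma_v)$ are small integers that can be computed exhaustively for each of the three hexagon types, and the required pointwise inequalities reduce to a finite case check. I expect this last step to be conceptually straightforward but tedious, and it is where the bulk of the verification effort lies.
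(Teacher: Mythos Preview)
Your exchange-and-factor framework is the right instinct, and the paper's proof is built on the same idea. The gap is in your final step: the pointwise inequality $N_{H_{\mathrm{para}}}(\sigma_u,\sigma_v) \le N_{H_{\mathrm{meta}}}(\sigma_u,\sigma_v)$ is \emph{false}. Take the state where both cut-vertices $u,v$ are unmatched (so every neighbor of $u$ and of $v$ must be matched). In a para-hexagon ($u,v$ at distance~3) the four remaining vertices split into two disjoint edges, giving exactly one admissible configuration. In a meta-hexagon ($u,v$ at distance~2) the common neighbor of $u$ and $v$ has no legal partner, so there are zero admissible configurations; the same happens for ortho. Hence for this state para strictly exceeds both meta and ortho, and neither direction of your pointwise comparison holds. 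The finite case check you anticipate being ``straightforward but tedious'' would in fact produce these counterexamples.

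What the paper does is absorb this failure by \emph{not} relying on a pure local comparison. It factors at the cut-vertex between $h_{n-2}$ and $h_{n-1}$, writes $\Psi(CP),\Psi(CO),\Psi(CM)$ as linear combinations of the outside quantities $\Psi(H_{n-2}-S)$ (for $S\in\{\{c\},\{b,c\},\{c,i\},\{a,b,c,i,j\}\}$), and then uses two inequalities among these outside terms---the subgraph monotonicity $\Psi(H_{n-2}-c)\ge \Psi(H_{n-2}-\{a,b,c,i,j\})$ and the slightly deeper Lemma~\ref{subgraph2}---to trade coefficient deficits in one state against surpluses in another. In particular, the para coefficient exceeds the meta coefficient on the ``all neighbors matched'' state (5 versus 3 in the paper's count), exactly mirroring the counterexample above, and the excess is absorbed via $\Psi(H_{n-2}-c)\ge \Psi(H_{n-2}-\{a,b,c,i,j\})$. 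So your factorization can be made to work, but only after you supplement it with structural inequalities among the outside factors; the transfer-matrix entries alone do not dominate each other.
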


Let $G_m$ be an arbitrary hexagonal cactus of length $m$. Observe that we can always draw $G_m$ as in Figure \ref{cactus}, where $h_m$ is a terminal hexagon and the hexagon adjacent to the left of $h_{m-1}$ may attach at any of the vertices $b, a, k, j,$ or $i$. Let us assume the hexagons of $G_m$ are labeled $h_1, \ldots, h_m$ according to their ordering in Figure \ref{cactus} where ($h_1$ is the other terminal hexagon).

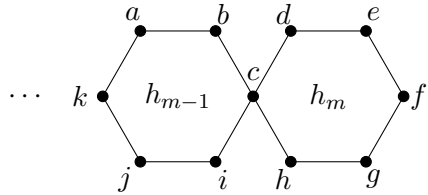
\begin{figure}[h]
\begin{center}
\begin{tikzpicture}[scale=1]

\draw (-.5,.87) circle (2pt) [fill=black];
\draw (.5,.87) circle (2pt) [fill=black];
\draw (.5,-.87) circle (2pt) [fill=black];
\draw (-.5,-.87) circle (2pt) [fill=black];
\draw (1,0) circle (2pt) [fill=black];
\draw (-1,0) circle (2pt) [fill=black];
\draw (1,0)--(.5,.87);
\draw (-.5,.87)--(.5,.87);
\draw (-.5,.87)--(-1,0);
\draw (-.5,-.87)--(-1,0);
\draw (-.5,-.87)--(.5,-.87);
\draw (1,0)--(.5,-.87);

\draw (1.5,.87) circle (2pt) [fill=black];
\draw (2.5,.87) circle (2pt) [fill=black];
\draw (2.5,-.87) circle (2pt) [fill=black];
\draw (1.5,-.87) circle (2pt) [fill=black];
\draw (3,0) circle (2pt) [fill=black];
\draw (3,0)--(2.5,.87);
\draw (1.5,.87)--(2.5,.87);
\draw (1.5,.87)--(1,0);
\draw (1.5,-.87)--(1,0);
\draw (1.5,-.87)--(2.5,-.87);
\draw (3,0)--(2.5,-.87);

\draw (0,0) node {$h_{m-1}$};
\draw (2,0) node {$h_{m}$};
\draw (-2,0) node {$\cdots$};

\draw (-.6,1.1) node {$a$};
\draw (.6,1.1) node {$b$};
\draw (1,.3) node {$c$};
\draw (1.4,1.1) node {$d$};
\draw (2.6,1.1) node {$e$};
\draw (3.2,0) node {$f$};
\draw (-.7,-1.1) node {$j$};
\draw (.6,-1.1) node {$i$};
\draw (1.4,-1.1) node {$h$};
\draw (2.6,-1.1) node {$g$};
\draw (-1.3,0) node {$k$};

\end{tikzpicture}
\end{center}
\caption{A terminal hexagon, $h_m$, and its adjacent hexagon, $h_{m-1}$, in the hexagonal chain cactus $G_m$.} \label{cactus}
\end{figure}

In what follows, for $1 \le \ell, p \le m$ let $H_\ell$ be the subgraph of $G_m$ induced by the vertices of the hexagons $h_1, \ldots, h_\ell$ and let $H_{\ell,p}$ denote the subgraph of $G_m$ induced by the vertices of the two hexagons $h_\ell$ and $h_p$. We will need the following lemmas. The proof of the first lemma is immediate.

\begin{lemma} \label{subgraph}
If $H$ is a subgraph of the graph $G$, then $\Psi(H) \le \Psi (G)$.
\end{lemma}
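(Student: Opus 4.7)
The plan is to exhibit an explicit injection $f\colon \mathrm{MM}(H)\hookrightarrow \mathrm{MM}(G)$, where $\mathrm{MM}(\cdot)$ denotes the set of maximal matchings, via greedy extension. First I would observe that any maximal matching $M$ of $H$ is also a matching of $G$, since $E(H)\subseteq E(G)$ and a matching is just a set of pairwise non-adjacent edges. By a standard greedy procedure (repeatedly adjoining any edge of $G$ not sharing a vertex with the current matching), $M$ extends to some maximal matching of $G$; fix any such extension and call it $f(M)=M^{*}$. This defines a function $f\colon\mathrm{MM}(H)\to\mathrm{MM}(G)$.

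The key step is showing that $f$ is injective, and for this I would verify that $M$ can be recovered from $M^{*}$ by the simple formula $M^{*}\cap E(H)=M$. The containment $M\subseteq M^{*}\cap E(H)$ is immediate from $M\subseteq M^{*}$ and $M\subseteq E(H)$. For the reverse, suppose $e\in M^{*}\cap E(H)$. Since $M\subseteq M^{*}$ and $M^{*}$ is a matching of $G$, the edge $e$ shares no vertex with any edge of $M$; hence $M\cup\{e\}$ is a matching whose edges all lie in $E(H)$, i.e., a matching of $H$. Maximality of $M$ in $H$ forces $e\in M$, giving $M^{*}\cap E(H)\subseteq M$. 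Therefore $f(M_1)=f(M_2)$ implies $M_1=f(M_1)\cap E(H)=f(M_2)\cap E(H)=M_2$, so $f$ is injective and $\Psi(H)=|\mathrm{MM}(H)|\le|\mathrm{MM}(G)|=\Psi(G)$.

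There is essentially no obstacle here, which is why the authors call the proof immediate: the whole argument hinges on the defining feature of a maximal matching, namely that no edge of the ambient graph may be adjoined without destroying the matching property. Because this defining property is phrased inside $H$, every edge of $H$ added during the extension to $G$ would contradict maximality in $H$, so the extension introduces only edges of $E(G)\setminus E(H)$, and the restriction to $E(H)$ inverts $f$. Note that the argument does not require $V(H)=V(G)$, so it applies verbatim to vertex-deleted subgraphs such as the ones $H_\ell$ and $H_{\ell,p}$ used later in the extremal analysis.
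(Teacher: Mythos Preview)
Your argument is correct and is precisely the standard fleshing-out of what the paper leaves implicit by calling the proof ``immediate'': extend a maximal matching of $H$ greedily to one of $G$, and observe that restricting back to $E(H)$ recovers the original because any extra edge of $H$ in the extension would have contradicted maximality in $H$. There is nothing to add; the paper gives no explicit proof to compare against.
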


\begin{lemma} \label{cactiedges}
Any maximal matching in $G_m$ must contain exactly one of the edges $cb, cd, ch,$ or $ci$, or the maximal matching must contain all the edges $ab, de, ji,$ and $hg$. 
\end{lemma}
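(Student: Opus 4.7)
I would prove this by a case analysis on whether the cut-vertex $c$ shared by the hexagons $h_{m-1}$ and $h_m$ is saturated by $M$. The central structural fact I will use is that $c$ has degree exactly $4$ in $G_m$, with its four incident edges being precisely $cb$, $cd$, $ch$, $ci$ (two coming from each of the two hexagons meeting at $c$).

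First I would dispose of the easy case: if $c \in V(M)$, then $M$ being a matching forces exactly one edge of $M$ to be incident to $c$, and by the structural fact this edge lies in $\{cb, cd, ch, ci\}$. This immediately yields the first alternative of the lemma.

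Next I would handle the case $c \notin V(M)$. Maximality forces every neighbor of $c$ to be saturated --- otherwise the edge from $c$ to an unmatched neighbor could be appended to $M$, contradicting maximality. So $b, d, h, i \in V(M)$. Because $h_m$ is a terminal hexagon, its vertices $d, e, f, g, h$ have no neighbors outside $h_m$; in particular, $d$'s only neighbors in $G_m$ are $c$ and $e$, so the edge of $M$ at $d$ must be $de$. The symmetric argument gives $hg \in M$, and analogous reasoning inside $h_{m-1}$ is intended to force $ab, ji \in M$, producing the second alternative.

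The main obstacle I anticipate is precisely this final step. When $h_{m-2}$ attaches to $h_{m-1}$ at $a$, $k$, or $j$, each of $b$ and $i$ has only two neighbors in $G_m$ (namely $\{a,c\}$ and $\{j,c\}$), so the two-neighbor argument used for $d$ and $h$ transfers verbatim. The delicate possibility is that $h_{m-2}$ attaches at $b$ or $i$ itself: then the attaching vertex acquires two additional neighbors in $h_{m-2}$, and one must rule out matching $b$ (or $i$) into $h_{m-2}$ by a secondary maximality argument that tracks the forced status of nearby vertices of $h_{m-1}$ --- specifically, propagating the obligatory saturation around $a$, $k$ (and, symmetrically, $j$, $k$) given that $j$ (resp.\ $a$) is already used by the forced edge $ji$ (resp.\ $ab$) on the opposite side. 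This is the place I would spend the most care in writing the details.
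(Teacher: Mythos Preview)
Your approach is essentially the direct version of the paper's proof by contradiction: both arguments reduce to showing that if $M$ contains none of $cb,cd,ch,ci$ then $c$ is unmatched, hence all of $b,d,h,i$ are matched, and from this one reads off $de,hg\in M$ (since $d,h$ have degree~$2$) and then tries to read off $ab,ji\in M$. The paper simply writes ``say $ab$ is missing; then add $bc$'', implicitly assuming $b$ has no neighbor outside $\{a,c\}$, so it is the same argument with the delicate case suppressed rather than addressed.

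You are right to flag the case where $h_{m-2}$ attaches at $b$ (or $i$) as the real obstacle, but the ``secondary maximality argument'' you propose cannot succeed, because the lemma is actually \emph{false} in that case. Concretely, take $m=3$ with $h_1$ attached to $h_2$ at $b$, and write the cycle $h_1$ as $b\,p\,q\,r\,s\,t$. Then
\[
M=\{bp,\;qr,\;st,\;ak,\;ij,\;de,\;hg\}
\]
is a maximal matching of $G_3$: the only unmatched vertices are $c$ and $f$, and every neighbor of each is matched. Yet $M$ contains none of $cb,cd,ch,ci$, and $ab\notin M$, so neither alternative of the lemma holds. Any propagation you carry out around $a,k,j$ will merely recover the edges $ak$ and $ij$ already present in this $M$; it will not force $b$ back onto $a$. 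Thus the step you single out for ``the most care'' is not a detail to be filled in but a genuine counterexample to the statement as written. The paper's own proof has the same gap; it sidesteps rather than resolves this case.
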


\begin{proof}
Take a maximal matching $M$ in $G_m$. For sake of contradiction, suppose that $M$ does not contain any of the edges $cb$, $cd$, $ch$, or $ci$ and that $M$ does not contain all of the edges $ab$, $de$, $ji$, and $hg$. Then at least one of the edges $ab$, $de$, $ji$, and $hg$ is missing, say $ab$. Since $ab$ is not in $M$, then we can add the edge $bc$ to $M$, which is a contradiction to the fact that $M$ is a maximal matching. The lemma follows.
\end{proof}

\begin{lemma} \label{subgraph2}
For the subgraph $H_{m-1}$ of $G_m$, at least one of the following holds:

$(i)$ $2 \cdot \Psi (H_{m-1} - \{b,c\}) \ge \Psi (H_{m-1} - c)$

$(ii)$ $2 \cdot \Psi (H_{m-1} - \{c,i\}) \ge \Psi (H_{m-1} - c)$
\end{lemma}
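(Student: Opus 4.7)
My approach is to prove inequality (i) under the assumption that $b$ is a pendant vertex of $G := H_{m-1} - c$. The secondary cut vertex of $h_{m-1}$ (the one shared with $h_{m-2}$ when $m \geq 3$) is a single vertex distinct from $c$, so it can coincide with at most one of $b, i$; at least one of the two therefore has $a$ (respectively $j$) as its sole $G$-neighbor. Without loss of generality, $b$ is pendant with unique neighbor $a$; the identical argument with $(b,a)$ replaced by $(i,j)$ handles the remaining case and produces (ii).

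Partition $\mathcal{M}(G)$ into $S = \{M : ab \in M\}$ and $T = \mathcal{M}(G) \setminus S$. Every $M \in T$ has $a$ matched, for otherwise the pendant edge $ab$ could be appended. Standard pendant-vertex bijections identify $T$ with $\{M' \in \mathcal{M}(H_{m-1} - \{b, c\}) : a \text{ matched in } M'\}$ (just delete the pendant), and identify the subset
$$
S_{\max} := \{M \in S : \text{every } G\text{-neighbor of } a \text{ other than } b \text{ is matched in } M\}
$$
with $\{M' \in \mathcal{M}(H_{m-1} - \{b, c\}) : a \text{ unmatched in } M'\}$ (remove $ab$ and then $b$). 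These yield
$$
\Psi(H_{m-1} - \{b, c\}) = |S_{\max}| + |T|, \quad \Psi(H_{m-1} - c) = |S_{\max}| + |S \setminus S_{\max}| + |T|.
$$

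The decisive step is an injection $\phi : S \setminus S_{\max} \hookrightarrow T$. Given $M \in S \setminus S_{\max}$, pick any $G$-neighbor $u \ne b$ of $a$ that is unmatched in $M$, and set $\phi(M) := (M \setminus \{ab\}) \cup \{au\}$. This reassigns $a$'s partner from $b$ to $u$, and is invertible because $u$ is recovered as the unique partner of $a$ in $\phi(M)$. I expect the main technical obstacle to be verifying that $\phi(M)$ is still a \emph{maximal} matching: the swap alters only the match status of $a$ (still matched), $b$ (now unmatched, but with sole neighbor $a$ matched), and $u$ (now matched). For every other edge of $G$, a matched endpoint in $M$ remains matched in $\phi(M)$, so no new augmenting edge can arise.

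With the injection established, $|S \setminus S_{\max}| \le |T|$, and therefore
$$
\Psi(H_{m-1} - c) = |S_{\max}| + |S \setminus S_{\max}| + |T| \le |S_{\max}| + 2|T| \le 2(|S_{\max}| + |T|) = 2\Psi(H_{m-1} - \{b, c\}),
$$
which is inequality (i).
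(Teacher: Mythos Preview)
Your proof is correct and shares the opening reduction with the paper: the secondary cut vertex of $h_{m-1}$ cannot equal both $b$ and $i$, so at least one of them is pendant in $G=H_{m-1}-c$, and by symmetry one may take it to be $b$. From that point on the two arguments diverge. The paper splits the maximal matchings of $H_{m-1}-c$ according to whether $ab$ is present, identifies the two pieces with $\mathcal{M}(H_{m-1}-\{a,b,c\})$ and (an injective image inside) $\mathcal{M}(H_{m-1}-\{b,c\})$, and then invokes the subgraph monotonicity Lemma~\ref{subgraph} to bound $\Psi(H_{m-1}-\{a,b,c\})\le\Psi(H_{m-1}-\{b,c\})$. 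You instead compute $\Psi(H_{m-1}-\{b,c\})$ \emph{exactly} as $|S_{\max}|+|T|$ via the two pendant-vertex bijections, and then produce an explicit edge-swap injection $S\setminus S_{\max}\hookrightarrow T$ to get $|S\setminus S_{\max}|\le|T|$. The paper's route is shorter because it recycles Lemma~\ref{subgraph}; yours is self-contained and gives a cleaner bijective explanation of the factor $2$. One small remark: ``pick any $u$'' should be read as fixing a deterministic choice (e.g.\ the least unmatched neighbour in a fixed vertex order), which makes $\phi$ a well-defined function; your recovery argument $M=(\phi(M)\setminus\{au\})\cup\{ab\}$ then establishes injectivity regardless of which rule is used. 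In all cases except when $h_{m-2}$ attaches at $a$, the only candidate is $u=k$ anyway.
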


\begin{proof}
The proof depends on where the hexagon $h_{m-2}$ attaches to $h_{m-1}$. By symmetry, suppose that $h_{m-2}$ attaches at either $i, j,$ or $k$ (the case $a,b,k$ is similar). Consider a maximal matching of $H_{m-1}-c$. If such a matching contains the edge $ab$, then the remaining edges give a maximal matching of $H_{m-1} - \{a,b,c\}$. If a maximal matching does not contain the edge $ab$, then the matching must also be maximal in the graph $H_{m-1} - \{b,c\}$. Thus by Lemma \ref{subgraph} we have
\begin{align*}
\Psi (H_{m-1} - \{c\}) &= \Psi (H_{m-1} - \{a,b,c\}) + \Psi (H_{m-1} - \{b,c\}) \\
&\le 2 \cdot \Psi (H_{m-1} - \{b,c\}).
\end{align*}
\end{proof} 

\begin{proof}[Proof (of Theorem \ref{extremecacti})]
Take a hexagonal cactus $C$ of length $n-1$. Let us set $m=n-1$ and suppose that $C$ is drawn as in Figure \ref{cactus} with vertices labeled as such, so that we may refer to this picture to aid this proof. We consider three cases of extending $C$ by an $n$th hexagon $h_n$.

\begin{figure}[h]
\begin{center}
\begin{tikzpicture}[scale=1]

\draw (-.5,.87) circle (2pt) [fill=black];
\draw (.5,.87) circle (2pt) [fill=black];
\draw (.5,-.87) circle (2pt) [fill=black];
\draw (-.5,-.87) circle (2pt) [fill=black];
\draw (1,0) circle (2pt) [fill=black];
\draw (-1,0) circle (2pt) [fill=black];
\draw (1,0)--(.5,.87);
\draw (-.5,.87)--(.5,.87);
\draw (-.5,.87)--(-1,0);
\draw (-.5,-.87)--(-1,0);
\draw (-.5,-.87)--(.5,-.87);
\draw (1,0)--(.5,-.87);

\draw (1.5,.87) circle (2pt) [fill=black];
\draw (2.5,.87) circle (2pt) [fill=black];
\draw (2.5,-.87) circle (2pt) [fill=black];
\draw (1.5,-.87) circle (2pt) [fill=black];
\draw (3,0) circle (2pt) [fill=black];
\draw (3,0)--(2.5,.87);
\draw (1.5,.87)--(2.5,.87);
\draw (1.5,.87)--(1,0);
\draw (1.5,-.87)--(1,0);
\draw (1.5,-.87)--(2.5,-.87);
\draw (3,0)--(2.5,-.87);

\draw (0,0) node {$h_{n-2}$};
\draw (2,0) node {$h_{n-1}$};
\draw (4,0) node {$h_{n}$};
\draw (-2,0) node {$\cdots$};

\draw (-.6,1.1) node {$a$};
\draw (.6,1.1) node {$b$};
\draw (1,.3) node {$c$};
\draw (1.4,1.1) node {$d$};
\draw (2.6,1.1) node {$e$};
\draw (3,.3) node {$f$};
\draw (-.7,-1.1) node {$j$};
\draw (.6,-1.1) node {$i$};
\draw (1.4,-1.1) node {$h$};
\draw (2.6,-1.1) node {$g$};
\draw (-1.3,0) node {$k$};

\draw (3.5,.87) circle (2pt) [fill=black];
\draw (4.5,.87) circle (2pt) [fill=black];
\draw (4.5,-.87) circle (2pt) [fill=black];
\draw (3.5,-.87) circle (2pt) [fill=black];
\draw (5,0) circle (2pt) [fill=black];
\draw (5,0)--(4.5,.87);
\draw (3.5,.87)--(4.5,.87);
\draw (3.5,.87)--(3,0);
\draw (3.5,-.87)--(3,0);
\draw (3.5,-.87)--(4.5,-.87);
\draw (5,0)--(4.5,-.87);

\end{tikzpicture}
\end{center}
\caption{The hexagonal cactus CP.} \label{CP}
\end{figure}
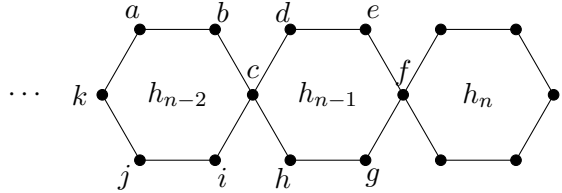

{\bf Case 1.} The hexagon $h_n$ attaches in the para position to the vertex $f$ and let us denote the resulting graph by $CP$, see Figure \ref{CP}. To compute $\Psi (CP)$ we make use of Lemma \ref{cactiedges}. Consider maximal matchings in $CP$ containing the edge $bc$. The remaining edges of the matching must be a maximal matching of $H_{n-2}-\{b,c\}$ and a maximal matching of $H_{n-1,n}-c$. By direct counting, we find that $\Psi (H_{n-1,n}-c) = 11$ and hence, the number of maximal matchings containing the edge $bc$ is $11\cdot \Psi(H_{n-2}-\{b,c\})$. We count the maximal matchings containing the edges $ci, cd,$ or $ch$ as well as the maximal matchings containing all the edges $ab, de, ji,$ and $hg$ similarly, to obtain

\begin{align*}
\Psi (CP) = & 11(\Psi(H_{n-2}-\{b,c\}) + \Psi(H_{n-2}-\{c,i\})) + 20\cdot \Psi (H_{n-2} - c) \\
&+ 5\cdot \Psi (H_{n-2}-\{a,b,c,i,j\}).
\end{align*}

\begin{figure}[h]
\begin{center}
\begin{tikzpicture}[scale=1]

\draw (-.5,.87) circle (2pt) [fill=black];
\draw (.5,.87) circle (2pt) [fill=black];
\draw (.5,-.87) circle (2pt) [fill=black];
\draw (-.5,-.87) circle (2pt) [fill=black];
\draw (1,0) circle (2pt) [fill=black];
\draw (-1,0) circle (2pt) [fill=black];
\draw (1,0)--(.5,.87);
\draw (-.5,.87)--(.5,.87);
\draw (-.5,.87)--(-1,0);
\draw (-.5,-.87)--(-1,0);
\draw (-.5,-.87)--(.5,-.87);
\draw (1,0)--(.5,-.87);

\draw (1.5,.87) circle (2pt) [fill=black];
\draw (2.5,.87) circle (2pt) [fill=black];
\draw (2.5,-.87) circle (2pt) [fill=black];
\draw (1.5,-.87) circle (2pt) [fill=black];
\draw (3,0) circle (2pt) [fill=black];
\draw (3,0)--(2.5,.87);
\draw (1.5,.87)--(2.5,.87);
\draw (1.5,.87)--(1,0);
\draw (1.5,-.87)--(1,0);
\draw (1.5,-.87)--(2.5,-.87);
\draw (3,0)--(2.5,-.87);

\draw (0,0) node {$h_{n-2}$};
\draw (2,0) node {$h_{n-1}$};
\draw (3,1.74) node {$h_{n}$};
\draw (-2,0) node {$\cdots$};

\draw (-.6,1.1) node {$a$};
\draw (.6,1.1) node {$b$};
\draw (1,.3) node {$c$};
\draw (1.4,1.1) node {$d$};
\draw (2.6,1.1) node {$e$};
\draw (3,.3) node {$f$};
\draw (-.7,-1.1) node {$j$};
\draw (.6,-1.1) node {$i$};
\draw (1.4,-1.1) node {$h$};
\draw (2.6,-1.1) node {$g$};
\draw (-1.3,0) node {$k$};

\draw (2.5,2.61) circle (2pt) [fill=black];
\draw (3.5,2.61) circle (2pt) [fill=black];
\draw (3.5,.87) circle (2pt) [fill=black];
\draw (2.5,.87) circle (2pt) [fill=black];
\draw (4,1.74) circle (2pt) [fill=black];
\draw (2,1.74) circle (2pt) [fill=black];
\draw (4,1.74)--(3.5,2.61);
\draw (2.5,2.61)--(3.5,2.61);
\draw (2.5,2.61)--(2,1.74);
\draw (2.5,.87)--(2,1.74);
\draw (2.5,.87)--(3.5,.87);
\draw (4,1.74)--(3.5,.87);

\end{tikzpicture}
\end{center}
\caption{The hexagonal cactus CM.} \label{CM}
\end{figure}
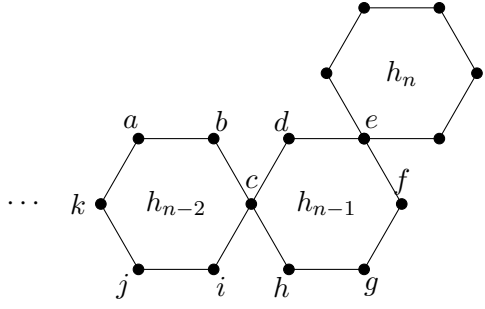

{\bf Case 2.} The hexagon $h_n$ attaches in the meta position to the vertex $e$ and let us denote the resulting graph by $CM$, see Figure \ref{CM}. Counting similarly to Case 1 above we obtain
\begin{align*}
\Psi (CM) = & 17(\Psi(H_{n-2}-\{b,c\}) + \Psi(H_{n-2}-\{c,i\})) + 22\cdot \Psi (H_{n-2} - c) \\
&+ 3\cdot \Psi (H_{n-2}-\{a,b,c,i,j\}).
\end{align*}

\begin{figure}[h]
\begin{center}
\begin{tikzpicture}[scale=1]

\draw (-.5,.87) circle (2pt) [fill=black];
\draw (.5,.87) circle (2pt) [fill=black];
\draw (.5,-.87) circle (2pt) [fill=black];
\draw (-.5,-.87) circle (2pt) [fill=black];
\draw (1,0) circle (2pt) [fill=black];
\draw (-1,0) circle (2pt) [fill=black];
\draw (1,0)--(.5,.87);
\draw (-.5,.87)--(.5,.87);
\draw (-.5,.87)--(-1,0);
\draw (-.5,-.87)--(-1,0);
\draw (-.5,-.87)--(.5,-.87);
\draw (1,0)--(.5,-.87);

\draw (1.5,.87) circle (2pt) [fill=black];
\draw (2.5,.87) circle (2pt) [fill=black];
\draw (2.5,-.87) circle (2pt) [fill=black];
\draw (1.5,-.87) circle (2pt) [fill=black];
\draw (3,0) circle (2pt) [fill=black];
\draw (3,0)--(2.5,.87);
\draw (1.5,.87)--(2.5,.87);
\draw (1.5,.87)--(1,0);
\draw (1.5,-.87)--(1,0);
\draw (1.5,-.87)--(2.5,-.87);
\draw (3,0)--(2.5,-.87);

\draw (0,0) node {$h_{n-2}$};
\draw (2,0) node {$h_{n-1}$};
\draw (1.5,1.87) node {$h_{n}$};
\draw (-2,0) node {$\cdots$};

\draw (-.6,1.1) node {$a$};
\draw (.6,1.1) node {$b$};
\draw (1,.3) node {$c$};
\draw (1.5,1.2) node {$d$};
\draw (2.6,1.1) node {$e$};
\draw (3,.3) node {$f$};
\draw (-.7,-1.1) node {$j$};
\draw (.6,-1.1) node {$i$};
\draw (1.4,-1.1) node {$h$};
\draw (2.6,-1.1) node {$g$};
\draw (-1.3,0) node {$k$};

\draw (1.5,.87) circle (2pt) [fill=black];
\draw (.63,1.37) circle (2pt) [fill=black];
\draw (2.37,1.37) circle (2pt) [fill=black];
\draw (.63,2.37) circle (2pt) [fill=black];
\draw (2.37,2.37) circle (2pt) [fill=black];
\draw (1.5,2.87) circle (2pt) [fill=black];
\draw (1.5,.87)--(.63,1.37);
\draw (1.5,.87)--(2.37,1.37);
\draw (.63,1.37)--(.63,2.37);
\draw (1.5,2.87)--(.63,2.37);
\draw (1.5,2.87)--(2.37,2.37);
\draw (2.37,2.37)--(2.37,1.37);

\end{tikzpicture}
\end{center}
\caption{The hexagonal cactus CO.} \label{CO}
\end{figure}
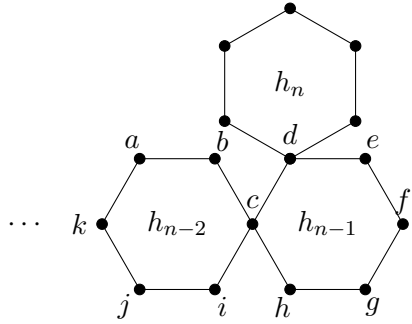

{\bf Case 3.} The hexagon $h_n$ attaches in the ortho position to the vertex $d$ and let us denote the resulting graph by $CO$, see Figure \ref{CO}. Counting as in Cases 1 and 2,
\begin{align*}
\Psi (CO) = & 15(\Psi(H_{n-2}-\{b,c\}) + \Psi(H_{n-2}-\{c,i\})) + 18\cdot \Psi (H_{n-2} - c)\\
&+ 3\cdot \Psi (H_{n-2}-\{a,b,c,i,j\}).
\end{align*}

Now $\Psi (CM) \ge \Psi (CO)$ follows immediately by comparing terms. By Lemma \ref{subgraph}, we have $\Psi (H_{n-2} - c) \ge \Psi (H_{n-2}-\{a,b,c,i,j\})$ and by comparing the remaining terms we see that $\Psi (CM) \ge \Psi (CP)$. The preceding shows that attaching a hexagon in the meta position yields the most maximal matchings, implying
$$
\Psi (G_n) \le \Psi (M_n)
$$
as desired. 

To get the remaining inequality of our theorem, we need only show that $\Psi (CO) \ge \Psi (CP)$. Now we must have either $(i)$ or $(ii)$ of Lemma \ref{subgraph2}, say $(i)$ holds. Then $4\cdot \Psi(H_{n-2}-\{b,c\}) \ge 2 \cdot \Psi (H_{n-2} - c)$ and by Lemma \ref{subgraph} we have $\Psi(H_{n-2}-\{c,i\}) \ge \Psi (H_{n-2}-\{a,b,c,i,j\})$, showing that
\begin{align}
\Psi (CO) &\ge 11\Psi(H_{n-2}-\{b,c\}) + 13\Psi(H_{n-2}-\{c,i\}) + 20\cdot \Psi (H_{n-2} - c) \notag \\
&+ 5\cdot \Psi (H_{n-2}-\{a,b,c,i,j\}). \label{ineqCO}
\end{align}
\noindent Now by comparing the terms of $\Psi (CP)$ with the inequality $(\ref{ineqCO})$, it follows that $\Psi (CO) \ge \Psi (CP)$, which completes the proof.
\end{proof}

It is instructive to compare the above results with the corresponding results
for all matchings and for independent sets from reference \cite{doslicmaloy} 
(Theorems 3.23 and 4.14, respectively). It can be seen that with respect 
to the richest chains, the number of
maximal matchings behaves more like the number of independent sets than the
number of all matchings. A possible explanation might be the fact that
maximal matchings in any graph $G$ are in a bijective correspondence with 
{\em nice} independent sets in $G$. (A set of vertices $S$ is {\em nice} if
$G-S$ has a perfect matching.)

\section{Benzenoid chains}

\subsection{Generating functions} \label{secbenzgen}

Now we turn our attention to benzenoid chains. Here the connectivity increases
to two, and one can expect that this will result in longer recurrences, as
indicated in \cite{doszub2}. This is, indeed, the case.

Using the same techniques outlined in subsection \ref{seccactigen}, we obtain ordinary generating functions for the number of maximal matchings in the benzenoid chains $L_n$, $Z_n$, and $H_n$. 

\begin{figure}[h]
\begin{center}
\begin{tikzpicture}[scale=.4]
\draw (-2.5,0) node {$L_n$};
\draw (.87,-.5) circle (2pt) [fill=black];
\draw (.87,.5) circle (2pt) [fill=black];
\draw (-.87,.5) circle (2pt) [fill=black];
\draw (-.87,-.5) circle (2pt) [fill=black];
\draw (0,1) circle (2pt) [fill=black];
\draw (0,-1) circle (2pt) [fill=black];
\draw (0,1)--(.87,.5);
\draw (.87,-.5)--(.87,.5);
\draw (.87,-.5)--(0,-1);
\draw (-.87,-.5)--(0,-1);
\draw (-.87,-.5)--(-.87,.5);
\draw (0,1)--(-.87,.5);

\draw (2.61,.5) circle (2pt) [fill=black];
\draw (2.61,-.5) circle (2pt) [fill=black];
\draw (1.74,1) circle (2pt) [fill=black];
\draw (1.74,-1) circle (2pt) [fill=black];
\draw (2.61,-.5)--(1.74,-1);
\draw (2.61,-.5)--(2.61,.5);
\draw (1.74,1)--(2.61,.5);
\draw (1.74,1)--(.87,.5);
\draw (1.74,-1)--(.87,-.5);

\draw (4.35,.5) circle (2pt) [fill=black];
\draw (4.35,-.5) circle (2pt) [fill=black];
\draw (3.48,1) circle (2pt) [fill=black];
\draw (3.48,-1) circle (2pt) [fill=black];
\draw (4.35,-.5)--(3.48,-1);
\draw (4.35,-.5)--(4.35,.5);
\draw (3.48,1)--(4.35,.5);
\draw (3.48,1)--(2.61,.5);
\draw (3.48,-1)--(2.61,-.5);

\draw (6.09,.5) circle (2pt) [fill=black];
\draw (6.09,-.5) circle (2pt) [fill=black];
\draw (5.22,1) circle (2pt) [fill=black];
\draw (5.22,-1) circle (2pt) [fill=black];
\draw (6.09,-.5)--(5.22,-1);
\draw (6.09,-.5)--(6.09,.5);
\draw (5.22,1)--(6.09,.5);
\draw (5.22,1)--(4.35,.5);
\draw (5.22,-1)--(4.35,-.5);

\draw (7.83,.5) circle (2pt) [fill=black];
\draw (7.83,-.5) circle (2pt) [fill=black];
\draw (6.96,1) circle (2pt) [fill=black];
\draw (6.96,-1) circle (2pt) [fill=black];
\draw (7.83,-.5)--(6.96,-1);
\draw (7.83,-.5)--(7.83,.5);
\draw (6.96,1)--(7.83,.5);
\draw (6.96,1)--(6.09,.5);
\draw (6.96,-1)--(6.09,-.5);

\draw (0,0) node {$1$};
\draw (1.74,0) node {$2$};
\draw (3.48,0) node {$\cdots$};
\draw (6.96,0) node {$n$};
\end{tikzpicture}
\hspace{.5cm}
\begin{tikzpicture}[scale=.4]
\draw (-2.5,0) node {$L^1_n$};
\draw (.87,-.5) circle (2pt) [fill=black];
\draw (.87,.5) circle (2pt) [fill=black];
\draw (-.87,.5) circle (2pt) [fill=black];
\draw (-.87,-.5) circle (2pt) [fill=black];
\draw (0,1) circle (2pt) [fill=black];
\draw (0,-1) circle (2pt) [fill=black];
\draw (0,1)--(.87,.5);
\draw (.87,-.5)--(.87,.5);
\draw (.87,-.5)--(0,-1);
\draw (-.87,-.5)--(0,-1);
\draw (-.87,-.5)--(-.87,.5);
\draw (0,1)--(-.87,.5);

\draw (2.61,.5) circle (2pt) [fill=black];
\draw (2.61,-.5) circle (2pt) [fill=black];
\draw (1.74,1) circle (2pt) [fill=black];
\draw (1.74,-1) circle (2pt) [fill=black];
\draw (2.61,-.5)--(1.74,-1);
\draw (2.61,-.5)--(2.61,.5);
\draw (1.74,1)--(2.61,.5);
\draw (1.74,1)--(.87,.5);
\draw (1.74,-1)--(.87,-.5);

\draw (4.35,.5) circle (2pt) [fill=black];
\draw (4.35,-.5) circle (2pt) [fill=black];
\draw (3.48,1) circle (2pt) [fill=black];
\draw (3.48,-1) circle (2pt) [fill=black];
\draw (4.35,-.5)--(3.48,-1);
\draw (4.35,-.5)--(4.35,.5);
\draw (3.48,1)--(4.35,.5);
\draw (3.48,1)--(2.61,.5);
\draw (3.48,-1)--(2.61,-.5);

\draw (6.09,.5) circle (2pt) [fill=black];
\draw (6.09,-.5) circle (2pt) [fill=black];
\draw (5.22,1) circle (2pt) [fill=black];
\draw (5.22,-1) circle (2pt) [fill=black];
\draw (6.09,-.5)--(5.22,-1);
\draw (6.09,-.5)--(6.09,.5);
\draw (5.22,1)--(6.09,.5);
\draw (5.22,1)--(4.35,.5);
\draw (5.22,-1)--(4.35,-.5);

\draw (7.83,.5) circle (2pt) [fill=black];
\draw (7.83,-.5) circle (2pt) [fill=black];
\draw (6.96,1) circle (2pt) [fill=black];
\draw (6.96,-1) circle (2pt) [fill=black];
\draw (7.83,-.5)--(6.96,-1);
\draw (7.83,-.5)--(7.83,.5);
\draw (6.96,1)--(7.83,.5);
\draw (6.96,1)--(6.09,.5);
\draw (6.96,-1)--(6.09,-.5);

\draw (8.7,1) circle (2pt) [fill=black];
\draw (8.7,-1) circle (2pt) [fill=black];
\draw (8.7,1)--(7.83,.5);
\draw (8.7,-1)--(7.83,-.5);

\draw (0,0) node {$1$};
\draw (1.74,0) node {$2$};
\draw (3.48,0) node {$\cdots$};
\draw (6.96,0) node {$n$};
\end{tikzpicture}

\vspace{.5cm}

\begin{tikzpicture}[scale=.4]
\draw (-2.5,0) node {$L^2_n$};
\draw (.87,-.5) circle (2pt) [fill=black];
\draw (.87,.5) circle (2pt) [fill=black];
\draw (-.87,.5) circle (2pt) [fill=black];
\draw (-.87,-.5) circle (2pt) [fill=black];
\draw (0,1) circle (2pt) [fill=black];
\draw (0,-1) circle (2pt) [fill=black];
\draw (0,1)--(.87,.5);
\draw (.87,-.5)--(.87,.5);
\draw (.87,-.5)--(0,-1);
\draw (-.87,-.5)--(0,-1);
\draw (-.87,-.5)--(-.87,.5);
\draw (0,1)--(-.87,.5);

\draw (2.61,.5) circle (2pt) [fill=black];
\draw (2.61,-.5) circle (2pt) [fill=black];
\draw (1.74,1) circle (2pt) [fill=black];
\draw (1.74,-1) circle (2pt) [fill=black];
\draw (2.61,-.5)--(1.74,-1);
\draw (2.61,-.5)--(2.61,.5);
\draw (1.74,1)--(2.61,.5);
\draw (1.74,1)--(.87,.5);
\draw (1.74,-1)--(.87,-.5);

\draw (4.35,.5) circle (2pt) [fill=black];
\draw (4.35,-.5) circle (2pt) [fill=black];
\draw (3.48,1) circle (2pt) [fill=black];
\draw (3.48,-1) circle (2pt) [fill=black];
\draw (4.35,-.5)--(3.48,-1);
\draw (4.35,-.5)--(4.35,.5);
\draw (3.48,1)--(4.35,.5);
\draw (3.48,1)--(2.61,.5);
\draw (3.48,-1)--(2.61,-.5);

\draw (6.09,.5) circle (2pt) [fill=black];
\draw (6.09,-.5) circle (2pt) [fill=black];
\draw (5.22,1) circle (2pt) [fill=black];
\draw (5.22,-1) circle (2pt) [fill=black];
\draw (6.09,-.5)--(5.22,-1);
\draw (6.09,-.5)--(6.09,.5);
\draw (5.22,1)--(6.09,.5);
\draw (5.22,1)--(4.35,.5);
\draw (5.22,-1)--(4.35,-.5);

\draw (7.83,.5) circle (2pt) [fill=black];
\draw (7.83,-.5) circle (2pt) [fill=black];
\draw (6.96,1) circle (2pt) [fill=black];
\draw (6.96,-1) circle (2pt) [fill=black];
\draw (7.83,-.5)--(6.96,-1);
\draw (7.83,-.5)--(7.83,.5);
\draw (6.96,1)--(7.83,.5);
\draw (6.96,1)--(6.09,.5);
\draw (6.96,-1)--(6.09,-.5);

\draw (9.57,.5) circle (2pt) [fill=black];
\draw (8.7,1) circle (2pt) [fill=black];
\draw (8.7,-1) circle (2pt) [fill=black];
\draw (8.7,1)--(9.57,.5);
\draw (8.7,1)--(7.83,.5);
\draw (8.7,-1)--(7.83,-.5);

\draw (0,0) node {$1$};
\draw (1.74,0) node {$2$};
\draw (3.48,0) node {$\cdots$};
\draw (6.96,0) node {$n$};
\end{tikzpicture}
\hspace{.5cm}
\begin{tikzpicture}[scale=.4]
\draw (-2.5,0) node {$L^3_n$};
\draw (.87,-.5) circle (2pt) [fill=black];
\draw (.87,.5) circle (2pt) [fill=black];
\draw (-.87,.5) circle (2pt) [fill=black];
\draw (-.87,-.5) circle (2pt) [fill=black];
\draw (0,1) circle (2pt) [fill=black];
\draw (0,-1) circle (2pt) [fill=black];
\draw (0,1)--(.87,.5);
\draw (.87,-.5)--(.87,.5);
\draw (.87,-.5)--(0,-1);
\draw (-.87,-.5)--(0,-1);
\draw (-.87,-.5)--(-.87,.5);
\draw (0,1)--(-.87,.5);

\draw (2.61,.5) circle (2pt) [fill=black];
\draw (2.61,-.5) circle (2pt) [fill=black];
\draw (1.74,1) circle (2pt) [fill=black];
\draw (1.74,-1) circle (2pt) [fill=black];
\draw (2.61,-.5)--(1.74,-1);
\draw (2.61,-.5)--(2.61,.5);
\draw (1.74,1)--(2.61,.5);
\draw (1.74,1)--(.87,.5);
\draw (1.74,-1)--(.87,-.5);

\draw (4.35,.5) circle (2pt) [fill=black];
\draw (4.35,-.5) circle (2pt) [fill=black];
\draw (3.48,1) circle (2pt) [fill=black];
\draw (3.48,-1) circle (2pt) [fill=black];
\draw (4.35,-.5)--(3.48,-1);
\draw (4.35,-.5)--(4.35,.5);
\draw (3.48,1)--(4.35,.5);
\draw (3.48,1)--(2.61,.5);
\draw (3.48,-1)--(2.61,-.5);

\draw (6.09,.5) circle (2pt) [fill=black];
\draw (6.09,-.5) circle (2pt) [fill=black];
\draw (5.22,1) circle (2pt) [fill=black];
\draw (5.22,-1) circle (2pt) [fill=black];
\draw (6.09,-.5)--(5.22,-1);
\draw (6.09,-.5)--(6.09,.5);
\draw (5.22,1)--(6.09,.5);
\draw (5.22,1)--(4.35,.5);
\draw (5.22,-1)--(4.35,-.5);

\draw (7.83,.5) circle (2pt) [fill=black];
\draw (7.83,-.5) circle (2pt) [fill=black];
\draw (6.96,1) circle (2pt) [fill=black];
\draw (6.96,-1) circle (2pt) [fill=black];
\draw (7.83,-.5)--(6.96,-1);
\draw (7.83,-.5)--(7.83,.5);
\draw (6.96,1)--(7.83,.5);
\draw (6.96,1)--(6.09,.5);
\draw (6.96,-1)--(6.09,-.5);

\draw (8.7,1) circle (2pt) [fill=black];
\draw (8.7,1)--(7.83,.5);

\draw (0,0) node {$1$};
\draw (1.74,0) node {$2$};
\draw (3.48,0) node {$\cdots$};
\draw (6.96,0) node {$n$};
\end{tikzpicture}

\end{center}
\caption{Auxiliary graphs for $L_n$.} \label{auxlinear}
\end{figure}
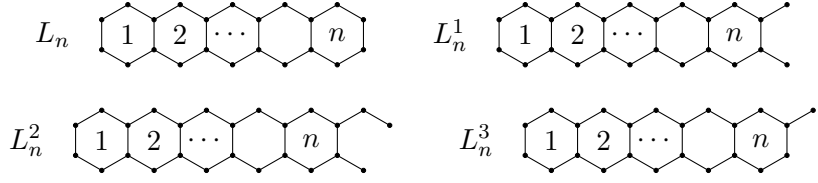

\begin{lemma} Let $\ell_n$ be the number of maximal matchings in $L_n$ and $\ell^i_n$ be the number of maximal matchings in the auxiliary graph $L^i_n$ in Figure \ref{auxlinear}. Then\\

$(i)$ $\ell _n = \ell ^1 _{n-1} + \ell _{n-1} + 2\ell ^2 _{n-2}$, \\

$(ii)$ $\ell ^1 _n = 2 \ell ^1 _{n-1} + \ell _{n-1} + 2\ell ^3 _{n-1}$, \\

$(iii)$ $\ell ^2 _n = \ell ^3 _n + \ell ^1 _{n-1} + \ell ^3 _{n-1}$, \\

$(iv)$ $\ell ^3 _n = \ell ^1 _{n-1} + \ell _{n-1} + \ell ^3 _{n-1} + \ell ^2 _{n-2} + \ell ^1 _{n-2} + \ell ^3 _{n-2}$,\\

\noindent with the initial conditions $\ell_0=1$, $\ell_1=5$, $\ell ^1_0=2$, $\ell ^2_0=3$, $\ell ^3_0=2$, and $\ell ^3_1=7$.
\end{lemma}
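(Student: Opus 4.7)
The plan is to prove each of the four recurrences by case analysis on how a maximal matching $M$ interacts with the rightmost hexagon $h_n$ of the relevant graph (and occasionally with the second-to-last hexagon $h_{n-1}$). The governing principle is that in a maximal matching every vertex is either covered by a matching edge or has all its neighbors covered; otherwise a non-matching edge could be added, contradicting maximality. This local constraint severely restricts the possibilities at the boundary.

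For recurrence (i), label the cyclic vertices of $h_n$ as $s, p, u, v, q, t$, so that $st$ is the edge shared with $h_{n-1}$ and $uv$ is the opposite, rightmost edge. I would partition the maximal matchings of $L_n$ into three cases according to the status of $uv, pu, vq$: (a) $uv \in M$; (b) $uv \notin M$ and $\{pu, vq\} \subset M$; (c) $uv \notin M$ and exactly one of $pu, vq$ lies in $M$. These cases are exhaustive, since if $uv \notin M$ and neither $pu$ nor $vq$ is in $M$, then $u$ and $v$ are mutually uncovered neighbors and $uv$ could be added, contradicting maximality. For (a), deleting $u, v$ yields a graph isomorphic to $L^1_{n-1}$ and the bijection $M \mapsto M \setminus \{uv\}$ gives $\ell^1_{n-1}$. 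For (b), maximality forces $sp, qt \notin M$; removing the four consumed vertices leaves $L_{n-1}$, and the restriction $M \setminus \{pu, vq\}$ is maximal in $L_{n-1}$, giving $\ell_{n-1}$. For (c), by top-bottom symmetry assume $pu \in M$ and $vq \notin M$; maximality at the uncovered vertex $v$ then forces $qt \in M$, so the vertex $t$ is covered externally from the viewpoint of $L_{n-1}$. Unfolding this pre-coverage one hexagon further via sub-case analysis on $h_{n-1}$ yields a bijection with maximal matchings of $L^2_{n-2}$, contributing $\ell^2_{n-2}$; the mirror-image sub-case doubles this to $2\ell^2_{n-2}$.

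Recurrences (ii)--(iv) would be proved by entirely analogous case analysis on the rightmost structure of $L^1_n, L^2_n, L^3_n$, now additionally enumerating the matching states of the attached pendant(s). In each sub-case, maximality at the pendant leaves and at the adjacent vertices of $h_n$ pins down the forced edges, and one identifies the reduced graph (among $L, L^1, L^2, L^3$ at index $n-1$ or $n-2$) corresponding to that sub-case. The main obstacle is precisely the case-(c) style reduction that must look past the last hexagon: establishing the bijection between ``$L_{n-1}$ with a pre-covered right-side vertex'' and $L^2_{n-2}$ requires a second layer of sub-case analysis on $h_{n-1}$ and careful matching of those sub-cases with the configurations of pendants in $L^2_{n-2}$. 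Analogous deeper unfoldings produce the $n-2$ terms in recurrence (iv), and ensuring that the six-term partition in (iv) is complete and disjoint demands methodical bookkeeping.
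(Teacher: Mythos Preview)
Your proposal is correct and follows the same approach the paper uses: the paper states only that ``these recursions can be verified via casework'' and omits all details, so your outlined case analysis on the rightmost hexagon is exactly the intended method.

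One small simplification: in case~(c) of recurrence~(i) you do not actually need a ``second layer of sub-case analysis on $h_{n-1}$.'' Once $pu\in M$ and $qt\in M$ are forced, deleting the five vertices $p,u,v,q,t$ from $L_n$ leaves precisely the graph $L^2_{n-2}$ (the remaining three vertices $s$ and the two non-shared vertices of $h_{n-1}$ become the length-two pendant and the single pendant attached to $L_{n-2}$), and the bijection $M\mapsto M\setminus\{pu,qt\}$ is immediate in both directions. The same device---forcing edges in the last hexagon and then deleting their endpoints to recognise one of the auxiliary graphs at a lower index---handles the $n-2$ terms in (iv) without the deeper unfolding you anticipate as an obstacle.
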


\begin{figure}[h]
\begin{center}
\begin{tikzpicture}[scale=.4]
\draw (-2.5,0) node {$Z_n$};
\draw (-.5,.87) circle (2pt) [fill=black];
\draw (.5,.87) circle (2pt) [fill=black];
\draw (.5,-.87) circle (2pt) [fill=black];
\draw (-.5,-.87) circle (2pt) [fill=black];
\draw (1,0) circle (2pt) [fill=black];
\draw (-1,0) circle (2pt) [fill=black];
\draw (1,0)--(.5,.87);
\draw (-.5,.87)--(.5,.87);
\draw (-.5,.87)--(-1,0);
\draw (-.5,-.87)--(-1,0);
\draw (-.5,-.87)--(.5,-.87);
\draw (1,0)--(.5,-.87);

\draw (2,0) circle (2pt) [fill=black];
\draw (2.5,-.87) circle (2pt) [fill=black];
\draw (1,-1.74) circle (2pt) [fill=black];
\draw (2,-1.74) circle (2pt) [fill=black];
\draw (1,0)--(2,0);
\draw (2.5,-.87)--(2,0);
\draw (2.5,-.87)--(2,-1.74);
\draw (1,-1.74)--(2,-1.74);
\draw (1,-1.74)--(.5,-.87);

\draw (2.5,.87) circle (2pt) [fill=black];
\draw (3.5,.87) circle (2pt) [fill=black];
\draw (3.5,-.87) circle (2pt) [fill=black];
\draw (4,0) circle (2pt) [fill=black];
\draw (4,0)--(3.5,.87);
\draw (2.5,.87)--(3.5,.87);
\draw (2.5,.87)--(2,0);
\draw (2.5,-.87)--(3.5,-.87);
\draw (4,0)--(3.5,-.87);

\draw (5,0) circle (2pt) [fill=black];
\draw (5.5,-.87) circle (2pt) [fill=black];
\draw (4,-1.74) circle (2pt) [fill=black];
\draw (5,-1.74) circle (2pt) [fill=black];
\draw (4,0)--(5,0);
\draw (5.5,-.87)--(5,0);
\draw (5.5,-.87)--(5,-1.74);
\draw (4,-1.74)--(5,-1.74);
\draw (4,-1.74)--(3.5,-.87);

\draw (5.5,.87) circle (2pt) [fill=black];
\draw (6.5,.87) circle (2pt) [fill=black];
\draw (6.5,-.87) circle (2pt) [fill=black];
\draw (7,0) circle (2pt) [fill=black];
\draw (7,0)--(6.5,.87);
\draw (5.5,.87)--(6.5,.87);
\draw (5.5,.87)--(5,0);
\draw (5.5,-.87)--(6.5,-.87);
\draw (7,0)--(6.5,-.87);

\draw (0,0) node {$1$};
\draw (1.5,-.87) node {$2$};
\draw (3,0) node {$\cdots$};
\draw (6,0) node {$n$};
\end{tikzpicture}
\hspace{.5cm}
\begin{tikzpicture}[scale=.4]
\draw (-2.5,0) node {$Z^1_n$};
\draw (-.5,.87) circle (2pt) [fill=black];
\draw (.5,.87) circle (2pt) [fill=black];
\draw (.5,-.87) circle (2pt) [fill=black];
\draw (-.5,-.87) circle (2pt) [fill=black];
\draw (1,0) circle (2pt) [fill=black];
\draw (-1,0) circle (2pt) [fill=black];
\draw (1,0)--(.5,.87);
\draw (-.5,.87)--(.5,.87);
\draw (-.5,.87)--(-1,0);
\draw (-.5,-.87)--(-1,0);
\draw (-.5,-.87)--(.5,-.87);
\draw (1,0)--(.5,-.87);

\draw (2,0) circle (2pt) [fill=black];
\draw (2.5,-.87) circle (2pt) [fill=black];
\draw (1,-1.74) circle (2pt) [fill=black];
\draw (2,-1.74) circle (2pt) [fill=black];
\draw (1,0)--(2,0);
\draw (2.5,-.87)--(2,0);
\draw (2.5,-.87)--(2,-1.74);
\draw (1,-1.74)--(2,-1.74);
\draw (1,-1.74)--(.5,-.87);

\draw (2.5,.87) circle (2pt) [fill=black];
\draw (3.5,.87) circle (2pt) [fill=black];
\draw (3.5,-.87) circle (2pt) [fill=black];
\draw (4,0) circle (2pt) [fill=black];
\draw (4,0)--(3.5,.87);
\draw (2.5,.87)--(3.5,.87);
\draw (2.5,.87)--(2,0);
\draw (2.5,-.87)--(3.5,-.87);
\draw (4,0)--(3.5,-.87);

\draw (5,0) circle (2pt) [fill=black];
\draw (5.5,-.87) circle (2pt) [fill=black];
\draw (4,-1.74) circle (2pt) [fill=black];
\draw (5,-1.74) circle (2pt) [fill=black];
\draw (4,0)--(5,0);
\draw (5.5,-.87)--(5,0);
\draw (5.5,-.87)--(5,-1.74);
\draw (4,-1.74)--(5,-1.74);
\draw (4,-1.74)--(3.5,-.87);

\draw (5.5,.87) circle (2pt) [fill=black];
\draw (6.5,.87) circle (2pt) [fill=black];
\draw (6.5,-.87) circle (2pt) [fill=black];
\draw (7,0) circle (2pt) [fill=black];
\draw (7,0)--(6.5,.87);
\draw (5.5,.87)--(6.5,.87);
\draw (5.5,.87)--(5,0);
\draw (5.5,-.87)--(6.5,-.87);
\draw (7,0)--(6.5,-.87);

\draw (8,0) circle (2pt) [fill=black];
\draw (7,-1.74) circle (2pt) [fill=black];
\draw (7,0)--(8,0);
\draw (7,-1.74)--(6.5,-.87);

\draw (0,0) node {$1$};
\draw (1.5,-.87) node {$2$};
\draw (3,0) node {$\cdots$};
\draw (6,0) node {$n$};
\end{tikzpicture}

\vspace{.5cm}

\begin{tikzpicture}[scale=.4]
\draw (-2.5,0) node {$Z^2_n$};
\draw (-.5,.87) circle (2pt) [fill=black];
\draw (.5,.87) circle (2pt) [fill=black];
\draw (.5,-.87) circle (2pt) [fill=black];
\draw (-.5,-.87) circle (2pt) [fill=black];
\draw (1,0) circle (2pt) [fill=black];
\draw (-1,0) circle (2pt) [fill=black];
\draw (1,0)--(.5,.87);
\draw (-.5,.87)--(.5,.87);
\draw (-.5,.87)--(-1,0);
\draw (-.5,-.87)--(-1,0);
\draw (-.5,-.87)--(.5,-.87);
\draw (1,0)--(.5,-.87);

\draw (2,0) circle (2pt) [fill=black];
\draw (2.5,-.87) circle (2pt) [fill=black];
\draw (1,-1.74) circle (2pt) [fill=black];
\draw (2,-1.74) circle (2pt) [fill=black];
\draw (1,0)--(2,0);
\draw (2.5,-.87)--(2,0);
\draw (2.5,-.87)--(2,-1.74);
\draw (1,-1.74)--(2,-1.74);
\draw (1,-1.74)--(.5,-.87);

\draw (2.5,.87) circle (2pt) [fill=black];
\draw (3.5,.87) circle (2pt) [fill=black];
\draw (3.5,-.87) circle (2pt) [fill=black];
\draw (4,0) circle (2pt) [fill=black];
\draw (4,0)--(3.5,.87);
\draw (2.5,.87)--(3.5,.87);
\draw (2.5,.87)--(2,0);
\draw (2.5,-.87)--(3.5,-.87);
\draw (4,0)--(3.5,-.87);

\draw (5,0) circle (2pt) [fill=black];
\draw (5.5,-.87) circle (2pt) [fill=black];
\draw (4,-1.74) circle (2pt) [fill=black];
\draw (5,-1.74) circle (2pt) [fill=black];
\draw (4,0)--(5,0);
\draw (5.5,-.87)--(5,0);
\draw (5.5,-.87)--(5,-1.74);
\draw (4,-1.74)--(5,-1.74);
\draw (4,-1.74)--(3.5,-.87);

\draw (5.5,.87) circle (2pt) [fill=black];
\draw (6.5,.87) circle (2pt) [fill=black];
\draw (6.5,-.87) circle (2pt) [fill=black];
\draw (7,0) circle (2pt) [fill=black];
\draw (7,0)--(6.5,.87);
\draw (5.5,.87)--(6.5,.87);
\draw (5.5,.87)--(5,0);
\draw (5.5,-.87)--(6.5,-.87);
\draw (7,0)--(6.5,-.87);

\draw (7,-1.74) circle (2pt) [fill=black];
\draw (8,-1.74) circle (2pt) [fill=black];
\draw (7,-1.74)--(8,-1.74);
\draw (7,-1.74)--(6.5,-.87);

\draw (0,0) node {$1$};
\draw (1.5,-.87) node {$2$};
\draw (3,0) node {$\cdots$};
\draw (6,0) node {$n$};
\end{tikzpicture}
\hspace{.5cm}
\begin{tikzpicture}[scale=.4]
\draw (-2.5,0) node {$Z^3_n$};
\draw (-.5,.87) circle (2pt) [fill=black];
\draw (.5,.87) circle (2pt) [fill=black];
\draw (.5,-.87) circle (2pt) [fill=black];
\draw (-.5,-.87) circle (2pt) [fill=black];
\draw (1,0) circle (2pt) [fill=black];
\draw (-1,0) circle (2pt) [fill=black];
\draw (1,0)--(.5,.87);
\draw (-.5,.87)--(.5,.87);
\draw (-.5,.87)--(-1,0);
\draw (-.5,-.87)--(-1,0);
\draw (-.5,-.87)--(.5,-.87);
\draw (1,0)--(.5,-.87);

\draw (2,0) circle (2pt) [fill=black];
\draw (2.5,-.87) circle (2pt) [fill=black];
\draw (1,-1.74) circle (2pt) [fill=black];
\draw (2,-1.74) circle (2pt) [fill=black];
\draw (1,0)--(2,0);
\draw (2.5,-.87)--(2,0);
\draw (2.5,-.87)--(2,-1.74);
\draw (1,-1.74)--(2,-1.74);
\draw (1,-1.74)--(.5,-.87);

\draw (2.5,.87) circle (2pt) [fill=black];
\draw (3.5,.87) circle (2pt) [fill=black];
\draw (3.5,-.87) circle (2pt) [fill=black];
\draw (4,0) circle (2pt) [fill=black];
\draw (4,0)--(3.5,.87);
\draw (2.5,.87)--(3.5,.87);
\draw (2.5,.87)--(2,0);
\draw (2.5,-.87)--(3.5,-.87);
\draw (4,0)--(3.5,-.87);

\draw (5,0) circle (2pt) [fill=black];
\draw (5.5,-.87) circle (2pt) [fill=black];
\draw (4,-1.74) circle (2pt) [fill=black];
\draw (5,-1.74) circle (2pt) [fill=black];
\draw (4,0)--(5,0);
\draw (5.5,-.87)--(5,0);
\draw (5.5,-.87)--(5,-1.74);
\draw (4,-1.74)--(5,-1.74);
\draw (4,-1.74)--(3.5,-.87);

\draw (5.5,.87) circle (2pt) [fill=black];
\draw (6.5,.87) circle (2pt) [fill=black];
\draw (6.5,-.87) circle (2pt) [fill=black];
\draw (7,0) circle (2pt) [fill=black];
\draw (7,0)--(6.5,.87);
\draw (5.5,.87)--(6.5,.87);
\draw (5.5,.87)--(5,0);
\draw (5.5,-.87)--(6.5,-.87);
\draw (7,0)--(6.5,-.87);

\draw (8,0) circle (2pt) [fill=black];
\draw (7,-1.74) circle (2pt) [fill=black];
\draw (8,-1.74) circle (2pt) [fill=black];
\draw (7,0)--(8,0);
\draw (7,-1.74)--(8,-1.74);
\draw (7,-1.74)--(6.5,-.87);

\draw (0,0) node {$1$};
\draw (1.5,-.87) node {$2$};
\draw (3,0) node {$\cdots$};
\draw (6,0) node {$n$};
\end{tikzpicture}

\vspace{.5cm}

\begin{tikzpicture}[scale=.4]
\draw (-2.5,0) node {$Z^4_n$};
\draw (-.5,.87) circle (2pt) [fill=black];
\draw (.5,.87) circle (2pt) [fill=black];
\draw (.5,-.87) circle (2pt) [fill=black];
\draw (-.5,-.87) circle (2pt) [fill=black];
\draw (1,0) circle (2pt) [fill=black];
\draw (-1,0) circle (2pt) [fill=black];
\draw (1,0)--(.5,.87);
\draw (-.5,.87)--(.5,.87);
\draw (-.5,.87)--(-1,0);
\draw (-.5,-.87)--(-1,0);
\draw (-.5,-.87)--(.5,-.87);
\draw (1,0)--(.5,-.87);

\draw (2,0) circle (2pt) [fill=black];
\draw (2.5,-.87) circle (2pt) [fill=black];
\draw (1,-1.74) circle (2pt) [fill=black];
\draw (2,-1.74) circle (2pt) [fill=black];
\draw (1,0)--(2,0);
\draw (2.5,-.87)--(2,0);
\draw (2.5,-.87)--(2,-1.74);
\draw (1,-1.74)--(2,-1.74);
\draw (1,-1.74)--(.5,-.87);

\draw (2.5,.87) circle (2pt) [fill=black];
\draw (3.5,.87) circle (2pt) [fill=black];
\draw (3.5,-.87) circle (2pt) [fill=black];
\draw (4,0) circle (2pt) [fill=black];
\draw (4,0)--(3.5,.87);
\draw (2.5,.87)--(3.5,.87);
\draw (2.5,.87)--(2,0);
\draw (2.5,-.87)--(3.5,-.87);
\draw (4,0)--(3.5,-.87);

\draw (5,0) circle (2pt) [fill=black];
\draw (5.5,-.87) circle (2pt) [fill=black];
\draw (4,-1.74) circle (2pt) [fill=black];
\draw (5,-1.74) circle (2pt) [fill=black];
\draw (4,0)--(5,0);
\draw (5.5,-.87)--(5,0);
\draw (5.5,-.87)--(5,-1.74);
\draw (4,-1.74)--(5,-1.74);
\draw (4,-1.74)--(3.5,-.87);

\draw (5.5,.87) circle (2pt) [fill=black];
\draw (6.5,.87) circle (2pt) [fill=black];
\draw (6.5,-.87) circle (2pt) [fill=black];
\draw (7,0) circle (2pt) [fill=black];
\draw (7,0)--(6.5,.87);
\draw (5.5,.87)--(6.5,.87);
\draw (5.5,.87)--(5,0);
\draw (5.5,-.87)--(6.5,-.87);
\draw (7,0)--(6.5,-.87);

\draw (8,0) circle (2pt) [fill=black];
\draw (7,-1.74) circle (2pt) [fill=black];
\draw (8,-1.74) circle (2pt) [fill=black];
\draw (8.5,.87) circle (2pt) [fill=black];
\draw (7,0)--(8,0);
\draw (8.5,.87)--(8,0);
\draw (7,-1.74)--(8,-1.74);
\draw (7,-1.74)--(6.5,-.87);

\draw (0,0) node {$1$};
\draw (1.5,-.87) node {$2$};
\draw (3,0) node {$\cdots$};
\draw (6,0) node {$n$};
\end{tikzpicture}
\hspace{.5cm}
\begin{tikzpicture}[scale=.4]
\draw (-2.5,0) node {$Z^5_n$};
\draw (-.5,.87) circle (2pt) [fill=black];
\draw (.5,.87) circle (2pt) [fill=black];
\draw (.5,-.87) circle (2pt) [fill=black];
\draw (-.5,-.87) circle (2pt) [fill=black];
\draw (1,0) circle (2pt) [fill=black];
\draw (-1,0) circle (2pt) [fill=black];
\draw (1,0)--(.5,.87);
\draw (-.5,.87)--(.5,.87);
\draw (-.5,.87)--(-1,0);
\draw (-.5,-.87)--(-1,0);
\draw (-.5,-.87)--(.5,-.87);
\draw (1,0)--(.5,-.87);

\draw (2,0) circle (2pt) [fill=black];
\draw (2.5,-.87) circle (2pt) [fill=black];
\draw (1,-1.74) circle (2pt) [fill=black];
\draw (2,-1.74) circle (2pt) [fill=black];
\draw (1,0)--(2,0);
\draw (2.5,-.87)--(2,0);
\draw (2.5,-.87)--(2,-1.74);
\draw (1,-1.74)--(2,-1.74);
\draw (1,-1.74)--(.5,-.87);

\draw (2.5,.87) circle (2pt) [fill=black];
\draw (3.5,.87) circle (2pt) [fill=black];
\draw (3.5,-.87) circle (2pt) [fill=black];
\draw (4,0) circle (2pt) [fill=black];
\draw (4,0)--(3.5,.87);
\draw (2.5,.87)--(3.5,.87);
\draw (2.5,.87)--(2,0);
\draw (2.5,-.87)--(3.5,-.87);
\draw (4,0)--(3.5,-.87);

\draw (5,0) circle (2pt) [fill=black];
\draw (5.5,-.87) circle (2pt) [fill=black];
\draw (4,-1.74) circle (2pt) [fill=black];
\draw (5,-1.74) circle (2pt) [fill=black];
\draw (4,0)--(5,0);
\draw (5.5,-.87)--(5,0);
\draw (5.5,-.87)--(5,-1.74);
\draw (4,-1.74)--(5,-1.74);
\draw (4,-1.74)--(3.5,-.87);

\draw (5.5,.87) circle (2pt) [fill=black];
\draw (6.5,.87) circle (2pt) [fill=black];
\draw (6.5,-.87) circle (2pt) [fill=black];
\draw (7,0) circle (2pt) [fill=black];
\draw (7,0)--(6.5,.87);
\draw (5.5,.87)--(6.5,.87);
\draw (5.5,.87)--(5,0);
\draw (5.5,-.87)--(6.5,-.87);
\draw (7,0)--(6.5,-.87);

\draw (7,-1.74) circle (2pt) [fill=black];
\draw (7,-1.74)--(6.5,-.87);

\draw (0,0) node {$1$};
\draw (1.5,-.87) node {$2$};
\draw (3,0) node {$\cdots$};
\draw (6,0) node {$n$};
\end{tikzpicture}

\end{center}
\caption{Auxiliary graphs for $Z_n$.} \label{auxzigzag}
\end{figure}
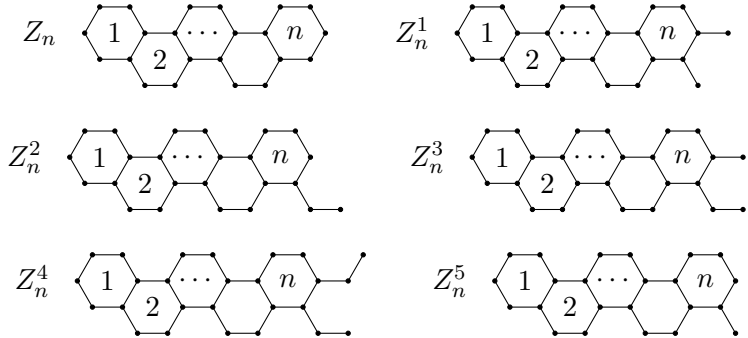

\begin{lemma} Let $z_n$ be the number of maximal matchings in $Z_n$ and $z^i_n$ be the number of maximal matchings in the auxiliary graph $Z^i_n$ in Figure \ref{auxzigzag}. Then\\

$(i)$ $z _n = z^1_{n-1} + z^2 _{n-1} + z^3_{n-2}$, \\

$(ii)$ $z ^1 _n = 2z^2_{n-1} + z^4_{n-2} + z^5_{n-1} + z^3_{n-2} + z^2_{n-2}$, \\

$(iii)$ $z ^2 _n = z_n + z^5_{n-1} + z_{n-1}$, \\

$(iv)$ $z ^3 _n = 2z^2_{n-1} + z^3_{n-1} + z^1_{n-1} + z^5_{n-1}$,\\

$(v)$ $z ^4 _n = z_n + z^5_{n-1} + z_{n-1} + z^2_{n-1} + z^3_{n-1}$,\\

$(vi)$ $z ^5 _n = z^5_{n-1} + z^4_{n-2} + z^2_{n-1} + z^3_{n-2} + z_{n-1}$,\\

\noindent with the initial conditions $z_0=1$, $z_1=5$, $z^1_0=2$, $z^1_1=9$, $z^2_0=2$, $z^3_0=3$, $z^4_0=4$, $z^5_0=2$, and $z^5_1=7$.
\end{lemma}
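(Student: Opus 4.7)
The plan is to establish each recurrence by a systematic case analysis on the rightmost hexagon of the relevant graph. Throughout I would exploit the fundamental property that a matching $M$ is maximal if and only if every edge of the graph has at least one endpoint saturated by $M$; equivalently, the set of $M$-unsaturated vertices is independent and each such vertex has all of its neighbors saturated. This local condition is what lets us encode the interaction between the terminal structure and the rest of the chain by a finite list of auxiliary graphs.

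I would proceed recurrence by recurrence, beginning with (i). In $Z_n$, the hexagon $h_n$ attaches to $h_{n-1}$ at a single vertex $v$; I would case-split on which of $v$'s incident edges (the two inside $h_n$ and the one inside $h_{n-1}$ that shares $v$) belongs to $M$, propagating the maximality constraint around the remaining vertices of $h_n$. After deleting $h_n$ and attaching to $h_{n-1}$ whatever pendant decoration is needed to record the saturation status of the boundary vertices, each case is in bijection with maximal matchings of a previously defined auxiliary graph. The three surviving cases should correspond exactly to $Z^1_{n-1}$, $Z^2_{n-1}$, and $Z^3_{n-2}$, giving (i).

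For (ii)--(vi) the same technique applies, but the starting graph $Z^i_n$ already carries a pendant edge or vertex at its right end, which both enlarges the local casework (some pendant vertices must be saturated, others may or may not be) and occasionally forces us to consume both the terminal and the penultimate hexagons in a single reduction step; this is the source of the $n-2$ terms such as $z^4_{n-2}$ in (ii) and (vi), and of $z^3_{n-2}$ in (i). In each case I would identify the resulting boundary graph with one of the templates $Z, Z^1, \ldots, Z^5$ and read off the coefficient in the recurrence.

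The main obstacle is closure and exhaustiveness of the template list: one must verify that the six auxiliary graphs $Z, Z^1, \ldots, Z^5$ are genuinely closed under the local reductions performed in every case, so that no new pendant configuration ever arises and the system of recurrences actually closes. Once closure is confirmed by examining the finitely many local configurations of the terminal region of a zig-zag, each numerical coefficient in (i)--(vi) becomes a routine count. The initial conditions $z_0,z_1,z^1_0,z^1_1,z^2_0,z^3_0,z^4_0,z^5_0,z^5_1$ are then obtained by direct enumeration on graphs of at most two hexagons, which is tractable by inspection.
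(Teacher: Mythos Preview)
Your overall strategy---case analysis on the terminal hexagon, encoding boundary saturation states via a finite list of auxiliary graphs, and verifying closure---is exactly what the paper intends; indeed the paper omits all details and simply says these recurrences ``can be verified via casework.''

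However, your description of the casework for (i) contains a structural error that would derail the actual computation. You write that in $Z_n$ ``the hexagon $h_n$ attaches to $h_{n-1}$ at a single vertex $v$,'' and you propose to split on which edge incident to $v$ lies in $M$. That is the cactus picture. The graph $Z_n$ is a \emph{benzenoid} chain: adjacent hexagons share an \emph{edge}, not a cut-vertex (see the definitions in Section~2 and Figure~\ref{exbenz2}). Consequently the local interface between $h_n$ and $h_{n-1}$ consists of two shared vertices and the edge between them, and the correct casework is on how $M$ meets that shared edge and the edges adjacent to its two endpoints (compare the analysis around Lemma~\ref{edges} for general benzenoid chains). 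Once you replace the vertex-attachment picture with the edge-attachment one, the reductions to $Z^1_{n-1}$, $Z^2_{n-1}$, $Z^3_{n-2}$, etc., go through as you describe, and the closure and initial-condition checks are routine.
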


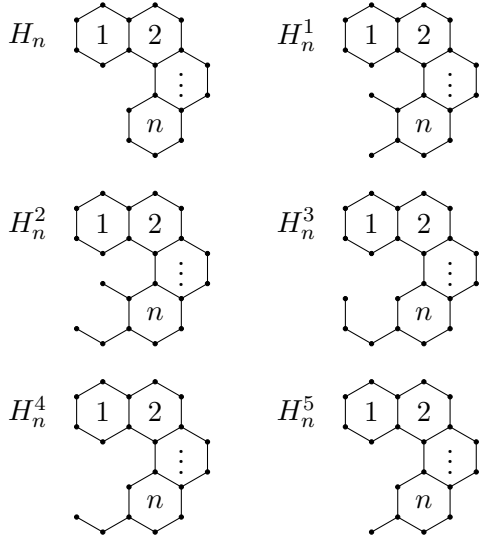
\begin{figure}[h]
\begin{center}
\begin{tikzpicture}[scale=.4]
\draw (-2.5,0) node {$H_n$};
\draw (.87,-.5) circle (2pt) [fill=black];
\draw (.87,.5) circle (2pt) [fill=black];
\draw (-.87,.5) circle (2pt) [fill=black];
\draw (-.87,-.5) circle (2pt) [fill=black];
\draw (0,1) circle (2pt) [fill=black];
\draw (0,-1) circle (2pt) [fill=black];
\draw (0,1)--(.87,.5);
\draw (.87,-.5)--(.87,.5);
\draw (.87,-.5)--(0,-1);
\draw (-.87,-.5)--(0,-1);
\draw (-.87,-.5)--(-.87,.5);
\draw (0,1)--(-.87,.5);

\draw (2.61,.5) circle (2pt) [fill=black];
\draw (2.61,-.5) circle (2pt) [fill=black];
\draw (1.74,1) circle (2pt) [fill=black];
\draw (1.74,-1) circle (2pt) [fill=black];
\draw (2.61,-.5)--(1.74,-1);
\draw (2.61,-.5)--(2.61,.5);
\draw (1.74,1)--(2.61,.5);
\draw (1.74,1)--(.87,.5);
\draw (1.74,-1)--(.87,-.5);

\draw (3.48,-1) circle (2pt) [fill=black];
\draw (3.48,-2) circle (2pt) [fill=black];
\draw (2.61,-2.5) circle (2pt) [fill=black];
\draw (1.74,-2) circle (2pt) [fill=black];
\draw (1.74,-2)--(1.74,-1);
\draw (3.48,-2)--(2.61,-2.5);
\draw (3.48,-2)--(3.48,-1);
\draw (2.61,-.5)--(3.48,-1);
\draw (2.61,-2.5)--(1.74,-2);

\draw (2.61,-3.5) circle (2pt) [fill=black];
\draw (1.74,-4) circle (2pt) [fill=black];
\draw (.87,-2.5) circle (2pt) [fill=black];
\draw (.87,-3.5) circle (2pt) [fill=black];
\draw (.87,-2.5)--(.87,-3.5);
\draw (2.61,-3.5)--(1.74,-4);
\draw (2.61,-3.5)--(2.61,-2.5);
\draw (1.74,-2)--(.87,-2.5);
\draw (1.74,-4)--(.87,-3.5);

\draw (0,0) node {$1$};
\draw (1.74,0) node {$2$};
\draw (2.61,-1.4) node {$\vdots$};
\draw (1.74,-3) node {$n$};
\end{tikzpicture}
\hspace{.5cm}
\begin{tikzpicture}[scale=.4]
\draw (-2.5,0) node {$H^1_n$};
\draw (.87,-.5) circle (2pt) [fill=black];
\draw (.87,.5) circle (2pt) [fill=black];
\draw (-.87,.5) circle (2pt) [fill=black];
\draw (-.87,-.5) circle (2pt) [fill=black];
\draw (0,1) circle (2pt) [fill=black];
\draw (0,-1) circle (2pt) [fill=black];
\draw (0,1)--(.87,.5);
\draw (.87,-.5)--(.87,.5);
\draw (.87,-.5)--(0,-1);
\draw (-.87,-.5)--(0,-1);
\draw (-.87,-.5)--(-.87,.5);
\draw (0,1)--(-.87,.5);

\draw (2.61,.5) circle (2pt) [fill=black];
\draw (2.61,-.5) circle (2pt) [fill=black];
\draw (1.74,1) circle (2pt) [fill=black];
\draw (1.74,-1) circle (2pt) [fill=black];
\draw (2.61,-.5)--(1.74,-1);
\draw (2.61,-.5)--(2.61,.5);
\draw (1.74,1)--(2.61,.5);
\draw (1.74,1)--(.87,.5);
\draw (1.74,-1)--(.87,-.5);

\draw (3.48,-1) circle (2pt) [fill=black];
\draw (3.48,-2) circle (2pt) [fill=black];
\draw (2.61,-2.5) circle (2pt) [fill=black];
\draw (1.74,-2) circle (2pt) [fill=black];
\draw (1.74,-2)--(1.74,-1);
\draw (3.48,-2)--(2.61,-2.5);
\draw (3.48,-2)--(3.48,-1);
\draw (2.61,-.5)--(3.48,-1);
\draw (2.61,-2.5)--(1.74,-2);

\draw (2.61,-3.5) circle (2pt) [fill=black];
\draw (1.74,-4) circle (2pt) [fill=black];
\draw (.87,-2.5) circle (2pt) [fill=black];
\draw (.87,-3.5) circle (2pt) [fill=black];
\draw (.87,-2.5)--(.87,-3.5);
\draw (2.61,-3.5)--(1.74,-4);
\draw (2.61,-3.5)--(2.61,-2.5);
\draw (1.74,-2)--(.87,-2.5);
\draw (1.74,-4)--(.87,-3.5);

\draw (0,-2) circle (2pt) [fill=black];
\draw (0,-4) circle (2pt) [fill=black];
\draw (0,-2)--(.87,-2.5);
\draw (.87,-3.5)--(0,-4);

\draw (0,0) node {$1$};
\draw (1.74,0) node {$2$};
\draw (2.61,-1.4) node {$\vdots$};
\draw (1.74,-3) node {$n$};
\end{tikzpicture}

\vspace{.4cm}

\begin{tikzpicture}[scale=.4]
\draw (-2.5,0) node {$H^2_n$};
\draw (.87,-.5) circle (2pt) [fill=black];
\draw (.87,.5) circle (2pt) [fill=black];
\draw (-.87,.5) circle (2pt) [fill=black];
\draw (-.87,-.5) circle (2pt) [fill=black];
\draw (0,1) circle (2pt) [fill=black];
\draw (0,-1) circle (2pt) [fill=black];
\draw (0,1)--(.87,.5);
\draw (.87,-.5)--(.87,.5);
\draw (.87,-.5)--(0,-1);
\draw (-.87,-.5)--(0,-1);
\draw (-.87,-.5)--(-.87,.5);
\draw (0,1)--(-.87,.5);

\draw (2.61,.5) circle (2pt) [fill=black];
\draw (2.61,-.5) circle (2pt) [fill=black];
\draw (1.74,1) circle (2pt) [fill=black];
\draw (1.74,-1) circle (2pt) [fill=black];
\draw (2.61,-.5)--(1.74,-1);
\draw (2.61,-.5)--(2.61,.5);
\draw (1.74,1)--(2.61,.5);
\draw (1.74,1)--(.87,.5);
\draw (1.74,-1)--(.87,-.5);

\draw (3.48,-1) circle (2pt) [fill=black];
\draw (3.48,-2) circle (2pt) [fill=black];
\draw (2.61,-2.5) circle (2pt) [fill=black];
\draw (1.74,-2) circle (2pt) [fill=black];
\draw (1.74,-2)--(1.74,-1);
\draw (3.48,-2)--(2.61,-2.5);
\draw (3.48,-2)--(3.48,-1);
\draw (2.61,-.5)--(3.48,-1);
\draw (2.61,-2.5)--(1.74,-2);

\draw (2.61,-3.5) circle (2pt) [fill=black];
\draw (1.74,-4) circle (2pt) [fill=black];
\draw (.87,-2.5) circle (2pt) [fill=black];
\draw (.87,-3.5) circle (2pt) [fill=black];
\draw (.87,-2.5)--(.87,-3.5);
\draw (2.61,-3.5)--(1.74,-4);
\draw (2.61,-3.5)--(2.61,-2.5);
\draw (1.74,-2)--(.87,-2.5);
\draw (1.74,-4)--(.87,-3.5);

\draw (-.87,-3.5) circle (2pt) [fill=black];
\draw (0,-2) circle (2pt) [fill=black];
\draw (0,-4) circle (2pt) [fill=black];
\draw (0,-2)--(.87,-2.5);
\draw (.87,-3.5)--(0,-4);
\draw (-.87,-3.5)--(0,-4);

\draw (0,0) node {$1$};
\draw (1.74,0) node {$2$};
\draw (2.61,-1.4) node {$\vdots$};
\draw (1.74,-3) node {$n$};
\end{tikzpicture}
\hspace{.5cm}
\begin{tikzpicture}[scale=.4]
\draw (-2.5,0) node {$H^3_n$};
\draw (.87,-.5) circle (2pt) [fill=black];
\draw (.87,.5) circle (2pt) [fill=black];
\draw (-.87,.5) circle (2pt) [fill=black];
\draw (-.87,-.5) circle (2pt) [fill=black];
\draw (0,1) circle (2pt) [fill=black];
\draw (0,-1) circle (2pt) [fill=black];
\draw (0,1)--(.87,.5);
\draw (.87,-.5)--(.87,.5);
\draw (.87,-.5)--(0,-1);
\draw (-.87,-.5)--(0,-1);
\draw (-.87,-.5)--(-.87,.5);
\draw (0,1)--(-.87,.5);

\draw (2.61,.5) circle (2pt) [fill=black];
\draw (2.61,-.5) circle (2pt) [fill=black];
\draw (1.74,1) circle (2pt) [fill=black];
\draw (1.74,-1) circle (2pt) [fill=black];
\draw (2.61,-.5)--(1.74,-1);
\draw (2.61,-.5)--(2.61,.5);
\draw (1.74,1)--(2.61,.5);
\draw (1.74,1)--(.87,.5);
\draw (1.74,-1)--(.87,-.5);

\draw (3.48,-1) circle (2pt) [fill=black];
\draw (3.48,-2) circle (2pt) [fill=black];
\draw (2.61,-2.5) circle (2pt) [fill=black];
\draw (1.74,-2) circle (2pt) [fill=black];
\draw (1.74,-2)--(1.74,-1);
\draw (3.48,-2)--(2.61,-2.5);
\draw (3.48,-2)--(3.48,-1);
\draw (2.61,-.5)--(3.48,-1);
\draw (2.61,-2.5)--(1.74,-2);

\draw (2.61,-3.5) circle (2pt) [fill=black];
\draw (1.74,-4) circle (2pt) [fill=black];
\draw (.87,-2.5) circle (2pt) [fill=black];
\draw (.87,-3.5) circle (2pt) [fill=black];
\draw (.87,-2.5)--(.87,-3.5);
\draw (2.61,-3.5)--(1.74,-4);
\draw (2.61,-3.5)--(2.61,-2.5);
\draw (1.74,-2)--(.87,-2.5);
\draw (1.74,-4)--(.87,-3.5);

\draw (-.87,-2.5) circle (2pt) [fill=black];
\draw (-.87,-3.5) circle (2pt) [fill=black];
\draw (0,-4) circle (2pt) [fill=black];
\draw (.87,-3.5)--(0,-4);
\draw (-.87,-3.5)--(0,-4);
\draw (-.87,-3.5)--(-.87,-2.5);

\draw (0,0) node {$1$};
\draw (1.74,0) node {$2$};
\draw (2.61,-1.4) node {$\vdots$};
\draw (1.74,-3) node {$n$};
\end{tikzpicture}

\vspace{.4cm}

\begin{tikzpicture}[scale=.4]
\draw (-2.5,0) node {$H^4_n$};
\draw (.87,-.5) circle (2pt) [fill=black];
\draw (.87,.5) circle (2pt) [fill=black];
\draw (-.87,.5) circle (2pt) [fill=black];
\draw (-.87,-.5) circle (2pt) [fill=black];
\draw (0,1) circle (2pt) [fill=black];
\draw (0,-1) circle (2pt) [fill=black];
\draw (0,1)--(.87,.5);
\draw (.87,-.5)--(.87,.5);
\draw (.87,-.5)--(0,-1);
\draw (-.87,-.5)--(0,-1);
\draw (-.87,-.5)--(-.87,.5);
\draw (0,1)--(-.87,.5);

\draw (2.61,.5) circle (2pt) [fill=black];
\draw (2.61,-.5) circle (2pt) [fill=black];
\draw (1.74,1) circle (2pt) [fill=black];
\draw (1.74,-1) circle (2pt) [fill=black];
\draw (2.61,-.5)--(1.74,-1);
\draw (2.61,-.5)--(2.61,.5);
\draw (1.74,1)--(2.61,.5);
\draw (1.74,1)--(.87,.5);
\draw (1.74,-1)--(.87,-.5);

\draw (3.48,-1) circle (2pt) [fill=black];
\draw (3.48,-2) circle (2pt) [fill=black];
\draw (2.61,-2.5) circle (2pt) [fill=black];
\draw (1.74,-2) circle (2pt) [fill=black];
\draw (1.74,-2)--(1.74,-1);
\draw (3.48,-2)--(2.61,-2.5);
\draw (3.48,-2)--(3.48,-1);
\draw (2.61,-.5)--(3.48,-1);
\draw (2.61,-2.5)--(1.74,-2);

\draw (2.61,-3.5) circle (2pt) [fill=black];
\draw (1.74,-4) circle (2pt) [fill=black];
\draw (.87,-2.5) circle (2pt) [fill=black];
\draw (.87,-3.5) circle (2pt) [fill=black];
\draw (.87,-2.5)--(.87,-3.5);
\draw (2.61,-3.5)--(1.74,-4);
\draw (2.61,-3.5)--(2.61,-2.5);
\draw (1.74,-2)--(.87,-2.5);
\draw (1.74,-4)--(.87,-3.5);

\draw (-.87,-3.5) circle (2pt) [fill=black];
\draw (0,-4) circle (2pt) [fill=black];
\draw (.87,-3.5)--(0,-4);
\draw (-.87,-3.5)--(0,-4);

\draw (0,0) node {$1$};
\draw (1.74,0) node {$2$};
\draw (2.61,-1.4) node {$\vdots$};
\draw (1.74,-3) node {$n$};
\end{tikzpicture}
\hspace{.5cm}
\begin{tikzpicture}[scale=.4]
\draw (-2.5,0) node {$H^5_n$};
\draw (.87,-.5) circle (2pt) [fill=black];
\draw (.87,.5) circle (2pt) [fill=black];
\draw (-.87,.5) circle (2pt) [fill=black];
\draw (-.87,-.5) circle (2pt) [fill=black];
\draw (0,1) circle (2pt) [fill=black];
\draw (0,-1) circle (2pt) [fill=black];
\draw (0,1)--(.87,.5);
\draw (.87,-.5)--(.87,.5);
\draw (.87,-.5)--(0,-1);
\draw (-.87,-.5)--(0,-1);
\draw (-.87,-.5)--(-.87,.5);
\draw (0,1)--(-.87,.5);

\draw (2.61,.5) circle (2pt) [fill=black];
\draw (2.61,-.5) circle (2pt) [fill=black];
\draw (1.74,1) circle (2pt) [fill=black];
\draw (1.74,-1) circle (2pt) [fill=black];
\draw (2.61,-.5)--(1.74,-1);
\draw (2.61,-.5)--(2.61,.5);
\draw (1.74,1)--(2.61,.5);
\draw (1.74,1)--(.87,.5);
\draw (1.74,-1)--(.87,-.5);

\draw (3.48,-1) circle (2pt) [fill=black];
\draw (3.48,-2) circle (2pt) [fill=black];
\draw (2.61,-2.5) circle (2pt) [fill=black];
\draw (1.74,-2) circle (2pt) [fill=black];
\draw (1.74,-2)--(1.74,-1);
\draw (3.48,-2)--(2.61,-2.5);
\draw (3.48,-2)--(3.48,-1);
\draw (2.61,-.5)--(3.48,-1);
\draw (2.61,-2.5)--(1.74,-2);

\draw (2.61,-3.5) circle (2pt) [fill=black];
\draw (1.74,-4) circle (2pt) [fill=black];
\draw (.87,-2.5) circle (2pt) [fill=black];
\draw (.87,-3.5) circle (2pt) [fill=black];
\draw (.87,-2.5)--(.87,-3.5);
\draw (2.61,-3.5)--(1.74,-4);
\draw (2.61,-3.5)--(2.61,-2.5);
\draw (1.74,-2)--(.87,-2.5);
\draw (1.74,-4)--(.87,-3.5);

\draw (0,-4) circle (2pt) [fill=black];
\draw (.87,-3.5)--(0,-4);

\draw (0,0) node {$1$};
\draw (1.74,0) node {$2$};
\draw (2.61,-1.4) node {$\vdots$};
\draw (1.74,-3) node {$n$};
\end{tikzpicture}

\end{center}
\caption{Auxiliary graphs for $H_n$.} \label{auxheli}
\end{figure}

\begin{lemma} Let $z_n$ be the number of maximal matchings in $Z_n$ and $z^i_n$ be the number of maximal matchings in the auxiliary graph $Z^i_n$ in Figure \ref{auxzigzag}. Then\\

$(i)$ $h _n = h_{n-1} + h^1_{n-1} + h^2_{n-2} + h^3_{n-2}$, \\

$(ii)$ $h ^1 _n = 2h^4_{n-1} + h^5_{n-1} + h^3_{n-2} + 2h^4_{n-2} + h^5_{n-2}$, \\

$(iii)$ $h ^2 _n = h^3_{n-1} + 2h^4_{n-1} + 2h^4_{n-2} + 2h^3_{n-2} + h^5_{n-2}$, \\

$(iv)$ $h ^3 _n = h^5_n + h_n$,\\

$(v)$ $h ^4 _n = h_n + h^2_{n-1}$,\\

$(vi)$ $h ^5 _n = h^2_{n-1} + h^4_{n-1} + h^1_{n-1}$,\\

\noindent with the initial conditions $h_0=1$, $h_1=5$, $h^1_0=2$, $h^1_1=9$, $h^2_0=3$, $h^2_1=11$, $h^3_0=3$, $h^4_0=2$, and $h^5_0=2$.
\end{lemma}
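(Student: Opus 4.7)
The plan is to prove each of the six recurrences by classifying maximal matchings according to their behaviour on a small set of distinguished edges incident to the last hexagon $h_n$ of $H_n$ (or of the appropriate auxiliary graph $H^i_n$). The engine of every argument is the following observation: a matching $M$ is maximal precisely when every edge of $G$ has at least one endpoint saturated by $M$, so for any chosen vertex $v$ one has exactly two possibilities, namely either (a) some edge $vw$ belongs to $M$, or (b) $v$ is unmatched, in which case \emph{every} neighbour of $v$ must be matched by another edge of $M$. One picks a convenient vertex, enumerates the cases it produces, and matches each residual subgraph (after deleting the saturated vertices and their incident edges as in the convention $G\setminus X$) against one of the six templates of Figure \ref{auxheli}. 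The initial conditions $h_0, h_1, h^i_0, h^1_1, h^2_1$ are obtained by direct enumeration on the small graphs.

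For (i), I would fix a degree-$2$ vertex $v$ on the terminal hexagon $h_n$ that is not shared with $h_{n-1}$, and split the maximal matchings of $H_n$ according to which of the two edges at $v$ lies in $M$, together with the exceptional case in which $v$ is exposed (which forces both neighbours of $v$ to be saturated by other edges of the matching and thus propagates constraints into $h_{n-1}$). The four resulting residual graphs should then be identified, in order, with $H_{n-1}$, $H^1_{n-1}$, $H^2_{n-2}$ and $H^3_{n-2}$, producing the claimed four-term recurrence. Recurrences (ii) and (iii) are handled by the same method applied to vertices on the pendant attachments of $H^1_n$ and $H^2_n$ below hexagon $h_n$; the richer case structure there accounts for the six-term right-hand sides.

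The remaining three identities are the easiest. Identity (iv) comes from splitting the matchings of $H^3_n$ on whether the distinguishing pendant edge at the bottom of $h_n$ belongs to $M$: if it does, removing its endpoints leaves $H_n$ (contributing $h_n$); if it does not, maximality forces its two neighbouring vertices to be covered by other edges and the residual graph is $H^5_n$ (contributing $h^5_n$). Identities (v) and (vi) follow from the analogous single-edge case splits on the pendant edges that distinguish $H^4_n$ from $H_n$ and $H^5_n$ from its antecedents, respectively.

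The main obstacle is the bookkeeping in (ii) and (iii). Because the helicene coils back on itself, the pendant attachments of $H^1_n$ and $H^2_n$ sit close to the edge shared by $h_{n-1}$ and $h_n$, and the maximality condition couples vertices across two consecutive hexagons; each branch of the case split must therefore be traced carefully into $h_{n-1}$ before the residual graph can be recognised as one of the templates, and some branches collapse or coincide in non-obvious ways. This is a mechanical but error-prone exercise, consistent with the authors' remark that most of the computational details are omitted.
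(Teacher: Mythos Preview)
Your proposal is correct and follows essentially the same approach as the paper, which states only that ``these recursions can be verified via casework'' and omits all details; the case-split on a distinguished vertex/edge using the maximality criterion is exactly the intended method. One small slip: in your sketch of (iv) the two cases are swapped --- if the leaf edge of the pendant $P_3$ in $H^3_n$ lies in $M$, deleting its endpoints leaves $H^5_n$ (not $H_n$), while if it does not, maximality forces the next edge into $M$ and the residual is $H_n$; the recurrence is unaffected, of course.
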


\begin{theorem} Let $L(x)$, $Z(x)$, and $H(x)$ be the ordinary generating functions for the sequences $\ell_n$, $z_n$, and $h_n$, respectively. Then\\

$(i)$ 
$$
L(x) = \frac{1 + x - x^3}{1 - 4x - x^4 - x^5},
$$
$(ii)$
$$
Z(x) = \frac{1 + 2x + 4x^2 + 4x^3 + 6x^4 + 4x^5 + x^6}{1 - 3x - x^2 - 6x^3 - 7x^4 - 7x^5 - 5x^6 - x^7},
$$
$(iii)$
$$
H(x) = \frac{1 + 4x + 8x^2 + 8x^3 + 7x^4 + 4x^5 + 2x^6}{1 - x - 7x^2 - 12x^3 - 6x^4 - 7x^5 - 4x^6 - 2x^7}.
$$
\end{theorem}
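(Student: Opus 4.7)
The plan is to derive each of the three generating functions in the same way: introduce one generating function for the main sequence and one for each auxiliary sequence, translate the recurrences of the preceding three lemmas into a linear system in these unknown series, and then solve the system for the relevant component. Concretely, for the polyacene case set
$$
L(x)=\sum_{n\ge 0}\ell_n x^n,\qquad L^i(x)=\sum_{n\ge 0}\ell^i_n x^n\quad(i=1,2,3),
$$
and similarly $Z(x),Z^i(x)$ for $i=1,\dots,5$ and $H(x),H^i(x)$ for $i=1,\dots,5$. The theorem statement is then just the result of eliminating all the auxiliary series.

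For $L(x)$ the recurrence $\ell_n=\ell^1_{n-1}+\ell_{n-1}+2\ell^2_{n-2}$ (valid for $n\ge 2$, with $\ell_0=1$, $\ell_1=5$ used to fix the low-order coefficients) translates to
$$
L(x)=1+5x+x\bigl(L^1(x)-\ell^1_0\bigr)/\text{shift corrections}+xL(x)+2x^2L^2(x),
$$
and analogously for the other three recurrences. The cleanest way I would implement this is: multiply the $n$th recurrence by $x^n$, sum for $n$ starting at the smallest value for which the recurrence holds, and then add back the initial terms (e.g., $\ell_0+\ell_1 x$, $\ell^3_0+\ell^3_1 x$, etc.) so each equation reads as a linear identity among $L,L^1,L^2,L^3$ with polynomial coefficients in $x$. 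This yields a $4\times 4$ linear system whose coefficient matrix has entries in $\mathbb{Z}[x]$; solving for $L(x)$ by Cramer's rule gives a rational function, which after simplification must match $(1+x-x^3)/(1-4x-x^4-x^5)$. The analogous procedures for $Z$ and $H$ give $6\times 6$ systems.

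The step I expect to be the real bottleneck is algebraic rather than conceptual: the $Z$ and $H$ systems each couple six generating functions through a relatively dense coefficient matrix, and solving them symbolically by hand is error-prone. I would therefore carry this out with a computer algebra system, verifying the output by expanding the purported $L(x),Z(x),H(x)$ as a power series and matching the first several coefficients against the sequences produced by iterating the lemma recurrences directly. A second subtle point is bookkeeping of the initial conditions: several auxiliary recurrences involve shifts by $2$ (for instance $\ell^1_{n-2},\ell^2_{n-2},\ell^3_{n-2}$ in part (iv) of the $L$-lemma, and $z^3_{n-2},z^4_{n-2}$ for $Z$), so the corresponding generating-function equations pick up polynomial correction terms from $\ell^i_0,\ell^i_1,z^i_0,z^i_1$, etc., and these must be tracked to get the stated numerators.

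Finally, once the three rational expressions are in hand, the ``initial conditions can be verified by direct computations'' remark, together with the elementary fact that a rational function with denominator of degree $d$ encodes a linear recurrence of order $d$, gives as a corollary (analogous to the one already proved for the polyspiro chains) the explicit linear recurrences satisfied by $\ell_n,z_n,h_n$; this is used only implicitly in the present theorem, so no additional argument is needed beyond solving the systems.
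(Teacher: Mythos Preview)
Your proposal is correct and follows exactly the approach the paper uses: it explicitly states that the generating functions are obtained ``using the same techniques outlined in subsection~\ref{seccactigen},'' namely introducing generating functions for the main and auxiliary sequences, converting the lemma recurrences into a linear system, and solving for the desired component, with the computational details omitted. Your write-up is in fact more detailed than the paper's, which simply asserts the result after stating the auxiliary recurrences.
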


\noindent Since $L(x)$, $Z(x)$, and $H(x)$ are rational functions, we can examine their denominators to obtain linear recurrences for the sequences $\ell _n$, $z_n$, and $h_n$. The initial conditions can be verified by direct computations.

\begin{corollary} \ \\

$(i)$ $\ell_n = 4\ell _{n-1} + \ell _{n-4} + \ell _{n-5}$ \\

\noindent with initial conditions $\ell _0 = 1$, $\ell _1 = 5$, $\ell _2 = 20$, $\ell _3 = 79$, and $\ell _4 = 317$,\\

$(ii)$ $z_n = 3z_{n-1} + z_{n-2} + 6z_{n-3} +7z_{n-4} + 7z_{n-5} + 5z_{n-6} + z_{n-7}$ \\

\noindent with initial conditions $z _0 = 1$, $z _1 = 5$, $z _2 = 20$, $z _3 = 75$, $z _4 = 288$, $z _5 = 1105$, and $z _6 = 4234$,\\

$(iii)$ $h_n = h_{n-1} + 7h_{n-2} + 12h_{n-3} + 6h_{n-4} + 7h_{n-5} + 4h_{n-6} + 2h_{n-7}$\\

\noindent with initial conditions $h _0 = 1$, $h _1 = 5$, $h _2 = 20$, $h _3 = 75$, $h _4 = 288$, $h _5 = 1094$, and $h _6 = 4171$.
\end{corollary}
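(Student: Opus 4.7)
The plan is to read the recurrences directly off the denominators of the rational generating functions supplied by the previous theorem, and then to pin down the initial conditions by explicit computation.

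For part $(i)$, write $L(x) = N(x)/D(x)$ where $N(x) = 1 + x - x^3$ and $D(x) = 1 - 4x - x^4 - x^5$. Multiplying the defining identity $D(x)\sum_{n\ge 0}\ell_n x^n = N(x)$ and comparing coefficients of $x^n$ on both sides gives, for every $n \ge \deg N + 1 = 4$, the relation
\[
\ell_n - 4\ell_{n-1} - \ell_{n-4} - \ell_{n-5} = 0,
\]
which is exactly the stated recurrence. The same mechanical step applied to $Z(x)$ and $H(x)$ yields the order-seven recurrences in $(ii)$ and $(iii)$, which are valid for $n \ge 7$; here the degree bookkeeping between numerator and denominator is the only thing to keep straight.

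For the initial conditions I would proceed in two complementary ways and cross-check them. First, the power-series expansion of the rational function $N(x)/D(x)$ through order $n = 4$ (respectively $n = 6$) produces the listed integers $1,5,20,79,317$ for $\ell_n$, $1,5,20,75,288,1105,4234$ for $z_n$, and $1,5,20,75,288,1094,4171$ for $h_n$; this is a routine calculation using the same identity $D(x)\sum a_n x^n = N(x)$ solved forward one coefficient at a time. Second, each of these small values can be verified by direct enumeration of maximal matchings in the corresponding small benzenoid chain: $L_0 = Z_0 = H_0$ is a single hexagon with $5$ maximal matchings, and so on. The two methods must agree, and agreement of the closed form with the first few directly enumerated terms pins everything down.

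The one genuine subtlety, rather than an obstacle, is that the recurrences' order is determined by $\deg D$ while the number of initial conditions needed is $\max(\deg D, \deg N + 1)$; in $(i)$ and $(iii)$ these coincide, but one should list enough starting values that the coefficient identities for low $n$ (where the numerator contributes) are absorbed into the stated initial data. Since the theorem above already established the generating functions from the system of auxiliary recurrences, no further graph-theoretic argument is needed; the corollary is a formal consequence of rationality together with a finite verification.
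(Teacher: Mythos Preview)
Your approach is exactly the paper's: read the recurrence off the denominator of the rational generating function and verify the initial terms by direct computation. One small slip to fix: in your verification aside you write that $L_0=Z_0=H_0$ is a single hexagon with $5$ maximal matchings, but in fact $n=0$ corresponds to the empty graph (one maximal matching, the empty one) and it is $L_1=Z_1=H_1$, the single hexagon, that has $5$; relatedly, the recurrence in $(i)$ first applies cleanly at $n\ge \deg D=5$, not $n\ge 4$, which is why five initial values are listed --- a point you in fact get right in your later remark about $\max(\deg D,\deg N+1)$.
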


Again we can use Darboux's Theorem to deduce the asymptotics of the sequences $\ell_n$, $z_n$, and $h_n$. The smallest modulus singularity of $L(x)$ is approximately $x=0.248804$. Hence, the asymptotic behavior of $\ell _n$ is given by $\ell_n \sim 4.01923^{n+1}$ for large $n$. Similarly, we deduce that $z_n \sim 3.83256^{n+1}$ and $h_n \sim 3.81063^{n+1}$ for large $n$.

\subsection{Extremal structure}

In this subsection, we prove the linear polyacene has most maximal matchings among all benzenoid chains of the same length.

\begin{theorem}\label{extremebenz}
Let $G_n$ be a benzenoid chain of length $n$. Then
$$
\Psi (G_n) \le \Psi (L_n).
$$
\end{theorem}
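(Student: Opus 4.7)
The plan is to adapt the strategy used in the proof of Theorem \ref{extremecacti} to benzenoid chains. The argument proceeds by induction on $n$: at the inductive step, I would take an arbitrary benzenoid chain $C$ of length $n-1$, consider the two essentially distinct ways of attaching an $n$-th hexagon $h_n$ to the terminal hexagon $h_{n-1}$ (in the straight position, giving $CS$, or in a kinky position, giving $CK$), and show that $\Psi(CS) \ge \Psi(CK)$. By symmetry, the two kinky attachments are equivalent, so only one needs to be analyzed. This local comparison, combined with the inductive hypothesis applied to the prefix, yields $\Psi(G_n) \le \Psi(L_n)$.

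The first step is a benzenoid analog of Lemma \ref{cactiedges}: label the six vertices of $h_{n-1}$ and classify every maximal matching according to which edges of $h_{n-1}$ (or incident to the junction $h_{n-2}h_{n-1}$) it contains, arguing that the matching must either contain one of a short list of designated edges or saturate the remaining vertices of $h_{n-1}$ through edges outside the hexagon. With this classification in hand, I would express
\[
\Psi(CS) = \sum_i \alpha_i \, \Psi(H_{n-2}^{(i)}), \qquad \Psi(CK) = \sum_i \beta_i \, \Psi(H_{n-2}^{(i)}),
\]
where the $H_{n-2}^{(i)}$ are specific vertex-deleted subgraphs of the prefix $H_{n-2}$ obtained from $C$ by removing $h_{n-1}$, and the small integer coefficients $\alpha_i, \beta_i$ come from direct enumeration of maximal matchings on the two-hexagon piece $h_{n-1} \cup h_n$. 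To conclude $\Psi(CS) \ge \Psi(CK)$, I would compare coefficients term by term and, whenever $\alpha_i < \beta_i$, dominate the deficit by the surplus on a larger subgraph via Lemma \ref{subgraph}, exactly as in the treatment of $\Psi(CM)$ versus $\Psi(CP)$ in Theorem \ref{extremecacti}.

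The hard part will be the coefficient enumeration, which is noticeably more delicate than in the cactus setting. Because adjacent hexagons in a benzenoid chain share an entire edge rather than a single cut vertex, a maximal matching can saturate the two shared endpoints through several disjoint mechanisms, and the classification of behaviors near the junction produces a longer list of auxiliary subgraphs $H_{n-2}^{(i)}$. A second subtlety is that lifting the local comparison to the global inequality $\Psi(G_n) \le \Psi(L_n)$ requires knowing that $L_{n-1}$ simultaneously maximizes each of the subgraph counts $\Psi(H_{n-2}^{(i)})$; this is best handled by strengthening the inductive hypothesis to a vector of extremal statements indexed by the auxiliary subgraphs already introduced in the generating-function computations of subsection \ref{secbenzgen}, rather than proving the theorem's inequality in isolation.
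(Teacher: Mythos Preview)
Your plan matches the paper's proof: Lemma~\ref{edges} is exactly the benzenoid analogue of Lemma~\ref{cactiedges} you anticipate, and the paper writes $\Psi(BL)$ and $\Psi(BK)$ as eight-term linear combinations of edge-deleted counts $\Psi(H_{n-2}\setminus X)$, then deduces $\Psi(BL)\ge\Psi(BK)$ via three applications of Lemma~\ref{subgraph}.

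Where you go beyond the paper is your final paragraph. The paper does \emph{not} strengthen the inductive hypothesis to a vector of extremal statements; after establishing the local comparison for an arbitrary prefix $B$, it simply asserts ``the inequality stated in the theorem follows,'' just as in the proof of Theorem~\ref{extremecacti}. Your worry that the local inequality (which only straightens the last interior hexagon) needs an extra argument to propagate to all interior hexagons is legitimate, and a vector-valued induction would certainly close it, but the paper treats this passage as immediate and does not spell it out. One minor correction: the auxiliary quantities you would need to track are the edge-deleted counts $\Psi(H_{n-2}\setminus X)$ arising in the local comparison, which are not the same objects as the auxiliary graphs $L_n^i$ of subsection~\ref{secbenzgen}.
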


Let $G_m$ be an arbitrary benzenoid chain of length $m$. Observe that we can always draw $G_m$ as in Figure \ref{benz}, where $h_m$ is a terminal hexagon and the hexagon adjacent to the left of $h_{m-1}$ may attach at any of the edges $f, g,$ or $h$. Let us assume the hexagons of $G_m$ are labeled $h_1, \ldots, h_m$ according to their ordering in Figure \ref{benz} where ($h_1$ is the other terminal hexagon).

\begin{figure}[h]
\begin{center}
\begin{tikzpicture}[scale=1.5]

\draw (.87,-.5) circle (2pt) [fill=black];
\draw (.87,.5) circle (2pt) [fill=black];
\draw (-.87,.5) circle (2pt) [fill=black];
\draw (-.87,-.5) circle (2pt) [fill=black];
\draw (0,1) circle (2pt) [fill=black];
\draw (0,-1) circle (2pt) [fill=black];
\draw (0,1)--(.87,.5);
\draw (.87,-.5)--(.87,.5);
\draw (.87,-.5)--(0,-1);
\draw (-.87,-.5)--(0,-1);
\draw (-.87,-.5)--(-.87,.5);
\draw (0,1)--(-.87,.5);

\draw (2.61,.5) circle (2pt) [fill=black];
\draw (2.61,-.5) circle (2pt) [fill=black];
\draw (1.74,1) circle (2pt) [fill=black];
\draw (1.74,-1) circle (2pt) [fill=black];
\draw (2.61,-.5)--(1.74,-1);
\draw (2.61,-.5)--(2.61,.5);
\draw (1.74,1)--(2.61,.5);
\draw (1.74,1)--(.87,.5);
\draw (1.74,-1)--(.87,-.5);

\draw (0,0) node {$h_{m-1}$};
\draw (1.74,0) node {$h_{m}$};
\draw (-1.5,0) node {$\cdots$};

\draw (.5,.87) node {$a$};
\draw (1.2,.87) node {$b$};
\draw (.7,0) node {$c$};
\draw (1.2,-.87) node {$d$};
\draw (.5,-.87) node {$e$};
\draw (-.55,-.87) node {$f$};
\draw (-1,0) node {$g$};
\draw (-.5,.87) node {$h$};

\draw (2.2,.87) node {$x$};
\draw (2.75,0) node {$y$};
\draw (2.2,-.87) node {$z$};

\end{tikzpicture}
\end{center}
\caption{A terminal hexagon, $h_m$, and its adjacent hexagon, $h_{m-1}$, in the benzenoid chain $G_m$.} \label{benz}
\end{figure}
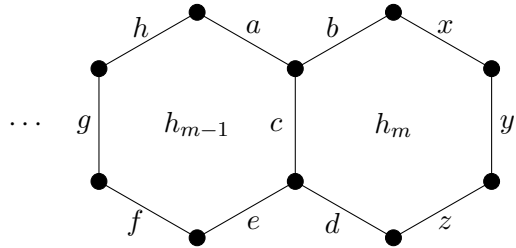

In what follows, let us adopt all of the same notation introduced in section \ref{secextremecacti}. We also make use of Lemma \ref{subgraph} introduced previously, since this holds for arbitrary graphs.

\begin{lemma} \label{edges}
Any maximal matching of $G_m$ must contain at least one of the edges $a, b, c, d$ or $e$. Moreover, any maximal matching of $G_m$ contains exactly one of these edges, or contains exactly one of the following pairs of edges: $a$ and $e$, $a$ and $d$, $b$ and $e$, or $b$ and $d$.
\end{lemma}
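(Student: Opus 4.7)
The plan is a direct case analysis based on two simple structural observations about the terminal hexagon $h_m$. First I would identify the two vertices shared by $h_{m-1}$ and $h_m$: call them $u$ (the upper one, incident to $a$, $b$, and $c$) and $v$ (the lower one, incident to $c$, $d$, and $e$), so that $c = uv$. Since $h_m$ is terminal and $h_{m-2}$ attaches to $h_{m-1}$ along one of the edges $f$, $g$, or $h$ (none of which touches $u$ or $v$), the only edges of $G_m$ incident to $u$ are $a$, $b$, $c$, and the only edges of $G_m$ incident to $v$ are $c$, $d$, $e$.

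For the first claim I would argue by contradiction: if $M$ is a maximal matching with $M \cap \{a,b,c,d,e\} = \emptyset$, then by the observation above both $u$ and $v$ are $M$-unsaturated, and hence $M \cup \{c\}$ is a strictly larger matching, contradicting the maximality of $M$.

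For the ``moreover'' part I would exploit that any matching contains at most one edge at $u$ and at most one at $v$; this immediately forces $|M \cap \{a,b,c\}| \le 1$ and $|M \cap \{c,d,e\}| \le 1$, ruling out all triples as well as the pairs $\{a,b\}$, $\{d,e\}$, $\{a,c\}$, $\{b,c\}$, $\{c,d\}$, $\{c,e\}$. A case split on whether $c \in M$ then finishes things: if $c \in M$ the intersection is exactly $\{c\}$; if $c \notin M$ then $M \cap \{a,b,c,d,e\}$ is a subset of $\{a,b\} \cup \{d,e\}$ meeting each of these two sets in at most one element, which together with the first claim (non-emptiness) yields precisely the singletons $\{a\}$, $\{b\}$, $\{d\}$, $\{e\}$ and the four listed pairs. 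There is no real obstacle; the lemma is essentially bookkeeping once the degrees of $u$ and $v$ are pinned down, and the only point requiring care is justifying that observation about $u$ and $v$ having no other neighbors in $G_m$.
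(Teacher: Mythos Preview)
Your proposal is correct and follows essentially the same approach as the paper: the paper proves the first claim by the identical contradiction (if none of $a,b,c,d,e$ lie in $M$ then $c$ can be added), and for the ``moreover'' part simply remarks that it ``follows by considering which pairs of edges can belong to the same matching,'' which is precisely the bookkeeping you spell out explicitly via the incidences at $u$ and $v$. Your version is more detailed than the paper's terse sketch, but the underlying argument is the same.
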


\begin{proof}
Take a maximal matching $M$. For sake of contradiction, suppose $M$ contains none of the edges $a, b, c, d$ or $e$. Then we could add the edge $c$ to $M$, which is a contradiction to $M$ being a maximal matching. Hence at least one of the edges $a, b, c, d$ or $e$. The remaining part of the lemma follows by considering which pairs of edges can belong to the same matching.
\end{proof}

\begin{proof}[Proof. (of theorem \ref{extremebenz})]
Take a benzenoid chain $B$ of length $n-1$. Let us set $m=n-1$ and suppose that $B$ is drawn as in Figure \ref{benz} with edges labeled as such, so that we may refer to this picture to aid this proof. We consider two cases of extending $B$ by an $n$th hexagon $h_n$.

\begin{figure}[h]
\begin{center}
\begin{tikzpicture}[scale=1.5]

\draw (.87,-.5) circle (2pt) [fill=black];
\draw (.87,.5) circle (2pt) [fill=black];
\draw (-.87,.5) circle (2pt) [fill=black];
\draw (-.87,-.5) circle (2pt) [fill=black];
\draw (0,1) circle (2pt) [fill=black];
\draw (0,-1) circle (2pt) [fill=black];
\draw (0,1)--(.87,.5);
\draw (.87,-.5)--(.87,.5);
\draw (.87,-.5)--(0,-1);
\draw (-.87,-.5)--(0,-1);
\draw (-.87,-.5)--(-.87,.5);
\draw (0,1)--(-.87,.5);

\draw (2.61,.5) circle (2pt) [fill=black];
\draw (2.61,-.5) circle (2pt) [fill=black];
\draw (1.74,1) circle (2pt) [fill=black];
\draw (1.74,-1) circle (2pt) [fill=black];
\draw (2.61,-.5)--(1.74,-1);
\draw (2.61,-.5)--(2.61,.5);
\draw (1.74,1)--(2.61,.5);
\draw (1.74,1)--(.87,.5);
\draw (1.74,-1)--(.87,-.5);

\draw (0,0) node {$h_{n-2}$};
\draw (1.74,0) node {$h_{n-1}$};
\draw (3.48,0) node {$h_{n}$};
\draw (-1.5,0) node {$\cdots$};

\draw (.5,.87) node {$a$};
\draw (1.2,.87) node {$b$};
\draw (.7,0) node {$c$};
\draw (1.2,-.87) node {$d$};
\draw (.5,-.87) node {$e$};
\draw (-.55,-.87) node {$f$};
\draw (-1,0) node {$g$};
\draw (-.5,.87) node {$h$};

\draw (2.2,.87) node {$x$};
\draw (2.75,0) node {$y$};
\draw (2.2,-.87) node {$z$};

\draw (4.35,.5) circle (2pt) [fill=black];
\draw (4.35,-.5) circle (2pt) [fill=black];
\draw (3.48,1) circle (2pt) [fill=black];
\draw (3.48,-1) circle (2pt) [fill=black];
\draw (4.35,-.5)--(3.48,-1);
\draw (4.35,-.5)--(4.35,.5);
\draw (3.48,1)--(4.35,.5);
\draw (3.48,1)--(2.61,.5);
\draw (3.48,-1)--(2.61,-.5);

\end{tikzpicture}
\end{center}
\caption{The benzenoid chain BL.} \label{BL}
\end{figure}
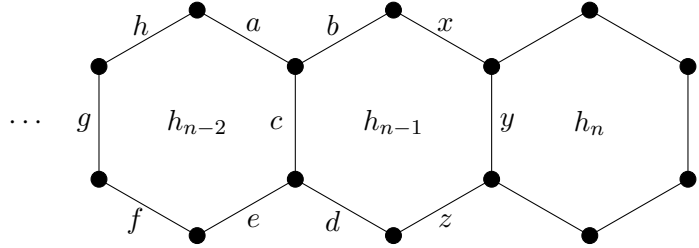

{\bf Case 1.} The hexagon $h_n$ attaches in the linear position to the edge $y$ and let us denote the resulting graph by $BL$, see Figure \ref{BL}. To compute $\Psi (BL)$ we make use of Lemma \ref{edges} and count matchings based on which of the edges $a, b, c, d, e$ are saturated. Of the possibilities in Lemma \ref{edges}, consider the maximal matchings of $BL$ containing only the edge $a$. Such a matching must also contain the edges $f$ and $z$, else this matching would contain one of the other edges $d$ or $e$. The remaining edges of the matching must be a maximal matching of $H_{n-2}\setminus \{a,f\}$ and a maximal matching of $H_{n-1,n}\setminus z$. By directly counting, we find that $\Psi (H_{n-1,n}\setminus z) = 4$ and hence, the number of maximal matchings containing only the edge $a$ is $4\cdot \Psi(H_{n-2}\setminus \{a,f\})$. We count the remaining cases from Lemma \ref{edges} similarly. We note that a $H_{n-1}\setminus c$ is used to count maximal matchings containing the edges $b$ or $d$, since these edges do not belong to the subgraph $H_{n-2}$. For example, the number of maximal matchings containing only the edge $b$ is $3\cdot \Psi(H_{n-2}\setminus \{c,f\})$. Thus 
\begin{align*}
\Psi (BL) =\ &4\cdot \Psi(H_{n-2}\setminus \{a,f\}) + 3\cdot \Psi(H_{n-2}\setminus \{c,f\})+14\cdot \Psi(H_{n-2}\setminus c) \\
&+4\cdot \Psi(H_{n-2}\setminus \{e,h\})+3\cdot \Psi(H_{n-2}\setminus \{c,h\})+9\cdot \Psi(H_{n-2}\setminus \{a,e\})\\
&+7\cdot \Psi(H_{n-2}\setminus \{a,c\})+7\cdot \Psi(H_{n-2}\setminus\{c,e\}).
\end{align*}

\begin{figure}[h]
\begin{center}
\begin{tikzpicture}[scale=1.5]

\draw (.87,-.5) circle (2pt) [fill=black];
\draw (.87,.5) circle (2pt) [fill=black];
\draw (-.87,.5) circle (2pt) [fill=black];
\draw (-.87,-.5) circle (2pt) [fill=black];
\draw (0,1) circle (2pt) [fill=black];
\draw (0,-1) circle (2pt) [fill=black];
\draw (0,1)--(.87,.5);
\draw (.87,-.5)--(.87,.5);
\draw (.87,-.5)--(0,-1);
\draw (-.87,-.5)--(0,-1);
\draw (-.87,-.5)--(-.87,.5);
\draw (0,1)--(-.87,.5);

\draw (2.61,.5) circle (2pt) [fill=black];
\draw (2.61,-.5) circle (2pt) [fill=black];
\draw (1.74,1) circle (2pt) [fill=black];
\draw (1.74,-1) circle (2pt) [fill=black];
\draw (2.61,-.5)--(1.74,-1);
\draw (2.61,-.5)--(2.61,.5);
\draw (1.74,1)--(2.61,.5);
\draw (1.74,1)--(.87,.5);
\draw (1.74,-1)--(.87,-.5);

\draw (0,0) node {$h_{n-2}$};
\draw (1.74,0) node {$h_{n-1}$};
\draw (2.61,-1.5) node {$h_{n}$};
\draw (-1.5,0) node {$\cdots$};

\draw (.5,.87) node {$a$};
\draw (1.2,.87) node {$b$};
\draw (.7,0) node {$c$};
\draw (1.2,-.87) node {$d$};
\draw (.5,-.87) node {$e$};
\draw (-.55,-.87) node {$f$};
\draw (-1,0) node {$g$};
\draw (-.5,.87) node {$h$};

\draw (2.2,.87) node {$x$};
\draw (2.75,0) node {$y$};
\draw (2.2,-.87) node {$z$};

\draw (3.48,-1) circle (2pt) [fill=black];
\draw (3.48,-2) circle (2pt) [fill=black];
\draw (2.61,-.5) circle (2pt) [fill=black];
\draw (2.61,-2.5) circle (2pt) [fill=black];
\draw (1.74,-2) circle (2pt) [fill=black];
\draw (3.48,-2)--(2.61,-2.5);
\draw (3.48,-2)--(3.48,-1);
\draw (2.61,-.5)--(3.48,-1);
\draw (1.74,-2)--(1.74,-1);
\draw (2.61,-2.5)--(1.74,-2);

\end{tikzpicture}
\end{center}
\caption{The benzenoid chain BK.} \label{BK}
\end{figure}
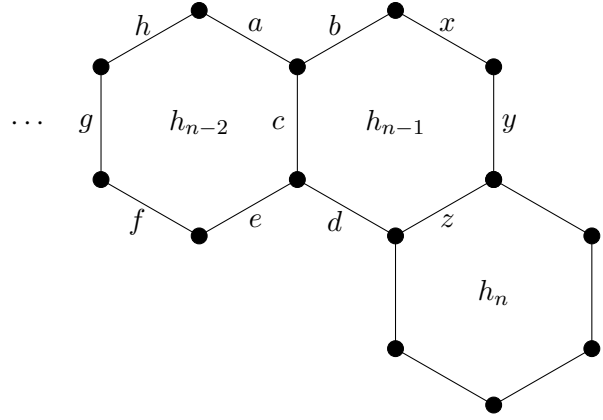

{\bf Case 2.} The hexagon $h_n$ attaches in the kinky position to the edge $z$ and let us denote the resulting graph by $BK$, see Figure \ref{BK}. Counting as in Case 1 above we obtain
\begin{align*}
\Psi (BK) =\ &6\cdot \Psi(H_{n-2}\setminus \{a,f\}) + 5\cdot \Psi(H_{n-2}\setminus \{c,f\})+12\cdot \Psi(H_{n-2}\setminus c) \\
&+5\cdot \Psi(H_{n-2}\setminus \{e,h\})+3\cdot \Psi(H_{n-2}\setminus \{c,h\})+8\cdot \Psi(H_{n-2}\setminus \{a,e\})\\
&+5\cdot \Psi(H_{n-2}\setminus \{a,c\})+7\cdot \Psi(H_{n-2}\setminus\{c,e\}).
\end{align*}

Now considering the terms in $\Psi (BL)$, by Lemma \ref{subgraph} we have 
\begin{align*}
\Psi(H_{n-2}\setminus \{a,c\}) &\ge \Psi(H_{n-2}\setminus \{a,f\}), \\
\Psi(H_{n-2}\setminus \{c\}) &\ge \Psi(H_{n-2}\setminus \{c,f\}), and\\
\Psi(H_{n-2}\setminus \{a,e\}) &\ge \Psi(H_{n-2}\setminus \{e,h\}),
\end{align*} 
implying that
\begin{align*}
\Psi (BL) \ge &6\cdot \Psi(H_{n-2}\setminus \{a,f\}) + 5\cdot \Psi(H_{n-2}\setminus \{c,f\})+12\cdot \Psi(H_{n-2}\setminus c) \\
&+5\cdot \Psi(H_{n-2}\setminus \{e,h\})+3\cdot \Psi(H_{n-2}\setminus \{c,h\})+8\cdot \Psi(H_{n-2}\setminus \{a,e\})\\
&+5\cdot \Psi(H_{n-2}\setminus \{a,c\})+7\cdot \Psi(H_{n-2}\setminus\{c,e\}) \\
&\ge \Psi (BK).
\end{align*}
The above proves that attaching a hexagon linearly gives more maximal matchings than attaching a hexagon in the kinky position. The inequality stated in the theorem follows.
\end{proof}

Again, we can see that the number of maximal matchings follows the same 
pattern as the number of independent sets, contrary to the number of all and
of perfect matchings. While the last two increase with the number of kinky
hexagons, the number of maximal matchings decreases. Further, unlike the
number of perfect matchings which does not discriminate between left and right
kinks, the number of maximal matchings seems to be sensitive to the
direction of successive turns. It seems that the helicenes have the smallest
number of maximal matchings among all benzenoid chains of the same length.

\section{Further developments}

In this last section we list some unresolved problems and indicate some
possible directions of future research. We start by stating a conjecture
about the extremal benzenoid chains.

\begin{conjecture} \label{conj1}
Let $B_n$ be a benzenoid chain of length $n$. Then $\Psi (H_n) \leq 
\Psi (B_n)$.
\end{conjecture}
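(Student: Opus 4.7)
The plan is to mirror and extend the local-substitution approach of Theorem 4.8 via a two-stage reduction: first reduce any chain to a fibonacene, then show the helicene is minimal among fibonacenes. The target is to deform any $B_n$ into $H_n$ by a sequence of local modifications that each weakly decrease $\Psi$.

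Stage one is to promote the terminal linear-versus-kinky comparison of Theorem 4.8 to an interior one. Let $h_k$ be a straight interior hexagon in $B_n$. Applying the Lemma 4.9 edge-casework on $h_k$ while additionally conditioning on the matching state at the $(k,k+1)$ interface, I would express $\Psi(B_n)$ and $\Psi(B_n')$ (where $B_n'$ is obtained from $B_n$ by replacing $h_k$ with a kinky hexagon) as linear combinations of $\Psi(H_{k-1} \setminus S)$ times coefficients determined by the (identical) suffix $h_{k+1},\ldots,h_n$. Lemma 3.6 should then give coefficient-wise dominance $\Psi(B_n) \ge \Psi(B_n')$. Iterating this substitution eliminates all straight interior hexagons, reducing to the fibonacene case.

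Stage two is the comparison of the helicene with other fibonacenes. I would induct on the number of turn reversals along the chain. For a fibonacene $F$ with a reversal at position $k$, let $F'$ be the fibonacene obtained by flipping the turn at $k$ to match the preceding direction. I would show $\Psi(F') \le \Psi(F)$ by a local substitution on the four-hexagon window $h_{k-1}, h_k, h_{k+1}, h_{k+2}$, conditioned on the matching states at both window interfaces. With these interface states held fixed, the comparison reduces to a finite enumeration of maximal matchings of two small subgraphs.

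The principal obstacle is stage two. The helicene-versus-zig-zag gap is delicate: numerically $h_5 = 1094$ and $z_5 = 1105$ differ by only $11$, whereas the linear-versus-kinky gap of Theorem 4.8 is large; this suggests the local inequality is tight on most coefficients and possibly reversed on some, so a naive term-by-term comparison via Lemma 3.6 is unlikely to suffice. I would instead try to construct an explicit injection from the maximal matchings of $F'$ into those of $F$, organized via a transfer matrix on interface states so that the monotonicity claim reduces to a Perron--Frobenius comparison of two small matrices (consistent with the observed asymptotic inequality $3.81063 < 3.83256$ from the previous subsection). If such a clean comparison fails, one fallback is to verify the base cases $n \le 6$ directly from the generating functions and then induct on $n \ge 7$ using the order-seven recurrences derived in Corollary 4.11, exploiting the fact that any fibonacene of length $n$ has a proper prefix to which the inductive hypothesis applies.
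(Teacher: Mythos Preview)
The paper does not prove this statement: it is stated as Conjecture~5.1 in the final section and left open. There is therefore no proof in the paper to compare your proposal against; what follows is an assessment of your plan as an attack on an open problem.

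The two-stage architecture is sensible and consistent with the paper's evidence (the asymptotics $3.81063 < 3.83256 < 4.01923$ and the explicit values through $n=6$). However, there are two concrete gaps. First, in Stage~1 you implicitly assume that replacing a straight interior hexagon by a kinky one can be done without specifying the kink direction, but there are two choices, and you need that at least one of them does not increase $\Psi$; moreover, the interior analogue of the Theorem~4.8 inequality requires conditioning on \emph{both} interfaces simultaneously, which produces a larger table of coefficients than the terminal case, and you have not checked that Lemma~3.6 suffices term-by-term (in the terminal case the paper already needed three separate applications of Lemma~3.6 to close the comparison). Second, your Stage~2 fallback is not sound as written: the order-seven recurrences of Corollary~4.11 hold only for the three \emph{uniform} families $L_n$, $Z_n$, $H_n$, not for an arbitrary fibonacene $F$ of length $n$, so ``induct on $n\ge 7$ using the order-seven recurrences'' does not apply to the quantity $\Psi(F)$ you need to bound.

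Your transfer-matrix suggestion is the most promising route: encoding interface states and reducing the flip $F \mapsto F'$ to an entrywise comparison of products of small nonnegative matrices would, if it works, handle both stages uniformly. The honest acknowledgment that the $h_5=1094$ versus $z_5=1105$ gap is tiny and that naive coefficient dominance may fail is well taken; that is precisely why the paper leaves this as a conjecture.
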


Now we turn to some structural properties. The cardinality of any smallest
maximal matching in $G$ is called the {\em saturation number} of $G$. The
saturation number is of interest in the context of random sequential 
adsorption, since it gives the information on the worst possible case of
clogging the substrate; see \cite{doszub2} for a discussion and 
\cite{andova,doszub1,doslicsat} for some specific cases. However, it is not
enough to know the size of the worst possible case; it is also imprtant to
know how (un)likely is it to happen. This brings us back to enumerative 
problems, since the answer to this question depends on the ability to count
maximal matchings of a given size. A neat way to handle information about
maximal matchings of different sizes is to use the {\em maximal matching
polynomial}. It was introduced in \cite{doszub2} and some of its basic
properties were established there. There are, however, many open questions
about this polynomial. For example, for ordinary (generating) matching 
polynomials \cite{farrell79,lovasz} we know that their coefficients are 
log-concave. Is this valid also for maximal matching polynomials? We have
computed maximal matching polynomials explicitly for several families of
graphs, and we have enumerated maximal matchings in several other families.
So far, no counterexample has been found, but the proof still eludes us.

Another interesting thing to do would be to look at the dynamic aspect of
the problem, emulating the approach of Flory \cite{flory}.

Finally, it would be interesting to extend our results on other classes of
graphs, such as rotagraphs, branching polymers, composite graphs and
finite portions of various lattices.

\section*{Acknowledgements}

This work was partially supported by a SPARC Graduate Research Grant from the 
Office of the Vice President for Research at the University of South Carolina 
and also supported in part by the NSF DMS under contract 1300547. It was
supported in part by Croatian Science Foundation under
the project 8481 (BioAmpMode).


\end{document}